\title{Algebraic properties of the ring $C(X)_\mathcal{P}$}
\theoremstyle{plain}
\newtheorem{theorem}{Theorem}[section]
\newtheorem{lemma}[theorem]{Lemma}
\newtheorem{proposition}[theorem]{Proposition}
\theoremstyle{definition}
\newtheorem{definition}[theorem]{Definition}
\theoremstyle{definitions}
\newtheorem{definitions}[theorem]{Definitions}
\newtheorem{remark}[theorem]{Remark}
\newtheorem{counter example}[theorem]{Counter Example}
\newtheorem{notation}[theorem]{Notation}
\newtheorem{corollary}[theorem]{Corollary}
\newtheorem{observation}[theorem]{observation}
\newtheorem{example}[theorem]{Example}
\numberwithin{equation}{section}
\author[A. Dey]{Amrita Dey}	\address{Department of Pure Mathematics, University of Calcutta, 35, Ballygunge Circular Road, Kolkata 700019, West Bengal, India}	\email{deyamrita0123@gmail.com}
\author[S. Bag]{Sagarmoy Bag}	\address{Department of Mathematics, Bankim Sardar College, Tangrakhali, West Bengal 743329, India}	\email{sagarmoy.bag01@gmail.com}
\author[D. Mandal]{Dhananjoy Mandal} \address{Department of Pure Mathematics, University of Calcutta, 35, Ballygunge Circular Road, Kolkata 700019, West Bengal, India}  \email{dmandal.cu@gmail.com}
\keywords{$\tau \mathcal{P}$-space, $\tau \mathcal{PU}$-space, $P$-completely separated, $\tau \mathcal{P}$-compact, $\mathcal{P}P$-space}
\subjclass[2020]{Primary 54C30; Secondary 13C99}
\begin{document}
	
	\title{Algebraic properties of the ring $C(X)_\mathcal{P}$.}

	%

	\thanks {}
	
	\maketitle
	
	\large
	\begin{abstract}
		Our aim is to study certain algebraic properties of the ring $C(X)_\mathcal{P}$ of real-valued functions on $X$ whose closure of discontinuity set is in an ideal of closed sets. We characterize $\mathcal{P}P$-spaces using $z$-ideals and essential ideals of $C(X)_\mathcal{P}$ and also almost $\mathcal{P}P$-spaces using $z^0$-ideals of $C(X)_\mathcal{P}$ and a topology finer than the original topology on $X$. We deduce that each maximal ideal of $C(X)_F$ \cite{GGT2018} (resp. $T'(X)$  \cite{A2010}) is a $z^0$-ideal. We establish that the notions of clean ring, weakly clean ring, semiclean ring, almost clean ring and exchange ring coincide in the ring $C(X)_\mathcal{P}$.
		End of this paper, we also characterize $\mathcal{P}P$-spaces and almost $\mathcal{P}P$-spaces using certain ideals having depth zero. We exhibit a condition on $\mathcal{P}$ under which prime and essential ideals of $C(X)_\mathcal{P}$ have depth zero.
	\end{abstract}

	\section{Introduction}
	
	Let $\mathcal{P}$ be an ideal of closed subsets of a $T_1$ topological space $(X,\tau)$ and we call the triplet $(X,\tau,\mathcal{P})$ a $\tau \mathcal{P}$-space. The subring $C(X)_\mathcal{P}$ was introduced in \cite{DABM} as the subring of $\mathbb{R}^X$ consisting of those functions $f\colon X\longrightarrow \mathbb{R}$ which satisfy the condition $\overline{D_f}\in \mathcal{P}$, where $D_f=\{x\in X\colon f\text{ is discontinuous at }x \}$. 
	For a $\tau \mathcal{P}$-space $(X,\tau,\mathcal{P})$, it is easy to check that the collection $coz_\mathcal{P}[X]= \{coz(f)\colon f\in C(X)_\mathcal{P} \}$  of all cozero sets of functions in $C(X)_\mathcal{P}$, forms a base for open sets for a topology $\tau_\mathcal{P}$ on the set $X$, which is in general finer than $\tau$. However, the two topologies, viz, $\tau_\mathcal{P}$ and $\tau$ coincide if and only if $C(X)=C(X)_\mathcal{P}$. We use the following lemma to achieve this equivalence.
\begin{lemma} \cite{DABM} \label{l00}
	Let $\mathcal{P}$ be an ideal of all closed subsets of a $T_1$-space $X$ and $\mathcal{P}'$ be the ideal of all closed subsets of the set of isolated points of $X$. Then $C(X)_\mathcal{P}=C(X)$ if and only if $\mathcal{P}\subseteq \mathcal{P'}$.
\end{lemma}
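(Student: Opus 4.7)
The plan is to prove both directions by exploiting what the isolated points of $X$ give us: at an isolated point every function is automatically continuous, and in a $T_1$-space every singleton is closed (so $\{x\}$ lies in any ideal $\mathcal{P}$ of closed sets that contains a superset of $x$).

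For the direction $(\Leftarrow)$, suppose $\mathcal{P}\subseteq\mathcal{P}'$. I would take any $f\in C(X)_\mathcal{P}$ and use that $\overline{D_f}\in\mathcal{P}\subseteq\mathcal{P}'$, so $\overline{D_f}$ is a closed set of isolated points. Since every isolated point is a point of continuity of any real-valued function on $X$, this forces $D_f\subseteq\overline{D_f}$ to be empty, so $f\in C(X)$. Combined with the trivial inclusion $C(X)\subseteq C(X)_\mathcal{P}$, this yields equality.

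For the direction $(\Rightarrow)$, I would argue contrapositively: assume $\mathcal{P}\not\subseteq\mathcal{P}'$ and produce an $f\in C(X)_\mathcal{P}\setminus C(X)$. Pick $A\in\mathcal{P}\setminus\mathcal{P}'$; then $A$ contains a non-isolated point $x_0$. Because $X$ is $T_1$, the singleton $\{x_0\}$ is closed, and as an ideal $\mathcal{P}$ is closed under taking closed subsets, so $\{x_0\}\in\mathcal{P}$. Define $f\colon X\to\mathbb{R}$ by $f(x_0)=1$ and $f(x)=0$ for $x\neq x_0$. Using the fact that $X\setminus\{x_0\}$ is open (again by $T_1$), $f$ is continuous at every $x\neq x_0$; and it is discontinuous at $x_0$ precisely because $x_0$ is non-isolated, so every neighborhood of $x_0$ contains points where $f$ vanishes. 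Hence $\overline{D_f}=\{x_0\}\in\mathcal{P}$, giving $f\in C(X)_\mathcal{P}\setminus C(X)$, the desired contradiction.

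The forward direction is the only one with genuine content; the main obstacle is supplying a single discontinuous function whose discontinuity set is small enough to sit inside $\mathcal{P}$, and the $T_1$ hypothesis is what lets the indicator of $\{x_0\}$ do the job. The reverse direction is essentially the observation that isolated-point discontinuities do not exist.
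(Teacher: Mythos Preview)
Your argument is correct in both directions. The $(\Leftarrow)$ direction is exactly the right observation: any function is continuous at an isolated point, so $D_f\subseteq\overline{D_f}\in\mathcal{P}'$ forces $D_f=\emptyset$. For $(\Rightarrow)$, your use of the $T_1$ hypothesis to pass from $A\in\mathcal{P}\setminus\mathcal{P}'$ to the singleton $\{x_0\}\in\mathcal{P}$ (via the hereditary property of an ideal of closed sets) is the key step, and the characteristic function $\chi_{\{x_0\}}$ then does the job.

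As for comparison: the present paper does not supply a proof of this lemma at all; it is quoted from \cite{DABM} and used as background for Theorem~\ref{t0}. So there is nothing to compare against here. Your proof is a clean, self-contained justification and would serve well if one wanted the paper to be independent of \cite{DABM} on this point.
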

The following theorem is now obvious.
\begin{theorem} \label{t0}	
	For a $\tau \mathcal{P}$-space $(X,\tau, \mathcal{P})$, $\tau =\tau_\mathcal{P}$ if and only if $C(X)=C(X)_\mathcal{P}$.
\end{theorem}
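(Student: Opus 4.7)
The plan is to prove the two implications using the definition of $\tau_\mathcal{P}$ as the topology with base $coz_\mathcal{P}[X]$, together with the inclusion $\tau \subseteq \tau_\mathcal{P}$ already remarked in the discussion preceding the theorem. The inclusion $C(X) \subseteq C(X)_\mathcal{P}$ at the ring level is automatic, since any $\tau$-continuous $f$ has $D_f = \emptyset \in \mathcal{P}$.

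For the forward implication, I assume $\tau = \tau_\mathcal{P}$ and take $f \in C(X)_\mathcal{P}$; the task is to show $f$ is $\tau$-continuous. For each $a \in \mathbb{R}$ one has
\[
f^{-1}\bigl((a,\infty)\bigr) = coz\bigl((f-a)\vee 0\bigr), \qquad f^{-1}\bigl((-\infty,a)\bigr) = coz\bigl((a-f)\vee 0\bigr),
\]
and the two functions appearing on the right belong to $C(X)_\mathcal{P}$, because $C(X)_\mathcal{P}$ is a subring of $\mathbb{R}^X$ containing the constants and closed under $\vee$ (this in turn follows from $D_{f\pm g}, D_{f\vee g}\subseteq D_f\cup D_g$ together with the ideal property of $\mathcal{P}$). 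Consequently both preimages lie in $coz_\mathcal{P}[X]\subseteq \tau_\mathcal{P}=\tau$, and since these sets form a subbase for the usual topology on $\mathbb{R}$, $f$ is $\tau$-continuous, so $f \in C(X)$.

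For the backward implication, I assume $C(X) = C(X)_\mathcal{P}$. Then every element of $coz_\mathcal{P}[X]$ is a cozero set of a $\tau$-continuous function and is therefore $\tau$-open, so the base of $\tau_\mathcal{P}$ is contained in $\tau$, yielding $\tau_\mathcal{P} \subseteq \tau$; combined with $\tau \subseteq \tau_\mathcal{P}$, this gives $\tau = \tau_\mathcal{P}$. The argument is entirely formal, which is why the authors label the statement as obvious; the main (and only) verification is the closure of $C(X)_\mathcal{P}$ under the algebraic and lattice operations used to express the subbasic preimages. Lemma~\ref{l00} is not invoked directly, but in combination with this theorem it reformulates the topological equality $\tau=\tau_\mathcal{P}$ as the purely combinatorial condition $\mathcal{P}\subseteq\mathcal{P}'$.
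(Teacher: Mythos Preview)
Your proof is correct and is exactly the direct verification the paper intends when it labels the theorem ``obvious'' without supplying an explicit argument. Your closing remark on Lemma~\ref{l00} is also on target: despite the paper's phrasing, the lemma is not actually needed for the proof of Theorem~\ref{t0} itself but rather combines with it to give the chain $\tau=\tau_\mathcal{P}\iff C(X)=C(X)_\mathcal{P}\iff \mathcal{P}\subseteq\mathcal{P}'$.
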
 
\begin{notation}
	We simply write $X_\mathcal{P}$ for the topological space $(X,\tau_\mathcal{P})$.
\end{notation}

One may note that if $\chi_{\{{a}\}}\in C(X)_\mathcal{P}$ for all $a\in X$, $\tau_\mathcal{P}$ is nothing but the discrete topology on
 $X$. So in this case $C(X_\mathcal{P})=\mathbb{R}^X$ which is often not equal to $C(X)_\mathcal{P}$ (for example, take $X=\mathbb{R}$, the real line and $\mathcal{P}=\mathcal{P}_f$ \cite{DABM}). Moreover, if $X=\mathbb{R}$ endowed with cofinite topology and $\mathcal{P}=\{\emptyset,\{1\} \}$, then $(X,\tau)$ is a connected topological space but $(X,\tau_\mathcal{P})$ contains a clopen set $\{1\}$ and is thus disconnected.

Throughout the article, for a subset $A\subseteq X$, we shall denote $int_{X_\mathcal{P}}A$ to be the interior of $A$ in $X_\mathcal{P}$ and $cl_{X_\mathcal{P}}A$ to be the closure of $A$ in $X_\mathcal{P}$.

The aim of  this article is to study certain algebraic properties of $C(X)_\mathcal{P}$ starting with the study of various types of ideals of the ring.

 In Section 2, we study $z$-ideal, $z^0$-ideal and essential ideal of $C(X)_\mathcal{P}$. We have introduced the concept of  $z_\mathcal{P}$-ideals of $C(X)_\mathcal{P}$ in \cite{DABM} and in this section we establish  that $z_\mathcal{P}$-ideals of $C(X)_\mathcal{P}$ and  $z$-ideals of $C(X)_\mathcal{P}$ are equivalent (Theorem \ref{t1.3}). We characterize $\mathcal{P}P$-spaces using $z$-ideals of $C(X)_\mathcal{P}$ (Theorem \ref{l1.0}). We obtain that the nature of $z$-ideals of $C(X)_\mathcal{P}$ depends heavily on the choice of $\mathcal{P}$ (See Example \ref{eg}). Furthermore, we deduce that each ideal of $T'(X)$ \cite{A2010} is a $z$-ideal. In this section, we also establish a topological characterization of $z^0$-ideals (\cite{AKA1999}) of $C(X)_\mathcal{P}$ using $X_\mathcal{P}$. Using this characterization, we show that all maximal ideals of the rings $C(X)_F$ \cite{GGT2018}, $T'(X)$ \cite{A2010} and $C(X)_K$ \cite{DABM} are $z^0$-ideals. We  introduce the notion of almost $\mathcal{P}P$-spaces and characterize these spaces algebraically using $z$-ideals and $z^0$-ideals of $C(X)_\mathcal{P}$ (Theorem \ref{t1.9}). We end this section by discussing essential ideals of $C(X)_\mathcal{P}$. We characterize these ideals using the topology $X_\mathcal{P}$ and $\mathcal{P}P$-spaces using essential ideals of $C(X)_\mathcal{P}$.



	In Section 3, we discuss conditions under which the ring  $C(X)_\mathcal{P}$ becomes a clean ring, semiclean ring, almost clean ring, weakly clean ring and even exchange ring. The first interesting observation we make is that the rings $C(X)_F$, $T'(X)$ and $C(X)_K$ are clean rings provided $X$ contains at least three elements. We characterize clean elements and almost clean elements of $C(X)_\mathcal{P}$ and even clean elements of $C^*(X)_\mathcal{P}$, using the topology $X_\mathcal{P}$. We introduce the notion of a $\tau_\mathcal{P}$-zero-dimensional space and use it to characterize $C(X)_\mathcal{P}$ as a clean ring. We obtain that a Tychonoff, extremally disconnected space is $\tau \mathcal{P}$-zero-dimensional, for any choice of $\mathcal{P}$. We infer that $\beta \mathbb{N}$ is $\tau \mathcal{P}$-zero-dimensional and $C(\beta \mathbb{N})_\mathcal{P}$ is a clean ring, for any choice of $\mathcal{P}$.  We end this section by establishing that the notion of clean ring, weakly clean ring, semiclean ring, almost clean ring and exchange ring, all coincide in case of the ring $C(X)_\mathcal{P}$.
	
	In the final section of this article, we study the depths of certain ideals of $C(X)_\mathcal{P}$. We characterize $\mathcal{P}P$-spaces and almost $\mathcal{P}P$-spaces using depth of ideals and essential ideals of  $C(X)_\mathcal{P}$ respectively (Theorem \ref{t5.1}). We obtain  that the depth of each free ideal (and hence essential ideal), each prime ideal and each maximal ideal of $C(X)_\mathcal{P}$ is zero whenever $C(X)_\mathcal{P}\supseteq \{\chi_{\{{x}\}}\colon x\in X \}$ and as a consequences  we observe  the same is true for the rings  $C(X)_F$, $C(X)_K$ and $T'(X)$.  We can even say that the depth of each prime ideal of $C(X)_F$ (also, $C(X)_K$ and $T'(X)$) is zero. We note that $C(X)$ may have free ideals or maximal ideals having non-zero depths. At the end of this section we point out that depth of all ideals of $C(X)_\mathcal{P}$ may not be zero even if $C(X)_\mathcal{P}\supseteq \{\chi_{\{{x}\}}\colon x\in X \}$.


	\section{Ideals of $C(X)_\mathcal{P}$.}
	
	In this section, we deal with various types of ideals of $C(X)_\mathcal{P}$ which include $z$-ideal, $z^0$-ideal and essential ideal of $C(X)_\mathcal{P}$.
	
	Let $R$ be a commutative ring with unity. We denote $Max(R)$ and $P(R)$ to be the set of all maximal ideals of $R$ and the set of all minimal prime ideals of $R$ respectively. For each $a\in R$, we define $M(a)=\bigcap \{M\in Max(R)\colon a\in M \}$ and $P(a)=\bigcap \{I\in P(R)\colon a\in I \}$. An ideal $I$ of $R$ is said to be $z$-ideal if it satisfies the condition: $a\in I\implies M(a)\subseteq I$ \cite{Mason} and it is said to be a $z^0$-ideal if it satisfies the condition: $a\in I\implies P(a)\subseteq I$ \cite{AKA1999}. An ideal $E$ of $R$ is said to be an essential ideal if it intersects all non zero ideals of $R$ non-trivially. Intersection of all essential ideals of $R$ is said to be the $Socle(R)$.  
	
	We first establish a characterization of essential ideals of an arbitrary reduced ring $R$. (We note that $C(X)_\mathcal{P}$ is a reduced ring for any $\tau \mathcal{P}$-space $(X,\tau, \mathcal{P})$.)
	\begin{lemma} \label{l3.0}
		Let $E$ be a non-zero ideal of a reduced ring $R$. Then $E$ is an essential ideal of $R$ if and only if $Ann(E)=\{0\}$, where $Ann(E)=\{r\in R\colon rx=0 \text{ for all }x\in E \}$.
	\end{lemma}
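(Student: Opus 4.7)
The plan is to prove both directions directly, with the reducedness of $R$ entering only in the forward direction. Throughout, I will exploit the fact that $R$ has a unity, so principal ideals take the form $(r)=\{ra:a\in R\}$, and that any element produced as a product $ra$ of an element of an ideal with anything in $R$ stays in that ideal.

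For the forward direction, suppose $E$ is essential and let $r\in Ann(E)$. If $r\neq 0$, then $(r)$ is a non-zero ideal, so essentiality gives a non-zero $s\in (r)\cap E$. Writing $s=ra$ for some $a\in R$ and using $r\in Ann(E)$ together with $s\in E$, I would observe
\[
s^{2}=s\cdot ra = a\cdot(rs)=0,
\]
and then invoke the hypothesis that $R$ is reduced to conclude $s=0$, a contradiction. Hence $Ann(E)=\{0\}$.

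For the converse, suppose $Ann(E)=\{0\}$ and let $I$ be any non-zero ideal of $R$. Pick $0\neq a\in I$; since $a\notin Ann(E)$, there exists $x\in E$ with $ax\neq 0$. Because $I$ and $E$ are both ideals, $ax\in I\cap E$, showing $I\cap E\neq\{0\}$. Therefore $E$ meets every non-zero ideal non-trivially and is essential.

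There is no serious obstacle here; the only subtle point worth flagging is that reducedness is genuinely used (not just commutativity) in the forward direction to pass from $s^{2}=0$ to $s=0$. Without this hypothesis, an essential ideal of a general commutative ring with unity need not have zero annihilator, so the lemma is sharp in its reduced-ring setting and its application to $C(X)_{\mathcal P}$ is justified by the parenthetical remark that $C(X)_{\mathcal P}$ is reduced.
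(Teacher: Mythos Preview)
Your proof is correct and follows essentially the same approach as the paper's. In the forward direction the paper intersects $E$ directly with the ideal $Ann(E)$ rather than with the principal ideal $(r)$, but since $(r)\subseteq Ann(E)$ this is only a cosmetic difference; the converse direction is identical to the paper's (and in fact your version is slightly cleaner, as the paper unnecessarily picks $x\in I\setminus E$ rather than an arbitrary nonzero $x\in I$).
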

	 \begin{proof}
	 	Let us assume $E$ to be an essential ideal of $R$ and let $a\in Ann(E)\cap E$. Then $a^2=0\implies a=0$, as $R$ is a reduced ring. So $Ann(E)\cap E=\{0\}$. Thus $Ann(E)=\{0\}$. Conversely, let $Ann(E)=\{0\}$ and $I$ be a non-zero ideal of $R$. Also let $x\in I\setminus E$. Then there exists $r\in E$ such that $rx\neq 0$. So $0\neq rx\in I\cap E$. Thus $E$ is an essential ideal of $R$.
	 \end{proof}

	\subsection{$z$-ideals of $C(X)_\mathcal{P}$.}

	It is known that for a Tychonoff space $X$, an ideal $I$ of $C(X)$ is a $z$-ideal if and only if for $f,g\in C(X)$, $Z(f)\subseteq Z(g)$ with $f\in I$ implies that $g\in I$ (See 4A(5) \cite{GJ1976}). Also it is proved in \cite{AKA1999} that an ideal $I$ of $C(X)$ is a $z^0$-ideal if and only if for $f,g\in C(X)$, $int$ $Z(f) = int$ $Z(g)$ and $f\in I$ implies that $g\in I$.
	
	\begin{definition} \cite{DABM} \label{d1.1}
		An ideal $I$ of $C(X)_\mathcal{P}$ is called a $z_\mathcal{P}$-ideal if $Z_\mathcal{P}^{-1}Z_\mathcal{P}[I]=I$. Equivalently, an ideal $I$ of $C(X)_\mathcal{P}$ is a $z_\mathcal{P}$-ideal if and only if for $f,g\in C(X)_\mathcal{P}$, $Z_\mathcal{P}(f)\subseteq Z_\mathcal{P}(g)$ with $f\in I$ implies that $g\in I$.
	\end{definition}
	
	We attempt to show that an ideal $I$ of $C(X)_\mathcal{P}$ is a $z_\mathcal{P}$-ideal if and only if $I$ is a $z$-ideal of $C(X)_\mathcal{P}$. For this, we need the following lemma.
	
	\begin{lemma} \label{l1.2}
		If $f\in M(g)$, then $Z_\mathcal{P}(g)\subseteq Z_\mathcal{P}(f)$.
	\end{lemma}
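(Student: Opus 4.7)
The plan is to prove the contrapositive: if $Z_\mathcal{P}(g)\not\subseteq Z_\mathcal{P}(f)$, then there exists a maximal ideal of $C(X)_\mathcal{P}$ containing $g$ but not $f$, so that $f\notin M(g)$.

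First I would pick a witness $x_0\in Z_\mathcal{P}(g)\setminus Z_\mathcal{P}(f)$, so $g(x_0)=0$ while $f(x_0)\neq 0$. The natural candidate for a maximal ideal separating $g$ from $f$ is the fixed ideal $M_{x_0}=\{h\in C(X)_\mathcal{P}\colon h(x_0)=0\}$.

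The key step is to verify that $M_{x_0}$ is in fact maximal in $C(X)_\mathcal{P}$. For this I would consider the evaluation homomorphism $e_{x_0}\colon C(X)_\mathcal{P}\to \mathbb{R}$ given by $e_{x_0}(h)=h(x_0)$; it is well-defined because every $h\in C(X)_\mathcal{P}$ is a real-valued function, it is a ring homomorphism, and it is surjective since the constant functions belong to $C(X)_\mathcal{P}$. Its kernel is exactly $M_{x_0}$, and hence $C(X)_\mathcal{P}/M_{x_0}\cong \mathbb{R}$ is a field, making $M_{x_0}$ maximal.

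With maximality in hand, the conclusion is immediate: $g(x_0)=0$ gives $g\in M_{x_0}$, while $f(x_0)\neq 0$ gives $f\notin M_{x_0}$. Since $M_{x_0}\in Max(C(X)_\mathcal{P})$ contains $g$, we have $M(g)\subseteq M_{x_0}$ would force $f\in M_{x_0}$, a contradiction; thus $f\notin M(g)$, as required. There is no real obstacle here, since the argument rests entirely on the existence of the fixed maximal ideals $M_{x_0}$ in $C(X)_\mathcal{P}$; the only subtle point worth recording is that although $C(X)_\mathcal{P}$ may contain highly discontinuous elements, point evaluation is still a well-defined ring homomorphism onto $\mathbb{R}$, and that is all one needs.
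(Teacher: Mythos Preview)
Your proof is correct and follows essentially the same approach as the paper: both pick a point $x_0\in Z_\mathcal{P}(g)\setminus Z_\mathcal{P}(f)$ and use the fixed maximal ideal $M_{x_0}$ to separate $g$ from $f$. The only difference is that you explicitly verify the maximality of $M_{x_0}$ via the evaluation homomorphism, whereas the paper simply cites \cite{DABM} for this fact.
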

	\begin{proof}
		If possible, let there exists $f\in M(g)$ such that $Z_\mathcal{P}(g)\nsubseteq Z_\mathcal{P}(f)$. Then there exists $x\in Z_\mathcal{P}(g)\setminus Z_\mathcal{P}(f)$. So $g\in M_x=\{f\in C(X)_\mathcal{P}\colon f(x)=0 \}$ (\cite{DABM}) but $f\notin M_x$ which contradicts that $f\in M(g)$.
	\end{proof}
	We now state and prove an equivalence between $z$-ideals and $z_\mathcal{P}$-ideals of $C(X)_\mathcal{P}$.
	\begin{theorem} \label{t1.3}
		An ideal $I$ of $C(X)_\mathcal{P}$ is a $z_\mathcal{P}$-ideal if and only if $I$ is a $z$-ideal of $C(X)_\mathcal{P}$.
	\end{theorem}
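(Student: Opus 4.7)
The plan is to verify the two implications separately, using Lemma \ref{l1.2} for one direction and the maximal-ideal characterization of $M(g)$ combined with the fact that nowhere-vanishing functions are units in $C(X)_\mathcal{P}$ for the other.

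For the easier direction, suppose $I$ is a $z_\mathcal{P}$-ideal. Given $g \in I$ and $f \in M(g)$, Lemma \ref{l1.2} immediately yields $Z_\mathcal{P}(g) \subseteq Z_\mathcal{P}(f)$, and the $z_\mathcal{P}$-ideal hypothesis then forces $f \in I$. Hence $M(g) \subseteq I$, so $I$ is a $z$-ideal.

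For the converse, suppose $I$ is a $z$-ideal, take $f \in I$ and $g \in C(X)_\mathcal{P}$ with $Z_\mathcal{P}(f) \subseteq Z_\mathcal{P}(g)$, and aim to show $g \in M(f)$; membership in $I$ will then follow from the $z$-ideal condition. Pick a maximal ideal $M$ with $f \in M$ and assume towards a contradiction that $g \notin M$. Maximality of $M$ yields $h \in M$ and $r \in C(X)_\mathcal{P}$ with $h + rg = 1$. Evaluating at any point of $Z_\mathcal{P}(h) \cap Z_\mathcal{P}(g)$ would give $0 = 1$, so $Z_\mathcal{P}(h) \cap Z_\mathcal{P}(g) = \emptyset$; combining with $Z_\mathcal{P}(f) \subseteq Z_\mathcal{P}(g)$ we get $Z_\mathcal{P}(h^2 + f^2) = Z_\mathcal{P}(h) \cap Z_\mathcal{P}(f) = \emptyset$. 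Since $h^2 + f^2$ is nowhere zero, its pointwise reciprocal is well-defined and shares the same discontinuity set as $h^2 + f^2$, hence lies in $C(X)_\mathcal{P}$; thus $h^2 + f^2$ is a unit, contradicting $h^2 + f^2 \in M$.

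The substantive step is the second direction, where one must extract a concrete witness to $g \in M$ out of the abstract maximality of $M$; the crux is recognizing that the identity $h + rg = 1$ translates cleanly into the zero-set statement $Z_\mathcal{P}(h) \cap Z_\mathcal{P}(g) = \emptyset$, which, together with the hypothesis $Z_\mathcal{P}(f) \subseteq Z_\mathcal{P}(g)$, produces the sum-of-squares unit inside $M$. The only non-formal ingredient needed beyond Lemma \ref{l1.2} is the fact that units of $C(X)_\mathcal{P}$ are exactly the nowhere-zero elements, which follows because $D_{1/u} = D_u$ whenever $u$ never vanishes.
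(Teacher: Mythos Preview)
Your proof is correct and follows essentially the same strategy as the paper: both arguments use Lemma~\ref{l1.2} for the implication $z_\mathcal{P}$-ideal $\Rightarrow$ $z$-ideal, and both reduce the converse to showing that whenever $Z_\mathcal{P}(f)\subseteq Z_\mathcal{P}(g)$ and $f$ lies in a maximal ideal $M$, then $g\in M$ as well. The only difference is in how this last step is justified: the paper invokes the fact (established in \cite{DABM}) that every maximal ideal of $C(X)_\mathcal{P}$ is already a $z_\mathcal{P}$-ideal, i.e.\ $Z_\mathcal{P}^{-1}Z_\mathcal{P}[M]=M$, whereas you supply a self-contained argument via the sum-of-squares unit $h^2+f^2$. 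Your route is slightly more elementary in that it does not appeal to the external reference, at the cost of reproving a special case of that cited result; the paper's route is shorter because the needed lemma is already on record.
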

	\begin{proof}
		Let $I$ be a $z$-ideal of $C(X)_\mathcal{P}$ and $Z_\mathcal{P}(f)\in Z_\mathcal{P}[I]$ for some $f\in C(X)_\mathcal{P}$. Then there exists $g\in I$ such that $Z_\mathcal{P}(f)=Z_\mathcal{P}(g)$. Let $M$ be an arbitrary maximal ideal of $C(X)_\mathcal{P}$ containing $g$. Then $f\in Z_\mathcal{P}^{-1}Z_\mathcal{P}[M]=M$. Since $M$ is an arbitrary maximal ideal of $C(X)_\mathcal{P}$ containing $g$, $f\in M(g)$ and $M(g)\subseteq I$ follows from our assumption. 
		
		Conversely, let $I$ be a $z_\mathcal{P}$-ideal of $C(X)_\mathcal{P}$ and let $f\in I$. We need to show that $M(f)\subseteq I$. Let $g\in M(f)$. Then by Lemma \ref{l1.2}, $Z_\mathcal{P}(f)\subseteq Z_\mathcal{P}(g)$ and since $f\in I$, we have $g\in I$. This completes the proof.
	\end{proof}
	
	Thus, from now onwards, we use the term $z$-ideal to describe a $z_\mathcal{P}$-ideal of $C(X)_\mathcal{P}$. 
	
	We note that the nature of $z$-ideals of $C(X)_\mathcal{P}$ depends heavily on the choice of $\mathcal{P}$ as can be seen in the following example.
	\begin{example} \label{eg}
		Consider $X=\mathbb{R}$.
		\begin{enumerate}
			\item For any ideal of closed sets $\mathcal{P}$ containing $\{0\}$, the ideal generated by the identity function is a $z$-ideal of $C(X)_\mathcal{P}$ but not of $C(X)$.
			\item Consider the function $f(x)=\sin \pi x $ for all $x\in X$ and the ideal $I=<f^2>$. Then $I$ is not a $z$-ideal of $C(X)_\mathcal{P}$ for $\mathcal{P}=\{\emptyset \}$, $\mathcal{P}=\mathcal{P}_f$ or even $\mathcal{P}=\mathcal{K}$. But $I$ is a $z$-ideal of $C(X)_\mathcal{P}$ if we take $\mathcal{P}=\mathcal{P}_{nd}$. Indeed if $I$ is a $z$-ideal of $C(X)_\mathcal{P}$, then there exists $g\in C(X)_\mathcal{P}$ such that $f=f^2g$ and so $D_g=\mathbb{Z}$. Thus $\mathbb{Z}\in \mathcal{P}$.
		\end{enumerate}
		
	\end{example}

	We need the following lemma to establish a characterization for $\mathcal{P}P$-spaces. We recall that a $\tau \mathcal{P}$-space $(X,\tau,\mathcal{P})$ is said to be a $\mathcal{P}P$ space \cite{DABM} if the ring $C(X)_\mathcal{P}$ is a Von-Neumann regular ring. 
	\begin{lemma} \label{l01}
		A function $f\in C(X)_\mathcal{P}$ is a Von-Neumann regular element (for a commutative ring $R$ with unity, an element $a\in R$ is said to be a Von-Neumann regular element if there exists $b\in R$ such that $a=a^2b$) if and only if $Z_\mathcal{P}(f)=coz(g) $ for some $g\in C(X)_\mathcal{P}$.
	\end{lemma}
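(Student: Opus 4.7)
The plan is to prove both directions constructively. For the forward direction, assume $f = f^2 h$ for some $h \in C(X)_\mathcal{P}$, and consider the candidate $g := 1 - fh \in C(X)_\mathcal{P}$. A direct computation gives $fg = f - f^2h = 0$, so $coz(g) \subseteq Z_\mathcal{P}(f)$. Conversely, if $f(x)=0$ then $g(x)=1$, so $x \in coz(g)$. This yields $Z_\mathcal{P}(f) = coz(g)$ with no further work.

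The reverse direction is the substantive one. Given $g \in C(X)_\mathcal{P}$ with $Z_\mathcal{P}(f) = coz(g)$ (equivalently $coz(f) = Z(g)$), the natural candidate is
\[
h(x) = \begin{cases} 1/f(x) & \text{if } f(x) \neq 0, \\ 0 & \text{if } f(x) = 0. \end{cases}
\]
Then $f^2 h = f$ pointwise, so the only real issue is membership: we must verify $\overline{D_h} \in \mathcal{P}$, i.e.\ that $h \in C(X)_\mathcal{P}$.

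I would establish this via the inclusion $D_h \subseteq D_f \cup D_g$. For a point $x \notin D_f \cup D_g$ I would split into two cases. If $f(x) \neq 0$, then continuity of $f$ at $x$ gives a $\tau$-neighborhood on which $f$ is bounded away from $0$, so $h = 1/f$ is continuous at $x$. If $f(x) = 0$, then $x \in Z_\mathcal{P}(f) = coz(g)$, so $g(x) \neq 0$; since $g$ is continuous at $x$, there is a $\tau$-neighborhood $U$ of $x$ with $U \subseteq coz(g) = Z_\mathcal{P}(f)$, so $h \equiv 0$ on $U$ and therefore $h$ is continuous at $x$. Together these cases prove the inclusion, and then
\[
\overline{D_h} \subseteq \overline{D_f} \cup \overline{D_g} \in \mathcal{P},
\]
since $\mathcal{P}$ is an ideal of closed sets and $f,g \in C(X)_\mathcal{P}$.

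The main obstacle is precisely this last verification, and in particular the case $f(x) = 0$: the discontinuity set of $h$ could a priori pick up the whole topological boundary of $Z(f)$, which need not lie in $\mathcal{P}$. The hypothesis $Z_\mathcal{P}(f) = coz(g)$ is what rescues the argument, because it forces $Z(f)$ to contain a $\tau$-open neighborhood of each of its $g$-continuity points, trapping the new discontinuities of $h$ inside $D_f \cup D_g$. This is the delicate point where the role of $g$ (as opposed to merely requiring $Z(f)$ to be open in $\tau_\mathcal{P}$) is used essentially.
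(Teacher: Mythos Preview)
Your proof is correct and follows essentially the same approach as the paper: the forward direction uses $g = \boldsymbol{1} - fh$ exactly as the paper does, and the reverse direction defines the same $h$ and argues $\overline{D_h} \subseteq \overline{D_f} \cup \overline{D_g} \in \mathcal{P}$. Your case analysis for the inclusion $D_h \subseteq D_f \cup D_g$ is in fact more detailed than the paper's one-line assertion, but the underlying argument is identical.
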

	
	\begin{proof}
		Let $f\in C(X)_\mathcal{P}$ be a Von-Neumann regular element. Then there exists $g\in C(X)_\mathcal{P}$ such that $f=f^2g$ which implies that $Z_\mathcal{P}(f)=coz(\boldsymbol{1}-fg)$. Conversely, let $Z_\mathcal{P}(f)=coz(g) $ for some $g\in C(X)_\mathcal{P}$. Define $h\colon X\longrightarrow \mathbb{R}$ by $h(x)=\begin{cases}
			\dfrac{1}{f(x)} \text{ if }x\in coz(f)=Z_\mathcal{P}(g) \\ 0 \text{ if }x\in Z_\mathcal{P}(f)
		\end{cases}.$ Then $D_h\subseteq \overline{D_f}\cup \overline{D_g}\in \mathcal{P}$ which ensures that $h\in C(X)_\mathcal{P}$ and $f=f^2h$. Thus $f$ is a Von-Neumann regular element.
	\end{proof} The following corollary is an immediate consequence of above result. 
	\begin{corollary} \label{c2}
		A $\tau \mathcal{P}$-space $(X,\tau,\mathcal{P})$ is a $\mathcal{P}P$-space if and only if $Z_\mathcal{P}[X]=coz_\mathcal{P}[X]$.
	\end{corollary}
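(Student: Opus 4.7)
The plan is to derive the equivalence directly from Lemma \ref{l01}, which already identifies Von-Neumann regularity of $f\in C(X)_\mathcal{P}$ with the condition $Z_\mathcal{P}(f)=coz(g)$ for some $g\in C(X)_\mathcal{P}$. Thus the $\mathcal{P}P$-space hypothesis on $(X,\tau,\mathcal{P})$ translates to the set-theoretic inclusion $Z_\mathcal{P}[X]\subseteq coz_\mathcal{P}[X]$, and vice versa; the only thing left is the reverse inclusion $coz_\mathcal{P}[X]\subseteq Z_\mathcal{P}[X]$ in the forward direction.

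For the easy direction, I would assume $Z_\mathcal{P}[X]=coz_\mathcal{P}[X]$, pick an arbitrary $f\in C(X)_\mathcal{P}$, observe $Z_\mathcal{P}(f)\in Z_\mathcal{P}[X]=coz_\mathcal{P}[X]$, and invoke Lemma \ref{l01} to conclude that $f$ is Von-Neumann regular. As $f$ was arbitrary, $C(X)_\mathcal{P}$ is a Von-Neumann regular ring, i.e., $(X,\tau,\mathcal{P})$ is a $\mathcal{P}P$-space.

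For the forward direction, assume the $\mathcal{P}P$-property. Given any $f\in C(X)_\mathcal{P}$, Lemma \ref{l01} produces $g\in C(X)_\mathcal{P}$ with $Z_\mathcal{P}(f)=coz(g)$, yielding $Z_\mathcal{P}[X]\subseteq coz_\mathcal{P}[X]$. For the opposite inclusion, I would start with an arbitrary $coz(g)\in coz_\mathcal{P}[X]$ and use Von-Neumann regularity of $g$ to write $g=g^2h$ with $h\in C(X)_\mathcal{P}$. Then $g(\boldsymbol{1}-gh)=0$ shows $\boldsymbol{1}-gh$ vanishes on $coz(g)$, while on $Z_\mathcal{P}(g)$ it takes the value $1$; hence $coz(g)=Z_\mathcal{P}(\boldsymbol{1}-gh)\in Z_\mathcal{P}[X]$.

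No real obstacle is expected: Lemma \ref{l01} does all the heavy lifting, and the only subtlety is remembering to verify both inclusions in the equality $Z_\mathcal{P}[X]=coz_\mathcal{P}[X]$, the less obvious one being that cozero sets arise as zero sets of the complementary idempotent-type function $\boldsymbol{1}-gh$ produced by the regularity equation.
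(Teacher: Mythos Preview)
Your proof is correct and follows the same approach as the paper, which simply declares the corollary ``an immediate consequence'' of Lemma~\ref{l01} without further detail. You have merely spelled out both inclusions explicitly; in particular, your argument that $coz(g)=Z_\mathcal{P}(\boldsymbol{1}-gh)$ is exactly the computation hidden in the forward direction of the proof of Lemma~\ref{l01}.
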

	We are now able to characterize $\mathcal{P}P$-spaces as follows.
	\begin{theorem} \label{l1.0}
		A $\tau \mathcal{P}$-space $(X,\tau,\mathcal{P})$ is a $\mathcal{P}P$-space if and only if every ideal of $C(X)_\mathcal{P}$ is a $z$-ideal.
	\end{theorem}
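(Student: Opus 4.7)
The plan is to use the $z_\mathcal{P}$-ideal characterization (Theorem \ref{t1.3}) together with Lemma \ref{l01} / Corollary \ref{c2}, rather than working directly with the Von-Neumann definition. So the statement to prove becomes: $C(X)_\mathcal{P}$ is Von-Neumann regular if and only if every ideal $I$ of $C(X)_\mathcal{P}$ has the property that $f\in I$ and $Z_\mathcal{P}(f)\subseteq Z_\mathcal{P}(g)$ force $g\in I$.

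For the converse ($z$-ideals $\Rightarrow$ $\mathcal{P}P$), I would argue pointwise on a single element. Fix $f\in C(X)_\mathcal{P}$ and look at the principal ideal $(f^2)$. By hypothesis it is a $z$-ideal. Since $Z_\mathcal{P}(f^2)=Z_\mathcal{P}(f)$ and $f^2\in(f^2)$, the $z_\mathcal{P}$-condition gives $f\in (f^2)$, i.e.\ $f=f^2h$ for some $h\in C(X)_\mathcal{P}$. Thus $f$ is Von-Neumann regular, and since $f$ was arbitrary, $(X,\tau,\mathcal{P})$ is a $\mathcal{P}P$-space. This direction is essentially a one-liner.

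For the forward direction ($\mathcal{P}P$ $\Rightarrow$ every ideal is a $z$-ideal), let $I$ be an ideal, $f\in I$, and suppose $Z_\mathcal{P}(f)\subseteq Z_\mathcal{P}(g)$, equivalently $coz(g)\subseteq coz(f)$. Using that $f$ is Von-Neumann regular, pick $h\in C(X)_\mathcal{P}$ with $f=f^2h$. The key observation is that the product $fh$ behaves like the indicator of $coz(f)$: on $coz(f)$ one has $fh=1$ from $f=f^2h$, while on $Z_\mathcal{P}(f)$ clearly $fh=0$. Then $g\cdot fh$ agrees with $g$ on $coz(g)\subseteq coz(f)$ (where $fh=1$) and on $Z_\mathcal{P}(g)$ (where $g=0$), so $g=g\cdot fh=(gh)\cdot f\in(f)\subseteq I$.

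The only real technical point is verifying that multiplying by $fh$ genuinely recovers $g$ on all of $X$; this is the step that uses the hypothesis $coz(g)\subseteq coz(f)$ in a crucial way, together with the explicit values of $fh$ on $coz(f)$ and its zero set. Once this identity $g=ghf$ is written down, membership in $I$ follows at once, and no further appeal to the structure of $\mathcal{P}$ is needed since $h$ is produced inside $C(X)_\mathcal{P}$ by the Von-Neumann regularity hypothesis. I expect no serious obstacle; the cleanest presentation is to prove the converse first (trivial from $(f^2)$ being a $z$-ideal) and then do the forward direction through the idempotent-like behaviour of $fh$.
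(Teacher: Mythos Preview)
Your proof is correct. The converse direction is identical to the paper's (both apply the $z$-ideal hypothesis to $\langle f^2\rangle$ and use $Z_\mathcal{P}(f)=Z_\mathcal{P}(f^2)$).

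For the forward direction the two arguments diverge. The paper, given $g\in I$ with $Z_\mathcal{P}(f)=Z_\mathcal{P}(g)$, invokes Lemma~\ref{l01} to obtain $h$ with $Z_\mathcal{P}(f)=coz(h)$, then builds the piecewise quotient $t=f/g$ on $coz(g)$ and $t=0$ on $Z_\mathcal{P}(g)$, and must verify that $t\in C(X)_\mathcal{P}$ by checking continuity on $X\setminus(\overline{D_f}\cup\overline{D_g}\cup\overline{D_h})$; the role of $h$ is precisely to make the two pieces clopen there. Your route is more algebraic: from $f=f^2h$ you immediately have the idempotent $fh\in C(X)_\mathcal{P}$, and the identity $g=g(fh)=(gh)f$ falls out of $coz(g)\subseteq coz(f)$ with no piecewise definition or continuity verification. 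This buys you a cleaner argument that stays entirely inside the ring, works under the weaker containment $Z_\mathcal{P}(f)\subseteq Z_\mathcal{P}(g)$ (not just equality), and does not need to cite Lemma~\ref{l01} separately. The paper's version, on the other hand, makes the topological mechanism (clopenness of zero sets) more visible.
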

	\begin{proof}
		Let $(X,\tau,\mathcal{P})$ be a $\mathcal{P}P$-space and $I$ an ideal of $C(X)_\mathcal{P}$. Let $f\in C(X)_\mathcal{P}$ with $Z_\mathcal{P}(f)\in Z_\mathcal{P}[I]$. Then there exists $g\in I$ such that $Z_\mathcal{P}(f)=Z_\mathcal{P}(g)$. Since $f$ is Von-Neumann regular, there exists $h\in C(X)_\mathcal{P}$ such that $Z_\mathcal{P}(f)=coz(h)$ (by Lemma \ref{l01}). Define $\displaystyle{t(x)=\begin{cases}
				\dfrac{f(x)}{g(x)} \text{ if }x\notin Z_\mathcal{P}(g) \\
				0 \text{ otherwise }
		\end{cases}}.$ Then $t$ is continuous on $X\setminus (\overline{D_f}\cup \overline{D_g}\cup \overline{D_h})$ and so $t\in C(X)_\mathcal{P}$ with $f=gt\in I$.\\ Conversely, let us assume that each ideal of $C(X)_\mathcal{P}$ is a $z$-ideal and let $f\in C(X)_\mathcal{P}$ be a non-unit element. Then $<f^2>$ is an ideal of $C(X)_\mathcal{P}$ and hence a $z$-ideal. Since $Z_\mathcal{P}(f)=Z_\mathcal{P}(f^2)$, it follows that $f\in <f^2>$. Thus, $f=f^2g$ for some $g\in C(X)_\mathcal{P}$. This shows that $(X,\tau,\mathcal{P})$ is a $\mathcal{P}P$-space. 
	\end{proof}
	
	\begin{remark}\label{rem1}
		The ring $T'(X)$ is a Von-Neumann Regular ring (See \cite{A2010}) and $T'(X)=C(X)_{\mathcal{P}_{nd}}$ (See \cite{DABM}). This ensures that each ideal of $T'(X)$ is a $z$-ideal.
		
	\end{remark}

	\subsection{$z^0$-ideals of $C(X)_\mathcal{P}$.
	}
	In what follows, we establish a result analogous to Proposition 2.2 of \cite{AKA1999}. We need the following results to achieve this.

	\begin{lemma} \label{l1.5}
		Let $f,g\in C(X)_\mathcal{P}$. Then $int_{X_\mathcal{P}}(Z_\mathcal{P}(f))\subseteq int_{X_\mathcal{P}}(Z_\mathcal{P}(g))$ if and only if $Ann(f)\subseteq Ann(g)$.
	\end{lemma}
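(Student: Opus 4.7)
The plan is to reduce the two-sided inclusion of interiors to a two-sided inclusion of cozero sets of annihilator elements, exploiting that $coz_\mathcal{P}[X]$ is a base for $\tau_\mathcal{P}$.

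The key preliminary observation I would record first is that for any $h,f\in C(X)_\mathcal{P}$, the following are equivalent: $hf=0$; $coz(h)\subseteq Z_\mathcal{P}(f)$; $coz(h)\subseteq int_{X_\mathcal{P}}(Z_\mathcal{P}(f))$. The first two equivalences are immediate from the definition of cozero sets. The third follows because $coz(h)$ is a member of the defining base $coz_\mathcal{P}[X]$, hence open in $X_\mathcal{P}$, so any containment in $Z_\mathcal{P}(f)$ automatically lands inside the $X_\mathcal{P}$-interior. This gives the clean description
\[
Ann(f)=\{h\in C(X)_\mathcal{P}\colon coz(h)\subseteq int_{X_\mathcal{P}}(Z_\mathcal{P}(f))\}.
\]

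For the forward direction, assuming $int_{X_\mathcal{P}}(Z_\mathcal{P}(f))\subseteq int_{X_\mathcal{P}}(Z_\mathcal{P}(g))$, I would take $h\in Ann(f)$, use the description above to conclude $coz(h)\subseteq int_{X_\mathcal{P}}(Z_\mathcal{P}(f))\subseteq int_{X_\mathcal{P}}(Z_\mathcal{P}(g))$, and hence $h\in Ann(g)$. This step is essentially a chase through the equivalences.

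For the converse, assuming $Ann(f)\subseteq Ann(g)$, pick $x\in int_{X_\mathcal{P}}(Z_\mathcal{P}(f))$. Since $coz_\mathcal{P}[X]$ is a base for $\tau_\mathcal{P}$, there is some $h\in C(X)_\mathcal{P}$ with $x\in coz(h)\subseteq int_{X_\mathcal{P}}(Z_\mathcal{P}(f))$. This yields $hf=0$, so $h\in Ann(f)\subseteq Ann(g)$, whence $coz(h)\subseteq Z_\mathcal{P}(g)$, and again openness of $coz(h)$ promotes this containment to $coz(h)\subseteq int_{X_\mathcal{P}}(Z_\mathcal{P}(g))$, putting $x$ there. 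The only subtle point, and really the only place the specific structure of $C(X)_\mathcal{P}$ (as opposed to $C(X)$) is used, is the invocation of $coz_\mathcal{P}[X]$ as a base for $\tau_\mathcal{P}$; once that is on the table, the argument parallels the classical proof of Proposition 2.2 in \cite{AKA1999}.
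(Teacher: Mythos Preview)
Your proof is correct. The paper actually states this lemma without proof, presumably regarding it as a routine adaptation of the classical $C(X)$ result (Proposition~2.2 in \cite{AKA1999}) once one knows that $coz_\mathcal{P}[X]$ is a base for $\tau_\mathcal{P}$; your argument is exactly the natural such adaptation and fills the gap cleanly.
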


	The following results are established in \cite{AKA2000} for an arbitrary reduced ring $R$.
	
	\begin{proposition} \label{p1.6}
		Let $I$ be a proper ideal in a reduced ring $R$. Then the following statements are equivalent.
		\begin{enumerate}
			\item $I$ is a $z^0$-ideal.
			\item $P(a)=P(b)$ and $b\in I$, imply that $a\in I$.
			\item $Ann(a)=Ann(b)$ and $b\in I$, imply that $a\in I$.
			\item $a\in I$ implies that $Ann(Ann(a))\subseteq I$.
			
		\end{enumerate}
	\end{proposition}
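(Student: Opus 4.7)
The plan is to establish the four equivalences through the cycle $(1)\Rightarrow(2)\Rightarrow(3)\Rightarrow(4)\Rightarrow(1)$, resting on a single central identity valid in any reduced ring: $P(a)=Ann(Ann(a))$ for every $a\in R$. Once this identity is in hand, conditions $(1)$ and $(4)$ become literally the same statement, and the remaining implications shrink to very short arguments, so the real work of the proof concentrates on this identity and on one algebraic trick needed for $(3)\Rightarrow(4)$.

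To prove $P(a)=Ann(Ann(a))$, I would use the standard characterization of minimal primes in a reduced ring, namely that a minimal prime $P$ contains $a$ if and only if $Ann(a)\not\subseteq P$. For $Ann(Ann(a))\subseteq P(a)$, take $x\in Ann(Ann(a))$ and a minimal prime $P\ni a$; then some $y\in Ann(a)\setminus P$ exists, and $xy=0\in P$ together with $y\notin P$ forces $x\in P$. For the reverse inclusion, given $x\in P(a)$ and any $y\in Ann(a)$, I would check that $xy$ lies in every minimal prime $P$ (if $a\in P$ then $x\in P(a)\subseteq P$, and if $a\notin P$ then from $ay=0\in P$ and primality $y\in P$); since $R$ is reduced, the intersection of all minimal primes is $\{0\}$, so $xy=0$, i.e.\ $x\in Ann(Ann(a))$.

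With the identity available, $(1)\Leftrightarrow(4)$ is immediate. For $(1)\Rightarrow(2)$, one line suffices: $a\in P(a)=P(b)\subseteq I$. For $(2)\Rightarrow(3)$, the hypothesis $Ann(a)=Ann(b)$ yields $Ann(Ann(a))=Ann(Ann(b))$, hence $P(a)=P(b)$ by the identity, and $(2)$ applies to give $a\in I$.

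The main obstacle will be $(3)\Rightarrow(4)$. Given $a\in I$ and $c\in Ann(Ann(a))$, I must produce an element of $I$ whose annihilator coincides with $Ann(c)$ so that $(3)$ can be invoked. The natural candidate is $ac\in I$. The inclusion $Ann(c)\subseteq Ann(ac)$ is trivial; for the reverse, if $xac=0$ then $xc\in Ann(a)$, and since $c\in Ann(Ann(a))$ gives $Ann(a)\subseteq Ann(c)$, we obtain $xc^{2}=0$, hence $(xc)^{2}=x(xc^{2})=0$, so reducedness forces $xc=0$. Thus $Ann(ac)=Ann(c)$, and $(3)$ applied to $ac\in I$ delivers $c\in I$, closing the cycle.
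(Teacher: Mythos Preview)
Your proof is correct. The paper does not supply its own proof of this proposition; it simply cites the result from \cite{AKA2000}. Your central identity $P(a)=Ann(Ann(a))$ is exactly the content of Lemma~\ref{l1.4}, which the paper likewise quotes from the same source (written there as $P(a)=\{b:Ann(a)\subseteq Ann(b)\}$, which is just another way of saying $Ann(Ann(a))$), and your derivation of it via the standard characterization of minimal primes in a reduced ring is sound. The cycle $(1)\Rightarrow(2)\Rightarrow(3)\Rightarrow(4)\Rightarrow(1)$ is carried out correctly; in particular the device in $(3)\Rightarrow(4)$ of comparing $c$ with $ac\in I$ and checking $Ann(ac)=Ann(c)$ (using reducedness to pass from $xc^{2}=0$ to $xc=0$) is exactly the right move. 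So your proposal gives a complete self-contained argument where the paper offers only a citation.
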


	\begin{lemma} \cite{AKA2000} \label{l1.4}
		Let $R$ be a reduced ring and $a\in R$. Then we have \[P(a)=\{b\in R\colon Ann(a)\subseteq Ann(b) \}. \]
	\end{lemma}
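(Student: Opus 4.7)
The plan is to prove both inclusions by appealing to a standard characterization of minimal prime ideals in reduced rings. The key input I will need is the classical fact: in a reduced ring $R$, a minimal prime ideal $P$ contains an element $c$ if and only if $Ann(c)\not\subseteq P$. One direction is immediate (if $c\notin P$ and $rc=0\in P$, then $r\in P$, giving $Ann(c)\subseteq P$ whenever $c\notin P$), while the converse requires the nontrivial fact that in a reduced ring every element of a minimal prime is annihilated by some element outside the prime. This is what I would either cite from a standard commutative algebra source or derive by localizing at the complement of $P$.

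For the inclusion $\{b\in R\colon Ann(a)\subseteq Ann(b)\}\subseteq P(a)$, I would fix $b$ with $Ann(a)\subseteq Ann(b)$ and pick an arbitrary minimal prime $P$ containing $a$. By the key fact, $Ann(a)\not\subseteq P$, so I can choose $r\in Ann(a)\setminus P$. Then $r\in Ann(b)\setminus P$, whence $Ann(b)\not\subseteq P$, which by the key fact again forces $b\in P$. Since $P$ was arbitrary among minimal primes containing $a$, we get $b\in P(a)$.

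For the reverse inclusion $P(a)\subseteq \{b\in R\colon Ann(a)\subseteq Ann(b)\}$, I would take $b\in P(a)$ and any $r\in Ann(a)$, and show $rb=0$. Since $R$ is reduced, the nilradical is zero, so it suffices to verify $rb\in P$ for every minimal prime $P$. Fix such a $P$: if $a\in P$, then $b\in P$ by the definition of $P(a)$, and hence $rb\in P$; if $a\notin P$, then $ra=0\in P$ together with primality forces $r\in P$, and again $rb\in P$. Thus $rb$ lies in every minimal prime, so $rb=0$ and $r\in Ann(b)$.

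The only subtle step is the characterization of membership in a minimal prime of a reduced ring in terms of annihilators; once that is in hand, the rest is a short formal manipulation exploiting reducedness and the definition of $P(a)$. Accordingly, I expect that input to be the main obstacle, and the body of the argument should read as straightforward bookkeeping.
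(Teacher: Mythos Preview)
Your argument is correct. The characterization of minimal primes in a reduced ring that you invoke---namely, that $c\in P$ for a minimal prime $P$ if and only if $Ann(c)\not\subseteq P$---is exactly the right tool, and both inclusions follow cleanly from it as you outline. The second inclusion, where you show $rb$ lies in every minimal prime and then use reducedness to conclude $rb=0$, is a nice touch that avoids needing the annihilator characterization a second time.

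As for comparison: the paper does not actually prove this lemma. It is quoted verbatim from \cite{AKA2000} (Azarpanah, Karamzadeh, and Aliabad, \emph{On ideals consisting entirely of zero divisors}) and used as a black box, so there is no in-paper argument to compare yours against. Your proof is essentially the standard one and would be a perfectly good self-contained justification; the only thing you might want to add for completeness is a one-line reference for the minimal-prime/annihilator fact (e.g., Huckaba \cite{H1988} or Matsumura), since you flag it yourself as the main input.
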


	Using above results, we characterize $z^0$-ideals of the reduced ring $C(X)_\mathcal{P}$.
	
	\begin{theorem} \label{t1.6}
		If $I$ is a proper ideal of $C(X)_\mathcal{P}$, then the following statements are equivalent.
		\begin{enumerate}
			\item $I$ is a $z^0$-ideal.
			\item $P(f)=P(g)$ and $g\in I$, imply that $f\in I$.
			\item $Ann(f)=Ann(g)$ and $g\in I$, imply that $f\in I$.
			\item $f\in I$ implies that $Ann(Ann(f))\subseteq I$.
			\item $int_{X_\mathcal{P}}(Z_\mathcal{P}(f))= int_{X_\mathcal{P}}(Z_\mathcal{P}(g))$ and $f\in I$, imply that $g\in I$.
			\item If $S\subseteq I$ is a finite set and $Ann(S)\subseteq Ann(g)$, then $g\in I$.
		
		\end{enumerate}
	\end{theorem}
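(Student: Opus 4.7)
The plan is to lean on Proposition \ref{p1.6}, which already delivers $(1)\Leftrightarrow(2)\Leftrightarrow(3)\Leftrightarrow(4)$ in any reduced ring, and then to bolt on $(5)$ and $(6)$ to this equivalence. Since $C(X)_\mathcal{P}$ is reduced, no extra work is needed for the first four conditions; it only remains to show $(3)\Leftrightarrow(5)$ and $(3)\Leftrightarrow(6)$ (or equivalently, $(4)\Leftrightarrow(6)$, whichever is more convenient).

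For $(3)\Leftrightarrow(5)$, I would simply invoke Lemma \ref{l1.5}. It states that $int_{X_\mathcal{P}}(Z_\mathcal{P}(f))\subseteq int_{X_\mathcal{P}}(Z_\mathcal{P}(g))$ iff $Ann(f)\subseteq Ann(g)$; applying this in both directions gives $int_{X_\mathcal{P}}(Z_\mathcal{P}(f))=int_{X_\mathcal{P}}(Z_\mathcal{P}(g))$ iff $Ann(f)=Ann(g)$, which makes $(3)$ and $(5)$ a verbatim translation of one another.

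For $(6)\Rightarrow(3)$, specialising $(6)$ to the singleton $S=\{g\}$ yields the implication immediately: $Ann(g)=Ann(f)\subseteq Ann(f)$ forces $f\in I$. The one implication that requires actual ring-theoretic input is $(4)\Rightarrow(6)$. Given a finite $S=\{f_1,\dots,f_n\}\subseteq I$ with $Ann(S)\subseteq Ann(g)$, I would form the element $h=f_1^2+\cdots+f_n^2\in I$ and verify the key identity
\[
Ann(h)=\bigcap_{i=1}^{n}Ann(f_i)=Ann(S).
\]
This is where the positivity of squares in $C(X)_\mathcal{P}$ is used: if $r\cdot h=0$ pointwise and some $f_i(x)\neq 0$, then $h(x)>0$ forces $r(x)=0$, so $r\cdot f_i=0$ for each $i$; the converse is trivial. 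Hence $Ann(h)\subseteq Ann(g)$, which by Lemma \ref{l1.4} (or the elementary fact that $Ann(Ann(\cdot))$ is monotone under reverse inclusion of $Ann$) gives $g\in Ann(Ann(h))$, and then $(4)$ applied to $h\in I$ yields $g\in Ann(Ann(h))\subseteq I$.

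The main (and really the only) obstacle is the equality $Ann(h)=Ann(S)$ above; everything else is formal manipulation between Proposition \ref{p1.6}, Lemma \ref{l1.5} and Lemma \ref{l1.4}. Once this sum-of-squares trick is in place the entire equivalence closes up cleanly.
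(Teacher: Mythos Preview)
Your proposal is correct and follows essentially the same route as the paper: invoke Proposition \ref{p1.6} for the equivalence of $(1)$--$(4)$, use Lemma \ref{l1.5} to identify $(3)$ with $(5)$, and handle $(6)$ via the sum-of-squares element $h=f_1^2+\cdots+f_n^2$ together with the singleton specialisation. The only cosmetic difference is that the paper closes the loop with $(6)\Rightarrow(4)$ rather than your $(6)\Rightarrow(3)$, and it asserts $Ann(S)=Ann(h)$ without the pointwise justification you supply.
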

	
	\begin{proof}
			It follows from Proposition \ref{p1.6} that statements $(1)$ to $(4)$ are equivalent. Also by Lemma \ref{l1.5}, Statements $(3)$ and $(5)$ are equivalent. We next show that $(4)$ implies $(6)$. We assume $(4)$ is true and let $Ann(S)\subseteq Ann(g)$, where $S\subseteq I$ is a finite set. Then $S=\{f_1,f_2,\cdots,f_n \}$, where $f_i\in I$ for all $i=1,2,\cdots,n$ for some $n\in \mathbb{N}$ which implies that $Ann(S)=Ann(k)$, where $k=f_1^2+f_2^2+\cdots +f_n^2\in I$. By $(4)$, $Ann(Ann(k))\subseteq I$. It is clear from $Ann(S)\subseteq Ann(g)$ that $g\in Ann(Ann(k))$. So $g\in I$. This proves $(6)$. Finally we assume that $(6)$ holds and let $f\in I$ and $g\in Ann(Ann(f))$. Then $Ann(f)\subseteq Ann(g)$. But $f\in I$ and hence by $(6)$, $g\in I$.
	\end{proof}
	
	The following definition can be found in \cite{H1988}.
	\begin{definition} \label{d1.7}
		A ring $R$ is said to satisfy property A if each finitely generated ideal of $R$ consisting of zero divisors has a non-zero annihilator.
	\end{definition}
	If $I=<f_1,f_2,\cdots, f_n>$ is a finitely generated ideal of $C(X)_\mathcal{P}$ consisting of zero divisors. Then it can be easily seen that $Ann(I)=Ann(f)$, where $f=f_1^2+f_2^2+\cdots f_n^2\in I$. This ensures that $C(X)_\mathcal{P}$ satisfies the property A. Thus any ideal $I$ of $C(X)_\mathcal{P}$ consisting of zero divisors is contained in a $z^0$-ideal (See Theorem 1.21 \cite{AKA2000}). It follows that a maximal ideal $M$ of $C(X)_\mathcal{P}$ consisting of zero divisors is a $z^0$-ideal. Also since arbitrary intersection of $z^0$-ideals of a ring is itself a $z^0$-ideal, if $I$ is an ideal of $C(X)_\mathcal{P}$ consisting of zero divisors, then there is a smallest $z^0$-ideal containing $I$.
	
	One must note that if $C(X)_\mathcal{P}$ contains all the characteristic functions $\chi_{\{{a}\}}$, $a\in X$ (when $\mathcal{P}$ contains all singleton subsets of $X$), each element of $C(X)_\mathcal{P}$ is either a unit or a zero divisor. Thus, in this situation, every maximal ideal of $C(X)_\mathcal{P}$ is a $z^0$-ideal of $C(X)_\mathcal{P}$. In particular every maximal ideal of the rings $C(X)_F$ (\cite{GGT2018}), $T'(X)$ (\cite{A2010} ) and $C(X)_K$ (\cite{DABM} ) are $z^0$-ideals of the respective rings.
	
	Let $I$ be a $z^0$-ideal and $P$ be a prime ideal of $C(X)_\mathcal{P}$ which is minimal over $I$. Then we can show that $P$ is a $z^0$-ideal of $C(X)_\mathcal{P}$. To see this, let $int_{X_\mathcal{P}}Z_\mathcal{P}(f)=int_{X_\mathcal{P}}Z_\mathcal{P}(g)$, where $f\in I$ and $g\in C(X)_\mathcal{P}$. If $f\in I$, then by Theorem \ref{t1.6} $g\in I$. If possible, let $f\in P\setminus I$ and $g\notin P$. We define $S=(C(X)_\mathcal{P})\cup \{hf^n\colon h\notin P, n\in \mathbb{N} \}$. Then $S$ is closed under multiplications and is disjoint from $I$, since if $hf^n\in I$ for some $h\notin P$ and $n\in \mathbb{N}$, then $hg\in I$ which is not possible. Therefore there exists a prime ideal $P_1$ of $C(X)_\mathcal{P}$ containing $I$ and disjoint from $S$. It follows that $P_1\subsetneq P$ which contradicts that $P$ is a prime ideal minimal over $I$. Hence a minimal prime ideal of $C(X)_\mathcal{P}$ is a $z^0$-ideal of $C(X)_\mathcal{P}$.
	
	Moreover, it can be easily seen that if $S$ is a subset of $C(X)_\mathcal{P}$ which is regular closed in $X_\mathcal{P}$, that is $cl_{X_\mathcal{P}}(int_{X_\mathcal{P}}(S))=S$, then $M_S=\{f\in C(X)_\mathcal{P}\colon S\subseteq Z_\mathcal{P}(f) \}$ is a $z^0$-ideal of $C(X)_\mathcal{P}$. To show this, it is sufficient to show that $S\subseteq Z_\mathcal{P}(g)$ when $int_{X_\mathcal{P}}(Z_\mathcal{P}(f))= int_{X_\mathcal{P}}(Z_\mathcal{P}(g))$ with $f\in I$. If not, then there exists $x\in S\cap coz_\mathcal{P}(g)$. Since $coz_\mathcal{P}(g)$ is a neighbourhood of $x$ in $X_\mathcal{P}$ and $S$ is regular closed in $X_\mathcal{P}$, there exists $t\in coz_\mathcal{P}(g)\cap int_\mathcal{P}S$ and $int_\mathcal{P} S\subseteq int_{X_\mathcal{P}}(Z_\mathcal{P}(f))= int_{X_\mathcal{P}}(Z_\mathcal{P}(g))\subseteq Z_\mathcal{P}(g)$. Thus $t\in Z_\mathcal{P}(g)$ which is a contradiction.
	
	Another easy yet interesting observation is that every $z^0$-ideal of $C(X)_\mathcal{P}$ is a $z$-ideal. However the converse may not be true. Consider $X=[0,1]\cup \{2\}$ endowed with the subspace topology of the real line and $\mathcal{P}=\{ \{2\},\emptyset \}$. Then $C(X)_\mathcal{P}=C(X)$ and $M_0=\{f\in C(X)_\mathcal{P}\colon f(0)=0 \}$ is a $z$-ideal of $C(X)_\mathcal{P}$ which is not a $z^0$-ideal. This makes us ask the following question: When is every $z$-ideal of $C(X)_\mathcal{P}$ a $z^0$-ideal? To answer this, we introduce the notion of almost $\mathcal{P}P$-spaces.
	
	\begin{definition} \label{d1.8}
		A $\tau \mathcal{P}$-space $(X,\tau,\mathcal{P})$ is said to be an almost $\mathcal{P}P$-space if the zero set of every non-unit member of $C(X)_\mathcal{P}$ has non-empty interior in $X_\mathcal{P}$. That is whenever $Z_\mathcal{P}(f)\neq \emptyset$ for some $f\in C(X)_\mathcal{P}$, $int_{X_\mathcal{P}}(Z_\mathcal{P}(f))\neq \emptyset$. 
	\end{definition}
	It can be easily seen from the following lemma that a $\tau \mathcal{P}$-space $(X,\tau,\mathcal{P})$ is an almost $\mathcal{P}P$-space if and only if for every non-unit $f\in C(X)_\mathcal{P}$, $Z_\mathcal{P}(f)$ is a regular closed set in $X_\mathcal{P}$ (i.e. $cl_{X_\mathcal{P}}(int_{X_\mathcal{P}}(Z_\mathcal{P}(f)))=Z_\mathcal{P}(f)$).
	\begin{lemma} \label{l1}
		Every neighbourhood of a point $x\in X_\mathcal{P}$ contains a zero set neighbourhood  of $x$ in $X_\mathcal{P}$. In other words if $x\in U$, where $U$ is open in $X_\mathcal{P}$, there exists $Z\in Z_\mathcal{P}[X]$ such that $x\in int_{X_\mathcal{P}}Z\subseteq Z\subseteq U$. 
	\end{lemma}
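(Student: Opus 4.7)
The plan is to mimic the classical Tychonoff-space construction, only verifying at each step that the truncated functions still live in $C(X)_\mathcal{P}$. Start from the fact, used in Section 1, that $coz_\mathcal{P}[X]$ is a base for $\tau_\mathcal{P}$: given $x\in U$ with $U$ open in $X_\mathcal{P}$, choose $f\in C(X)_\mathcal{P}$ with $x\in coz(f)\subseteq U$, and set $r=|f(x)|>0$.

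Next, I would form the two standard truncations
\[
 g=\bigl(\tfrac{r}{2}-|f|\bigr)\vee 0,\qquad h=\bigl(|f|-\tfrac{r}{2}\bigr)\vee 0.
\]
Both $g$ and $h$ are compositions of $f$ with globally continuous real functions, so $D_g\cup D_h\subseteq D_f$; hence $\overline{D_g},\overline{D_h}\subseteq \overline{D_f}\in\mathcal{P}$, placing $g,h\in C(X)_\mathcal{P}$. This membership is exactly where one must be careful, but the truncation-preserves-discontinuity-set argument takes care of it in a single line.

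Now I would identify
\[
 Z_\mathcal{P}(g)=\{y\in X:|f(y)|\ge r/2\},\qquad coz(h)=\{y\in X:|f(y)|>r/2\}.
\]
The containment chain is then immediate: $x\in coz(h)$ since $|f(x)|=r>r/2$; $coz(h)\subseteq Z_\mathcal{P}(g)$ since $|f(y)|>r/2$ forces $g(y)=0$; and $Z_\mathcal{P}(g)\subseteq coz(f)\subseteq U$ since $|f(y)|\ge r/2>0$ rules out $f(y)=0$. Because $coz(h)$ is a basic open set of $X_\mathcal{P}$, it witnesses $x\in int_{X_\mathcal{P}}Z_\mathcal{P}(g)$, and $Z_\mathcal{P}(g)\in Z_\mathcal{P}[X]$ is the desired zero-set neighbourhood of $x$ lying inside $U$.

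The only real obstacle is the check $g,h\in C(X)_\mathcal{P}$, which could in principle fail if taking max with a constant somehow enlarged the discontinuity set; the one-line verification $D_{(|f|-c)\vee 0}\subseteq D_f$ settles this and lets the classical argument go through unchanged.
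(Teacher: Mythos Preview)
Your argument is correct and follows essentially the same route as the paper's own proof. The paper also starts from a basic cozero set $coz(g_1)\subseteq U$ (normalized so that $g_1(x)=\tfrac{3}{2}$), takes the single truncation $g=(\boldsymbol{1}-g_1)\vee \boldsymbol{0}\in C(X)_\mathcal{P}$, and reads off $x\in\{y:g_1(y)>1\}\subseteq Z_\mathcal{P}(g)=\{y:g_1(y)\ge 1\}\subseteq coz(g_1)\subseteq U$; your version just keeps the original $f$, works with $|f|$ and the threshold $r/2$, and is a bit more explicit about why the truncations remain in $C(X)_\mathcal{P}$.
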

	\begin{proof}
		Let $x\in U$, where $U$ is open in $X_\mathcal{P}$. Then there exists a $g_1\in C(X)_\mathcal{P}$ such that $x\in coz(g_1)\subseteq U$ and $g_1(x)=\{\frac{3}{2} \}$. Define $g=(\boldsymbol{1}-g_1)\vee \{ \boldsymbol{0} \}\in C(X)_\mathcal{P}$. Then $x\in \{y\in X\colon g_1(y)>1 \}\subseteq \{y\in X\colon g_1(y)\geq1 \}=Z_\mathcal{P}(g) $. So $x\in int_{X_\mathcal{P}}Z_\mathcal{P}(g)\subseteq Z_\mathcal{P}(g)\subseteq U$.
	\end{proof}

	\begin{lemma} \label{l0}
		A function $f\in C(X)_\mathcal{P}\setminus \{\boldsymbol{0} \}$ is a regular element (for a commutative ring $R$ with unity, an element $a\in R\setminus \{0\}$ is said to be a regular element if it is not a zero divisor) if and only if $int_{X_\mathcal{P}}(Z_\mathcal{P}(f))= \emptyset$.
	\end{lemma}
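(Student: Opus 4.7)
The plan is to prove both directions by combining two elementary facts: first, that $coz_\mathcal{P}[X]=\{coz(h):h\in C(X)_\mathcal{P}\}$ is, by definition, a base for the topology $\tau_\mathcal{P}$ on $X$; and second, the pointwise reformulation $fg=\boldsymbol{0}\iff coz(g)\subseteq Z_\mathcal{P}(f)$. Together these translate the algebraic condition ``no non-zero annihilator'' into the topological condition ``no non-empty open subset of $Z_\mathcal{P}(f)$''.

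For the forward direction I would argue the contrapositive. Assume $int_{X_\mathcal{P}}(Z_\mathcal{P}(f))\neq\emptyset$ and pick any $x$ in this interior. Since $coz_\mathcal{P}[X]$ is a base for $X_\mathcal{P}$, there exists $h\in C(X)_\mathcal{P}$ with
\[
x\in coz(h)\subseteq int_{X_\mathcal{P}}(Z_\mathcal{P}(f))\subseteq Z_\mathcal{P}(f).
\]
Then $h(x)\neq 0$, so $h\neq\boldsymbol{0}$, while pointwise $(fh)(y)=0$ for every $y\in X$ (either $y\in coz(h)$ where $f(y)=0$, or $y\notin coz(h)$ where $h(y)=0$). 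Thus $fh=\boldsymbol{0}$ with $h\neq\boldsymbol{0}$, witnessing that $f$ is a zero divisor.

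For the converse, suppose $int_{X_\mathcal{P}}(Z_\mathcal{P}(f))=\emptyset$ and let $g\in C(X)_\mathcal{P}$ satisfy $fg=\boldsymbol{0}$. Then $coz(g)\subseteq Z_\mathcal{P}(f)$, and because $coz(g)$ is a basic open set of $X_\mathcal{P}$ it must lie in the interior; hence $coz(g)\subseteq int_{X_\mathcal{P}}(Z_\mathcal{P}(f))=\emptyset$, forcing $g=\boldsymbol{0}$. So $f$ is not a zero divisor.

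There is no real obstacle here; the only subtlety worth flagging is that one must use the interior taken in $X_\mathcal{P}$ rather than in the original topology $\tau$, since it is only in $X_\mathcal{P}$ that cozero sets of members of $C(X)_\mathcal{P}$ form a base and therefore that ``non-empty open subset of $Z_\mathcal{P}(f)$'' is equivalent to ``cozero set of some non-zero element of $C(X)_\mathcal{P}$ contained in $Z_\mathcal{P}(f)$''.
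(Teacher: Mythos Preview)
Your argument is correct and is essentially the same as the paper's: the paper compresses it into the single chain of equivalences ``$f$ is a zero divisor $\iff fg=\boldsymbol{0}$ for some $g\neq\boldsymbol{0}$ $\iff \emptyset\neq coz(g)\subseteq Z_\mathcal{P}(f)$'', relying (as you do) on the fact that $coz_\mathcal{P}[X]$ is a base for $\tau_\mathcal{P}$. Your version simply unpacks both directions explicitly.
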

	\begin{proof}
		This follows from the fact that $f\in C(X)_\mathcal{P}\setminus \{\boldsymbol{0}\}$ is a zero divisor if and only if $fg=\boldsymbol{0}$ for some $g\in C(X)_\mathcal{P}\setminus \{\boldsymbol{0}\}$ if and only if $\emptyset \neq coz(g)\subseteq Z_\mathcal{P}(f)$.
	\end{proof}

	It follows from the above lemma and the definition of almost $\mathcal{P}P$-spaces that a $\tau_\mathcal{P}$-space $(X,\tau,\mathcal{P})$ is an almost $\mathcal{P}P$-space if and only if each element of $C(X)_\mathcal{P}$ is either a unit or a zero-divisor. Such a ring is said to be a classical ring or an almost regular ring.
	
	Clearly, if $C(X)_\mathcal{P}$ contains all characteristic functions of the form $\chi_{\{{a}\}}$ for all $a\in X$, then it is a classical ring and thus in this case $(X,\tau,\mathcal{P})$ is an almost $\mathcal{P}P$-space. In particular $(X,\tau,\mathcal{P}_f)$ ($C(X)_{\mathcal{P}_f}=C(X)_F$ \cite{GGT2018}), $(X,\tau,\mathcal{P}_{nd})$ ($C(X)_{\mathcal{P}_{nd}}=T'(X)$ \cite{A2010}) and $(X,\tau,\mathcal{K})$ ($C(X)_{\mathcal{K}}=C(X)_K$ \cite{DABM}) are examples of an almost $\mathcal{P}P$-spaces.
	
	We establish another lemma which will help us to study almost $\mathcal{P}P$-spaces.
	\begin{lemma} \label{l2}
		Two sets $A$ and $B$ are $\mathcal{P}$-completely separated in $X$ if and only if they are contained in disjoint zero sets. In fact $\mathcal{P}$-completely separated sets in $X$ are contained in disjoint zero set neighbourhoods in $X_\mathcal{P}$.
	\end{lemma}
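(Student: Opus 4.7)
The plan is to treat the lemma as a triple statement: the forward implication, the converse, and the strengthened ``in fact'' part about neighbourhoods in $X_\mathcal{P}$. Throughout I will use that two sets are $\mathcal{P}$-completely separated when there exists $f\in C(X)_\mathcal{P}$ with $f(A)=\{0\}$ and $f(B)=\{1\}$, and will freely replace $f$ by $(f\vee \boldsymbol{0})\wedge \boldsymbol{1}$ to assume $0\le f\le 1$, since this operation preserves membership in $C(X)_\mathcal{P}$ (the discontinuity set only shrinks).

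For the forward implication, given such an $f$ I will simply note that $A\subseteq Z_\mathcal{P}(f)$ and $B\subseteq Z_\mathcal{P}(f-\boldsymbol{1})$, and these two zero sets are automatically disjoint because a function cannot take the values $0$ and $1$ at the same point. For the converse, starting with disjoint $Z_\mathcal{P}(f_1),Z_\mathcal{P}(f_2)$ containing $A$ and $B$ respectively, I will build
\[
g=\frac{|f_1|}{|f_1|+|f_2|},
\]
which is well defined because the denominator never vanishes, and which lies in $C(X)_\mathcal{P}$ since $D_g\subseteq \overline{D_{f_1}}\cup \overline{D_{f_2}}\in \mathcal{P}$. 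A direct check gives $g(A)=\{0\}$ and $g(B)=\{1\}$, completing the equivalence.

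For the ``in fact'' part I will sharpen the first implication. Starting from $f\in C(X)_\mathcal{P}$ with $0\le f\le 1$, $f(A)=\{0\}$, $f(B)=\{1\}$, I define
\[
g_1=\bigl(f-\tfrac{1}{3}\boldsymbol{1}\bigr)\vee \boldsymbol{0},\qquad g_2=\bigl(\tfrac{2}{3}\boldsymbol{1}-f\bigr)\vee \boldsymbol{0},
\]
both of which sit in $C(X)_\mathcal{P}$. Their zero sets $\{x:f(x)\le 1/3\}$ and $\{x:f(x)\ge 2/3\}$ are disjoint and contain $A$ and $B$ respectively. The key observation is that $\{x:f(x)<1/3\}=\mathrm{coz}\bigl((\tfrac{1}{3}\boldsymbol{1}-f)\vee \boldsymbol{0}\bigr)$ and $\{x:f(x)>2/3\}=\mathrm{coz}\bigl((f-\tfrac{2}{3}\boldsymbol{1})\vee \boldsymbol{0}\bigr)$ are cozero sets of members of $C(X)_\mathcal{P}$, hence open in $X_\mathcal{P}$ by the very definition of the topology $\tau_\mathcal{P}$. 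Since these open sets contain $A$ and $B$ and sit inside $Z_\mathcal{P}(g_1)$ and $Z_\mathcal{P}(g_2)$ respectively, they witness $A\subseteq int_{X_\mathcal{P}}Z_\mathcal{P}(g_1)$ and $B\subseteq int_{X_\mathcal{P}}Z_\mathcal{P}(g_2)$.

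The only real obstacle is bookkeeping about which combinations of $C(X)_\mathcal{P}$-functions stay in $C(X)_\mathcal{P}$; this is handled uniformly by the fact that $\overline{D_{\varphi(f_1,\ldots,f_n)}}\subseteq \overline{D_{f_1}}\cup\cdots\cup\overline{D_{f_n}}$ for any combination $\varphi$ built from continuous operations such as $+,-,\cdot,\vee,\wedge,|\cdot|$ and division by a non-vanishing function, and $\mathcal{P}$ is closed under finite unions. No step beyond this observation and Lemma~\ref{l1} is required.
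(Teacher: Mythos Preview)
Your proof is correct and is precisely the adaptation of Theorem~1.15 of Gillman--Jerison to $C(X)_\mathcal{P}$ that the paper invokes; the authors give no details beyond that reference, so your argument fills in exactly what they intended. The closing mention of Lemma~\ref{l1} is harmless but unnecessary, since your explicit $g_1,g_2$ construction already produces the zero-set neighbourhoods directly.
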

	\begin{proof}
		This can be proved by closely following the proof of Theorem 1.15 in \cite{GJ1976}.
		
	\end{proof}
	The following theorem characterizes an almost $\mathcal{P}P$-space algebraically.
	
	\begin{theorem} \label{t1.9}
		The following statements are equivalent for a $\tau \mathcal{P}$-space $(X,\tau,\mathcal{P})$.
		\begin{enumerate}
			\item $X$ is an almost $\mathcal{P}P$-space.
			\item Every $z$-ideal in $C(X)_\mathcal{P}$ is a $z^0$-ideal.
			\item Every maximal ideal in $C(X)_\mathcal{P}$ is a $z^0$-ideal.
			\item Every maximal ideal in $C(X)_\mathcal{P}$ consists entirely of zero divisors.
			\item The sum of any two ideals consisting of zero divisors is either $C(X)_\mathcal{P}$ or consists of zero divisors.
			\item For each non-unit element $f\in C(X)_\mathcal{P}$, there exists a non-zero $g\in C(X)_\mathcal{P}$ such that $P(f)\subseteq Ann(g)$.
		\end{enumerate}
	\end{theorem}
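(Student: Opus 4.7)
I would establish the six equivalences in two stages: first prove the cycle $(1)\Rightarrow(2)\Rightarrow(3)\Rightarrow(4)\Rightarrow(1)$ using the structural results of the preceding subsection, then derive $(1)\Leftrightarrow(6)$ and $(1)\Leftrightarrow(5)$ separately. Throughout I use the regular closedness characterization of almost $\mathcal{P}P$-spaces noted just after Definition \ref{d1.8}, the equivalents of being a $z^0$-ideal collected in Theorem \ref{t1.6}, and Lemma \ref{l0}.

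For $(1)\Rightarrow(2)$: given a $z$-ideal $I$, $f\in I$, and $g\in C(X)_\mathcal{P}$ with $int_{X_\mathcal{P}}Z_\mathcal{P}(f)=int_{X_\mathcal{P}}Z_\mathcal{P}(g)$, assuming $I$ proper, both $f$ and $g$ are non-units; by regular closedness both zero sets coincide with $cl_{X_\mathcal{P}}$ of the common interior, whence $Z_\mathcal{P}(f)=Z_\mathcal{P}(g)$ and $g\in I$, so $I$ is a $z^0$-ideal by Theorem \ref{t1.6}(5). $(2)\Rightarrow(3)$ is immediate, since every maximal ideal of $C(X)_\mathcal{P}$ is a $z_\mathcal{P}$-ideal (as used in the proof of Theorem \ref{t1.3}) and hence a $z$-ideal. $(3)\Rightarrow(4)$: if a maximal $z^0$-ideal $M$ contained a regular element $f$, Lemma \ref{l0} would give $Ann(f)=\{0\}=Ann(\boldsymbol{1})$, so Theorem \ref{t1.6}(3) would force $\boldsymbol{1}\in M$, absurd. $(4)\Rightarrow(1)$: every non-unit lies in some maximal ideal and is therefore a zero divisor, so $int_{X_\mathcal{P}}Z_\mathcal{P}(f)\neq\emptyset$ by Lemma \ref{l0}.

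For $(1)\Leftrightarrow(6)$: Lemma \ref{l1.4} gives $P(f)=\{h:Ann(f)\subseteq Ann(h)\}$, and since $f\in P(f)$ one immediately computes $Ann(P(f))=Ann(f)$. Hence the existence of a nonzero $g$ with $P(f)\subseteq Ann(g)$ is equivalent to $Ann(f)\neq\{0\}$, i.e., to $f$ being a zero divisor; thus (6) is exactly the statement that every non-unit of $C(X)_\mathcal{P}$ is a zero divisor, which by Lemma \ref{l0} is (1).

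Finally, for $(1)\Leftrightarrow(5)$: the direction $(1)\Rightarrow(5)$ is routine, since any proper sum of ideals of zero divisors lies in a maximal ideal that by (4) consists of zero divisors. The converse $(5)\Rightarrow(1)$ is the main obstacle; I plan to argue by contrapositive. Given a non-unit regular $f$ (so $|X|\geq 2$), fix $p\in Z_\mathcal{P}(f)$ and $q\in coz(f)$ with $q\neq p$, and apply Lemma \ref{l1} to the $X_\mathcal{P}$-open set $X\setminus\{q\}$ to obtain $g\in C(X)_\mathcal{P}$ with $p\in int_{X_\mathcal{P}}Z_\mathcal{P}(g)\subseteq Z_\mathcal{P}(g)\subseteq X\setminus\{q\}$; then form $\phi=\min(|g|/\epsilon,\boldsymbol{1})\in C(X)_\mathcal{P}$ with $\epsilon=|g(q)|/2$. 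The decomposition $f=f\phi+f(\boldsymbol{1}-\phi)$ splits $f$ into $h_1,h_2$ with $h_1$ vanishing on $int_{X_\mathcal{P}}Z_\mathcal{P}(g)\ni p$ and $h_2$ vanishing on the $X_\mathcal{P}$-open set $\{|g|>\epsilon\}\ni q$, so both have non-empty $X_\mathcal{P}$-interior zero set and are zero divisors by Lemma \ref{l0}; since $h_1(p)=h_2(p)=0$, we have $(h_1)+(h_2)\subseteq M_p$ proper, yet containing the regular element $f=h_1+h_2$, contradicting (5). The delicate part is precisely engineering $\phi$ so that both $\phi$ and $\boldsymbol{1}-\phi$ vanish on non-empty $X_\mathcal{P}$-open sets from only the minimal separation data supplied by Lemma \ref{l1}.
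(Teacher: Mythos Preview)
Your argument is correct. The logical skeleton and most implications coincide with the paper's: the paper runs the cycle $(1)\Rightarrow(2)\Rightarrow(3)\Rightarrow(4)\Rightarrow(5)\Rightarrow(1)$ and then $(1)\Rightarrow(6)\Rightarrow(1)$, whereas you close the loop at $(4)\Rightarrow(1)$ and treat $(5)$ and $(6)$ as separate equivalences with $(1)$; the content of the shared implications is the same.

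The one genuine difference is the argument for $(5)\Rightarrow(1)$. The paper works directly: assuming $(5)$ and given a non-unit $f$, it picks two distinct points $x,y\in coz(f)$ (handling $|coz(f)|\le 1$ separately) and uses Lemma~\ref{l1} twice to manufacture non-negative $g,h\in C(X)_\mathcal{P}$ with $Z_\mathcal{P}(g),Z_\mathcal{P}(h)$ disjoint and both contained in $coz(f)$; then $fg\cdot fh=0$, the ideals $\langle fg\rangle,\langle fh\rangle$ consist of zero divisors and lie in $\langle f\rangle$, and $(5)$ forces $f(g+h)$ to be a zero divisor while $Z_\mathcal{P}(f(g+h))=Z_\mathcal{P}(f)$. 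Your contrapositive route instead splits $f=f\phi+f(1-\phi)$ via a single bump $\phi$; this is equally valid, but note that to invoke Lemma~\ref{l0} you should check $h_1,h_2\neq 0$. For $h_1$ this is immediate from $h_1(q)=f(q)\neq 0$; for $h_2$ you need that $coz(f)$ meets the open set $\{|g|<\epsilon\}$, which follows because $f$ regular means $coz(f)$ is $X_\mathcal{P}$-dense. The paper's construction sidesteps this density observation at the cost of a small case analysis on $|coz(f)|$; your construction avoids the case split but needs the density remark. Either way the step is short.
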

	\begin{proof}
		$(1)\implies(2)$ follows from Theorem \ref{t1.3}, Theorem \ref{t1.6} and the discussion following Definition \ref{d1.8}. $(2)\implies (3)$ is evident from the fact that all maximal ideals are $z$-ideals. It is clear from the definition of $z^0$-ideals that a $z^0$-ideal consists entirely of zero divisors. This proves $(3)\implies (4)$. $(4)\implies (5)$ is obvious. In order to show that $(5)\implies (1)$, let $f\in C(X)_\mathcal{P}$ be a non-unit element. We need to show that $int_{X_\mathcal{P}}(Z_\mathcal{P}(f))\neq \emptyset$. If $Z_\mathcal{P}(f)=X$ or $X\setminus \{x\}$ for some $x\in X$, then we are done. Let $x,y\notin Z_\mathcal{P}(f)$ such that $x\neq y$. Then $x\notin Z_\mathcal{P}(f)\cup \{y\}$. By Lemma \ref{l1} there exists $g\in C(X)_\mathcal{P}$ such that $g\geq \boldsymbol{0}$ and $x\in int_{X_\mathcal{P}}(Z_\mathcal{P}(g))\subseteq Z_\mathcal{P}(g)\subseteq X\setminus (Z_\mathcal{P}(f)\cup \{y\})$. Again $y\notin Z_\mathcal{P}(f)\cup Z_\mathcal{P}(g)$ implies that there exists $h\in C(X)_\mathcal{P}$ such that $h\geq \boldsymbol{0}$ and $x\in int_{X_\mathcal{P}}(Z_\mathcal{P}(h))\subseteq Z_\mathcal{P}(h)\subseteq X\setminus (Z_\mathcal{P}(f)\cup Z_\mathcal{P}(g))$. Thus $fg$ and $fh$ are non-zero elements in $C(X)_\mathcal{P}$ such that $fgh=\boldsymbol{0}$. Therefore the ideals $<fg>$ and $<fh>$ are non-zero ideals consisting of zero divisors. Since $f$ is not a unit, $<fg>+<fh>\neq C(X)_\mathcal{P}$. By $(5)$, $fg+fh$ is a zero divisor. So $int_{X_\mathcal{P}}(Z_\mathcal{P}(f))=int_{X_\mathcal{P}}(Z_\mathcal{P}(fg+fh))\neq \emptyset$. This proves $(1)$. To show that $(1)\implies (6)$, let $f\in C(X)_\mathcal{P}$ be a non-unit element. Then by $(1)$, there exists $g\in C(X)_\mathcal{P}\setminus \{\boldsymbol{0} \}$ such that $fg=\boldsymbol{0}$. Thus $g\in Ann(f)$. It follows from Lemma \ref{l1.4} that $P(f)\subseteq Ann(g)$. Finally we show that $(6)\implies (1)$. Let $f\in C(X)_\mathcal{P}$ be a non-unit element. We need to show that $int_{X_\mathcal{P}}(Z_\mathcal{P}(f))\neq \emptyset$. It is enough to show that $f$ is a zero divisor (by Lemma \ref{l0}). By $(6)$, $P(f)\subseteq Ann(g)$ where $g$ is a non-zero element in $C(X)_\mathcal{P}$. This ensures that $fg=\boldsymbol{0}$.
	\end{proof}
	
	\subsection{Essential ideals of $C(X)_\mathcal{P}$.}
	
	We characterise essential ideals of $C(X)_\mathcal{P}$ by using Lemma \ref{l3.0} and the space $X_\mathcal{P}$. The following theorem works as an extension to Theorem 3.1 of \cite{A1995}.
	
	\begin{theorem} \label{t3}
		For a non-zero ideal $E$ of $C(X)_\mathcal{P}$, the following are equivalent.
		\begin{enumerate}
			\item $E$ intersects every non-zero $z$-ideal of $C(X)_\mathcal{P}$ non-trivially.
			\item $E$ is an essential ideal of $C(X)_\mathcal{P}$.
			\item $int_{X_\mathcal{P}} \bigcap Z_\mathcal{P} [E]=\emptyset $ (that is, $\bigcap Z_\mathcal{P} [E]$ is nowhere dense in $X_\mathcal{P}$ ).
		\end{enumerate}
	\end{theorem}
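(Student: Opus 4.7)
The plan is to establish $(2) \Leftrightarrow (3)$ directly via Lemma \ref{l3.0} and close the remaining loop by proving $(2) \Rightarrow (1) \Rightarrow (2)$. The implication $(2) \Rightarrow (1)$ is immediate because every non-zero $z$-ideal is in particular a non-zero ideal.

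For $(1) \Rightarrow (2)$, I would take an arbitrary non-zero ideal $I$ of $C(X)_\mathcal{P}$, pick a non-zero $f \in I$, and form
\[
J_f = \{g \in C(X)_\mathcal{P} : Z_\mathcal{P}(f) \subseteq Z_\mathcal{P}(g)\}.
\]
A short check shows $J_f$ is an ideal (addition uses $Z_\mathcal{P}(g_1+g_2) \supseteq Z_\mathcal{P}(g_1) \cap Z_\mathcal{P}(g_2)$; multiplicative absorption is immediate), it is manifestly a $z$-ideal (in the sense of Definition \ref{d1.1}), and it is non-zero since $f \in J_f$. Applying (1) to $J_f$ produces some $0 \neq g \in E \cap J_f$. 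Then $fg$ lies in $E \cap I$, and I would argue $fg \neq \boldsymbol{0}$ by noting that $Z_\mathcal{P}(f) \subseteq Z_\mathcal{P}(g)$ is equivalent to $coz(g) \subseteq coz(f)$, so any $x \in coz(g)$ satisfies $(fg)(x) \neq 0$.

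For $(2) \Leftrightarrow (3)$, I would invoke Lemma \ref{l3.0} to reduce essentiality of $E$ to $Ann(E) = \{0\}$ and then re-express $Ann(E)$ set-theoretically. The key observation is that for any $h \in C(X)_\mathcal{P}$, $hf = \boldsymbol{0}$ is equivalent to $coz(h) \cap coz(f) = \emptyset$, i.e., $coz(h) \subseteq Z_\mathcal{P}(f)$; consequently $h \in Ann(E)$ iff $coz(h) \subseteq \bigcap Z_\mathcal{P}[E]$. Since $coz_\mathcal{P}[X]$ is a base for $\tau_\mathcal{P}$, the existence of a non-zero such $h$ is equivalent to $\bigcap Z_\mathcal{P}[E]$ containing a non-empty basic open set, i.e., to $int_{X_\mathcal{P}} \bigcap Z_\mathcal{P}[E] \neq \emptyset$. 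Contrapositively, $Ann(E) = \{0\} \Leftrightarrow (3)$, which closes the equivalence through Lemma \ref{l3.0}.

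The one subtle point will be the sign-check in $(1) \Rightarrow (2)$: one must keep straight that the defining inclusion for $J_f$ flips to $coz(g) \subseteq coz(f)$ when passing to cozero sets, which is exactly what forces $fg \neq \boldsymbol{0}$ rather than $fg = \boldsymbol{0}$. Beyond that, the proof is a direct unwinding of definitions together with Lemma \ref{l3.0} and the fact that cozero sets of functions in $C(X)_\mathcal{P}$ form a base for $\tau_\mathcal{P}$, so I do not anticipate any deeper obstacle.
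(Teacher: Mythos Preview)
Your proposal is correct and follows essentially the same route as the paper: for $(1)\Rightarrow(2)$ both arguments build an auxiliary $z$-ideal from the arbitrary ideal $I$ and then multiply back into $E\cap I$ (the paper uses $I_z=Z_\mathcal{P}^{-1}(Z_\mathcal{P}[I])$ where you use the principal $z$-ideal $J_f$), and for $(2)\Leftrightarrow(3)$ both reduce to the cozero-set/annihilator correspondence together with Lemma~\ref{l3.0} and the fact that $coz_\mathcal{P}[X]$ is a base for $\tau_\mathcal{P}$. The only cosmetic difference is that the paper proves $(3)\Rightarrow(2)$ by directly exhibiting an element of $E\cap I$, whereas you pass through $Ann(E)=\{0\}$ on both sides; neither choice changes the substance.
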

	\begin{proof}
		(1)$\iff $(2): Let $(1)$ hold and let $I$ be a non-zero ideal of $C(X)_\mathcal{P}$. Then $I_z=Z_\mathcal{P}^{-1}(Z_\mathcal{P}[I])$ is a $z$-ideal of $C(X)_\mathcal{P}$. By (1), there exists $f\in E\cap I_z$. So there exists $g\in I$ such that $Z_\mathcal{P}(f)=Z_\mathcal{P}(g)$. Thus $0\neq fg\in I\cap E$. The converse is obvious
		
		(2)$\iff $(3): Let $ int_{X_\mathcal{P}} \bigcap Z_\mathcal{P} [E]\neq \emptyset $. Then there exists $x_0\in  int_{X_\mathcal{P}} \bigcap Z_\mathcal{P} [E]$, which is open in $X_\mathcal{P}$. So there exists $g\in C(X)_\mathcal{P}$ such that $x_0\in coz(g)\subseteq int_{X_\mathcal{P}} \bigcap Z_\mathcal{P} [E] \subseteq  \bigcap Z_\mathcal{P} [E]$. So for every $f\in E$, $fg=\boldsymbol{0}$ which implies that $\boldsymbol{0}\neq g\in Ann(E)$. By Lemma \ref{l3.0}, $E$ is not an essential ideal of $C(X)_\mathcal{P}$. Conversely, let $ int_{X_\mathcal{P}} \bigcap Z_\mathcal{P} [E]= \emptyset $ and let $I$ be a non-zero ideal of $C(X)_\mathcal{P}$. Then there exists $g\in I\setminus \{\boldsymbol{0} \}$. So $coz(g)\nsubseteq \bigcap Z_\mathcal{P} [E]$ which implies that there exists $f\in E$ such that $coz(g)\nsubseteq Z_\mathcal{P}(f)\implies fg\neq \boldsymbol{0}$. Thus $\boldsymbol{0}\neq fg\in I\cap E$.

	\end{proof}
	
	It is obvious from the above theorem that free ideals of $C(X)_\mathcal{P}$ are essential ideals. Further it can be seen that for $f\in C(X)_\mathcal{P}$, $<f>$ is essential if and only if $f$ is a non-zero, non-unit, regular element. Thus, if $\mathcal{P}$ contains all singleton subsets of $X$, $<f>$ is non-essential for any $f\in C(X)_\mathcal{P}\setminus \{\boldsymbol{0} \}$. In particular, this is true for the rings $C(X)_F$, $T'(X)$ and $C(X)_K$. Further, it follows from the fact that $C(X)_\mathcal{P}$ is a $pm$-ring \cite{DABM} that if $P$ is a prime ideal of $C(X)_\mathcal{P}$, $\bigcap Z_\mathcal{P}[P]$ is either a singleton set or an empty set. (A commutative ring, $R$ with unity is a $pm$-ring if every prime ideal of $R$ is contained in a unique maximal ideal of $R$.) If $P$ is non-essential, then it follows from the above theorem that $\bigcap Z_\mathcal{P}[P]=\{x_0\}$ for some $x_0\in X$ which is an isolated point in $X_\mathcal{P}$. So there exists $g\in C(X)_\mathcal{P}$ such that $\emptyset\neq coz(g)\subseteq \{x_0\}$ which implies that $g=g(x_0)\chi_{\{{x_0}\}}$ and $\chi_{\{{x_0}\}}\in C(X)_\mathcal{P}$. Thus $P\subseteq M_{x_0}=<e>$ where $e=\boldsymbol{1}-\chi_{\{{x_0}\}}$. Since $P$ is a prime ideal, $e(\boldsymbol{1}-e)=\boldsymbol{0}$ and $\boldsymbol{1}-e\notin M_{x_0}$, $e\in P$. Thus $P=M_{x_0}=<e>$, which is also a minimal prime ideal. We summarize these discussions in the following corollary.
	
	\begin{corollary} \label{c3}
		A prime ideal of $C(X)_\mathcal{P}$ is either an essential ideal or a maximal ideal which is also a minimal prime ideal.
	\end{corollary}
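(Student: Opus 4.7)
The plan is to assume a prime ideal $P$ of $C(X)_\mathcal{P}$ is not essential and then show that it is necessarily a maximal ideal that is simultaneously a minimal prime. The essential input is the $pm$-property of $C(X)_\mathcal{P}$ (cited from \cite{DABM}), Theorem \ref{t3}, and the observation that a non-essential prime has a very rigid zero-set intersection.

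First I would argue that $\bigcap Z_\mathcal{P}[P]$ contains at most one point. Since $C(X)_\mathcal{P}$ is a $pm$-ring, $P$ lies in a unique maximal ideal; using the identification of maximal ideals with points via $M_x=\{f\in C(X)_\mathcal{P}\colon f(x)=0\}$, the set $\bigcap Z_\mathcal{P}[P]$ is either a single point $\{x_0\}$ or empty. If it is empty, $P$ is free, hence essential by Theorem \ref{t3}, and we are done. So assume $\bigcap Z_\mathcal{P}[P]=\{x_0\}$.

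Next, apply Theorem \ref{t3}: since $P$ is non-essential, $int_{X_\mathcal{P}}\bigcap Z_\mathcal{P}[P]\neq \emptyset$, which forces $\{x_0\}$ to be open in $X_\mathcal{P}$, i.e.\ an isolated point of $X_\mathcal{P}$. Then there is $g\in C(X)_\mathcal{P}$ with $\emptyset\neq coz(g)\subseteq\{x_0\}$, so $g=g(x_0)\chi_{\{x_0\}}$, and hence $\chi_{\{x_0\}}\in C(X)_\mathcal{P}$. Setting $e=\mathbf{1}-\chi_{\{x_0\}}$, the element $e$ is idempotent, so $\langle e\rangle$ is generated by an idempotent and coincides with $M_{x_0}$. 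The relation $e(\mathbf{1}-e)=\mathbf{0}\in P$ together with $\mathbf{1}-e=\chi_{\{x_0\}}\notin M_{x_0}\supseteq P$ (equivalently, $\chi_{\{x_0\}}(x_0)=1\neq 0$) forces $e\in P$ by primality, so $P\supseteq \langle e\rangle=M_{x_0}$. Combined with $P\subseteq M_{x_0}$, we conclude $P=M_{x_0}$, which is maximal.

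Finally, I would note that $M_{x_0}=\langle e\rangle$ with $e$ idempotent is automatically a minimal prime ideal: any prime ideal $Q\subseteq \langle e\rangle$ must contain one of the two factors of $e(\mathbf{1}-e)=\mathbf{0}$, and since $\mathbf{1}-e\notin \langle e\rangle$, it must contain $e$, giving $Q=\langle e\rangle$. The main obstacle, if any, is purely bookkeeping: making sure that the argument for $\bigcap Z_\mathcal{P}[P]$ being at most a singleton is properly anchored in the $pm$-property and the earlier description of maximal ideals, since those facts are cited rather than re-proved here.
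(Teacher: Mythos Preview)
Your proposal is correct and follows essentially the same route as the paper: the paper's argument (given in the paragraph preceding the corollary) also uses the $pm$-property to force $\bigcap Z_\mathcal{P}[P]$ to be at most a singleton, invokes Theorem \ref{t3} to make $\{x_0\}$ isolated in $X_\mathcal{P}$, produces the idempotent $e=\boldsymbol{1}-\chi_{\{x_0\}}$, and concludes $P=M_{x_0}=\langle e\rangle$ via $e(\boldsymbol{1}-e)=\boldsymbol{0}$. Your write-up is in fact slightly more explicit than the paper's, since you spell out why $\langle e\rangle$ is a minimal prime, which the paper simply asserts.
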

	
	Furthermore, if $X$ is a finite set, then $C(X)_\mathcal{P}$ has no free (proper) ideals and for each $x\in X$, $\{x\}$ is open in $X$, and hence open in $X_\mathcal{P}$. So, for any ideal $I$ of $C(X)_\mathcal{P}$, $int_{X_\mathcal{P}}\bigcap Z_\mathcal{P}[I]\neq \emptyset\implies I$ is a non-essential ideal of $C(X)_\mathcal{P}$.
	
	Again, if each ideal of $C(X)_\mathcal{P}$ is non-essential, then $M_x$ (See \cite{DABM}) is non-essential, for each $x\in X$ and so $\{x\}=\bigcap Z_\mathcal{P}[M_x]$ is open in $X_\mathcal{P}$. If $X$ is an infinite set, then $I=\{f\in C(X)_\mathcal{P}\colon coz_\mathcal{P}(f) \text{ is finite.} \}$ is a free (proper) ideal of $C(X)_\mathcal{P}$. Indeed, if not, then there exists $x_0\in \bigcap Z_\mathcal{P}[I]$ and since $\{x_0\}$ is open in $X_\mathcal{P}$, there exists $h\in C(X)_\mathcal{P}$ such that $x_0\in coz_\mathcal{P}(h)\subseteq \{x_0\}$ which ensures that $h\in I$ but $x_0\notin Z_\mathcal{P}(h)$, which is a contradiction. Hence $I$ is an essential ideal of $C(X)_\mathcal{P}$, which contradicts our assumption. This establishes the following corollary.
	
	\begin{corollary} \label{c3.1}
		Each ideal of $C(X)_\mathcal{P}$ is non-essential if and only if $X$ is a finite set.
	\end{corollary}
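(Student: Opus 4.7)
The plan is to apply Theorem \ref{t3}, which identifies essential non-zero ideals $E$ with those satisfying $int_{X_\mathcal{P}}\bigcap Z_\mathcal{P}[E]=\emptyset$. Both implications will hinge on whether singletons are open in $X_\mathcal{P}$, with the preliminary observation that $\bigcap Z_\mathcal{P}[M_x]=\{x\}$ for each $x\in X$: the containment $\supseteq$ is immediate from the definition of $M_x$, while for $y\neq x$ the $T_1$-property of $X_\mathcal{P}$ (inherited from $\tau$ since $\tau_\mathcal{P}\supseteq\tau$) combined with the fact that $coz_\mathcal{P}[X]$ is a base for $\tau_\mathcal{P}$ produces $f\in C(X)_\mathcal{P}$ with $y\in coz(f)\subseteq X\setminus\{x\}$, i.e., $f\in M_x$ and $y\notin Z_\mathcal{P}(f)$.

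For the ``only if'' direction, I would assume every proper ideal is non-essential. Non-essentiality of $M_x$ then forces $\{x\}=\bigcap Z_\mathcal{P}[M_x]$ to have non-empty $\tau_\mathcal{P}$-interior, so $\{x\}$ is open in $X_\mathcal{P}$, i.e., $X_\mathcal{P}$ is discrete. If $X$ were infinite, I would consider the ideal $I=\{f\in C(X)_\mathcal{P}:coz_\mathcal{P}(f)\text{ is finite}\}$, which is proper since $\boldsymbol{1}\notin I$, and which is free because the openness of each $\{x_0\}$ in $X_\mathcal{P}$ yields (via the cozero base) some $h\in C(X)_\mathcal{P}$ with $\emptyset\neq coz_\mathcal{P}(h)\subseteq\{x_0\}$, giving $h\in I$ with $x_0\notin Z_\mathcal{P}(h)$. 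Theorem \ref{t3} would then make $I$ essential, contradicting the hypothesis; thus $X$ is finite.

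For the converse, a finite $T_1$ space carries the discrete topology, so $\tau_\mathcal{P}$ is also discrete and $C(X)_\mathcal{P}=\mathbb{R}^X$. For any proper ideal $I$, $\bigcap Z_\mathcal{P}[I]\neq\emptyset$: otherwise, for each $a\in X$ one could pick $f_a\in I$ with $f_a(a)\neq 0$, and the finite sum $\sum_{a\in X}f_a^2\in I$ would be nowhere zero, hence a unit. Choosing $x_0\in\bigcap Z_\mathcal{P}[I]$ and using that $\{x_0\}$ is $\tau_\mathcal{P}$-open, Theorem \ref{t3} yields that $I$ is non-essential. The main point requiring care is the preliminary identification $\bigcap Z_\mathcal{P}[M_x]=\{x\}$, which rests on extracting a cozero neighborhood of $y$ that avoids $x$; once this is in place, everything reduces to bookkeeping with Theorem \ref{t3}.
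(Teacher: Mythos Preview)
Your proof is correct and follows essentially the same approach as the paper's argument (which appears in the paragraph immediately preceding the corollary): both directions hinge on Theorem~\ref{t3}, the identification $\bigcap Z_\mathcal{P}[M_x]=\{x\}$, and the free ideal $I=\{f\in C(X)_\mathcal{P}:coz_\mathcal{P}(f)\text{ is finite}\}$ to derive a contradiction when $X$ is infinite. The only difference is that you supply more detail---explicitly verifying $\bigcap Z_\mathcal{P}[M_x]=\{x\}$ via the $T_1$ property and the cozero base, and proving fixedness in the finite case via the sum-of-squares trick---whereas the paper takes these as understood.
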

	
	 We provide a result stronger than Theorem \ref{l1.0} using essential ideals of $C(X)_\mathcal{P}$ which can be proved by following closely the proof of Proposition 3.7 of \cite{A1995}.
	
	\begin{corollary} \label{c3.2}
		A $\tau \mathcal{P}$-space $(X,\tau,\mathcal{P})$ is a $\mathcal{P}P$-space if and only if every essential ideal of $C(X)_\mathcal{P}$ is a $z$-ideal.
	\end{corollary}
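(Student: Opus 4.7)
The forward direction is immediate: if $(X,\tau,\mathcal{P})$ is a $\mathcal{P}P$-space, Theorem \ref{l1.0} says that \emph{every} ideal of $C(X)_\mathcal{P}$ is a $z$-ideal, and in particular every essential ideal is. So the real content lies in the converse, where the hypothesis has been weakened from ``all ideals'' to ``essential ideals only'', and this is what the plan needs to address.

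For the converse I would use Theorem \ref{l1.0} in reverse: it suffices to show that every non-unit $f\in C(X)_\mathcal{P}$ is Von-Neumann regular, i.e.\ $f \in \langle f^2\rangle$. Fix such an $f$ and consider the ideal
\[
J \;=\; \langle f^2 \rangle + Ann(f^2).
\]
The crucial step is to verify that $J$ is an essential ideal of $C(X)_\mathcal{P}$, so that the hypothesis applies to it. Since $C(X)_\mathcal{P}$ is reduced, a quick check gives $Ann(f^2)=Ann(f)$ (from $(hf)^2 = h\cdot hf^2$). Then by Lemma \ref{l3.0} it is enough to show $Ann(J)=\{0\}$: any $r\in Ann(J)$ must annihilate $f^2$, hence lies in $Ann(f^2)\subseteq J$, which forces $r^2=0$ and therefore $r=0$.

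Granted that $J$ is essential, the standing hypothesis makes $J$ a $z$-ideal. Because $Z_\mathcal{P}(f)=Z_\mathcal{P}(f^2)$ and $f^2\in J$, the $z$-ideal property delivers $f\in J$; write $f = f^2 g + h$ with $g\in C(X)_\mathcal{P}$ and $h\in Ann(f)$. Multiplying by $f$ yields $f^2 = f^3 g$, and a short computation (which I would not grind through in the final text) gives $(f-f^2g)^2 = f^2 - 2f^3g + f^4g^2 = 0$. Reducedness then forces $f=f^2g$, as required.

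The only step that is not purely formal is finding the \emph{right} essential ideal to test the hypothesis against; once $J=\langle f^2\rangle + Ann(f^2)$ is chosen, the essentiality argument and the extraction of $f=f^2g$ are automatic. The observation that this $J$ simultaneously (i) is forced to be essential by reducedness and (ii) contains $f^2$ while sharing its $z$-class with $f$ is therefore the load-bearing idea of the proof.
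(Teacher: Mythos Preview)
Your argument is correct and is precisely the approach indicated by the paper, which defers to Proposition~3.7 of \cite{A1995}: form the essential ideal $\langle f^{2}\rangle + Ann(f)$, use the $z$-ideal hypothesis to get $f$ inside it, and then exploit reducedness to extract $f=f^{2}g$. A minor streamlining for your final computation: once you write $h=f-f^{2}g\in Ann(f)$ you get $h^{2}=h(f-f^{2}g)=0$ directly, without expanding the square.
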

	
	The following result, established by Azarpanah in \cite{A1995}, shows how rarely the set of all essential ideals and the set of free ideals of $C(X)$ coincide.
	\begin{theorem} \label{t2} 
		The set of all essential ideals of $C(X)$ and the set of free ideals of $C(X)$ coincide if and only if $X$ is a discrete space.
	\end{theorem}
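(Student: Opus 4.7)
The plan is to treat the equivalence as two separate directions, both leveraging the characterization of essential ideals for $C(X)$ analogous to Theorem \ref{t3} (with $X_{\mathcal{P}}$ replaced by $X$): an ideal $E$ is essential if and only if $\operatorname{int}\bigcap Z[E]=\emptyset$.

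First, I would observe the easy containment: every free ideal is automatically essential in $C(X)$. Indeed, if $\bigcap Z[I]=\emptyset$, then a fortiori $\operatorname{int}\bigcap Z[I]=\emptyset$, so $I$ is essential. This reduces the theorem to showing that the reverse inclusion (essential $\Rightarrow$ free) is equivalent to $X$ being discrete.

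For the ``$\Leftarrow$'' direction, assume $X$ is discrete. Then every subset of $X$ is open, so for any ideal $I$ we have $\operatorname{int}\bigcap Z[I]=\bigcap Z[I]$. Hence $I$ essential forces $\bigcap Z[I]=\emptyset$, meaning $I$ is free. For the ``$\Rightarrow$'' direction I would argue by contrapositive: suppose $X$ is not discrete, so there exists a non-isolated point $x\in X$. Consider the fixed maximal ideal $M_x=\{f\in C(X)\colon f(x)=0\}$. Since $X$ is Tychonoff, for every $y\neq x$ there is some $f\in C(X)$ with $f(x)=0$ and $f(y)\neq 0$, so $\bigcap Z[M_x]=\{x\}$. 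Because $x$ is not isolated, $\operatorname{int}\{x\}=\emptyset$, and the characterization of essential ideals gives that $M_x$ is essential. On the other hand, $x\in\bigcap Z[M_x]$, so $M_x$ is not free. This exhibits an essential ideal that fails to be free, completing the contrapositive.

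The only substantive step is verifying the characterization of essential ideals of $C(X)$ in the form $\operatorname{int}\bigcap Z[E]=\emptyset$; since $C(X)$ is a reduced ring, this follows by the same argument as in Lemma \ref{l3.0} and Theorem \ref{t3}, so I would simply cite those. I do not anticipate any genuine obstacle: the main subtlety is just the use of Tychonoff separation to ensure $\bigcap Z[M_x]$ is exactly $\{x\}$, which is standard.
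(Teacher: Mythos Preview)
Your argument is correct, but note that the paper does not supply its own proof of Theorem~\ref{t2}: it is quoted as a result of Azarpanah \cite{A1995}. Your proof is precisely the natural one, relying on the characterization of essential ideals (Theorem~3.1 of \cite{A1995}, which is the case $\mathcal{P}=\{\emptyset\}$ of Theorem~\ref{t3} here) together with the observation that $\bigcap Z[M_x]=\{x\}$ for a Tychonoff space; this is essentially how the result is obtained in \cite{A1995} as well.
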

	\begin{remark} \label{rem}
		 Note that $X_\mathcal{P}$ is the discrete space whenever $C(X)_\mathcal{P}$ contains all characteristic functions of the form $\chi_{\{{x}\}}$ for all $x\in X$. It follows from Theorem \ref{t3} that in this case, the set of all essential ideals always coincides with the set of all free ideals. In particular this property can be seen in rings $C(X)_F$ (\cite{GGT2018}) and $T'(X)$ (\cite{A2010}). 
	\end{remark}

	\section{When is $C(X)_\mathcal{P}$ clean?}
	
	W. K. Nicholson defined a commutative ring $R$ with unity to be clean if every element in $R$ can be expressed as the sum of a unit and an idempotent in $R$ in \cite{N1977} and an element $a\in R$ to be a clean element if it can be expressed as the sum of a unit and an idempotent in $R$ in \cite{N2001}.
	
	It is noted by Anderson and Camillo that for a commutative ring $R$ with finitely many minimal prime ideals, $R$ is a clean ring if and only if it is a $pm$-ring \cite[Theorem 5]{AC2002}. In \cite{DABM} we have shown that $C(X)_\mathcal{P}$ is a $pm$-ring (Corollary 2.16). Further by Theorem 5.8 of \cite{DABM}, if $\mathcal{P}$ contains all singleton subsets of $X$, then the only minimal ideals of $C(X)_\mathcal{P}$ are of the form $<\chi_{\{{a}\}}>$ for $a\in X$, where $\chi_{\{{a}\}}(x)=\begin{cases}
		0 \text{ when }x\neq a \\ 1\text{ when }x=a
	\end{cases}$ for all $x\in X$. But if $|X|\geq 3$ and $\mathcal{P}$ contains all singleton subsets of $X$, then $<\chi_{\{{a}\}}>$ is not a prime ideal for any $a\in X$. Thus, if $|X|\geq 3$ and $\mathcal{P}$ contains all singleton subsets of $X$, then $C(X)_\mathcal{P}$ has no minimal prime ideals. Thus, under these conditions $C(X)_\mathcal{P}$ is a clean ring. This is summarised in the following theorem.

	\begin{theorem} \label{t2.1}
		Let $|X|\geq 3$ and $\mathcal{P}$ contains all singleton subsets of $X$, then $C(X)_\mathcal{P}$ is a clean ring.
	\end{theorem}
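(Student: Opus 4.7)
The plan is to combine two ingredients already in hand in the excerpt: the fact that $C(X)_\mathcal{P}$ is always a $pm$-ring (Corollary 2.16 of \cite{DABM}), and the Anderson--Camillo criterion cited just above: a commutative ring with only finitely many minimal prime ideals is clean if and only if it is a $pm$-ring \cite{AC2002}. The only remaining work is to pin down the minimal ideal structure of $C(X)_\mathcal{P}$ under the hypotheses $|X|\geq 3$ and $\mathcal{P}\supseteq\{\{x\}\colon x\in X\}$.

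First I would invoke Theorem 5.8 of \cite{DABM} to record that, under the singleton hypothesis on $\mathcal{P}$, the only minimal (nonzero) ideals of $C(X)_\mathcal{P}$ are the principal ideals $<\chi_{\{{a}\}}>$ for $a\in X$. I would then rule out any of these being prime: given $a\in X$, since $|X|\geq 3$ I can pick distinct $b,c\in X\setminus\{a\}$; by the hypothesis on $\mathcal{P}$ both $\chi_{\{{b}\}}$ and $\chi_{\{{c}\}}$ belong to $C(X)_\mathcal{P}$, their product is the zero function and hence trivially in $<\chi_{\{{a}\}}>$, yet neither factor lies in $<\chi_{\{{a}\}}>$ because every element of that ideal vanishes off $\{a\}$. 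Hence $<\chi_{\{{a}\}}>$ is not prime for any $a\in X$.

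Combining the two steps: no minimal ideal of $C(X)_\mathcal{P}$ is prime. Interpreted as saying $C(X)_\mathcal{P}$ has no minimal ideals among its primes, in particular only finitely many (vacuously), the Anderson--Camillo criterion applies; the $pm$-ring property then upgrades the conclusion to cleanness of $C(X)_\mathcal{P}$.

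The main obstacle I foresee is the careful bookkeeping around the Anderson--Camillo step. Taken literally, ``no minimal prime ideals'' is impossible for a commutative ring with unity, so the statement has to be parsed as ``no minimal ideals are prime,'' and the $pm$-property used to force cleanness. A fully rigorous writeup should either appeal to a sharper structural description of the minimal primes of $C(X)_\mathcal{P}$ from \cite{DABM}, or bypass the criterion altogether by constructing an explicit decomposition $f=u+e$ for each $f\in C(X)_\mathcal{P}$, exploiting the abundant supply of orthogonal idempotents $\chi_{\{{x}\}}$ handed to us by the hypothesis on $\mathcal{P}$.
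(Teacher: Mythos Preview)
Your proposal follows exactly the argument the paper gives in the paragraph preceding the theorem: invoke the $pm$-ring property from \cite{DABM}, identify the minimal ideals as $\langle\chi_{\{a\}}\rangle$ via Theorem~5.8 of \cite{DABM}, rule these out as primes when $|X|\geq 3$, and then appeal to the Anderson--Camillo criterion. Your explicit verification that $\langle\chi_{\{a\}}\rangle$ is not prime (via $\chi_{\{b\}}\chi_{\{c\}}=0$) is more detailed than the paper's bare assertion, and the concern you raise in your final paragraph---that ``no minimal prime ideals'' is literally impossible in a commutative ring with unity, so the step through \cite[Theorem~5]{AC2002} needs care---is a genuine wrinkle that the paper's own argument shares and does not address.
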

	It follows from Theorem \ref{t2.1} that if $|X|\geq 3$, $C(X)_F$ is a clean ring. Also, $T'(X)=C(X)_{\mathcal{P}_{nd}}$ (See \cite{DABM}) contains all characteristic functions of the form $\chi_{\{{a}\}}$ for all $a\in X$ and thus it is a clean ring, even though $\mathcal{P}_{nd}(=$the set of all closed nowhere dense subsets of $X$) may not contain all singleton subsets of $X$.
	
	
	\begin{definitions} \label{d2.2}
		A subset $U\subseteq X$ is said to be $\tau \mathcal{P}$-clopen if $U=Z_\mathcal{P}(f)=X\setminus Z_\mathcal{P}(g)$ for some $f,g\in C(X)_\mathcal{P}$. In other words, by a $\tau \mathcal{P}$-clopen set in $X$ we mean a subset of $X$ which is clopen in $X_\mathcal{P}$.
		
		A $\tau\mathcal{P}$-space $X$ is said to be $\tau\mathcal{P}$-zero-dimensional if for every pair of $\mathcal{P}$-completely separated sets $A,B\subseteq X$, there exists a $\tau\mathcal{P}$-clopen set $U\subseteq X$ such that $A\subseteq U\subseteq X\setminus B$.
	\end{definitions}
	If a subset $U$ of $X$ is such that $U=Z_\mathcal{P}(f)=X\setminus Z_\mathcal{P}(g)$ for some $f,g\in C(X)_\mathcal{P}$, then $U=Z_\mathcal{P}(e)=X\setminus Z_\mathcal{P}(\boldsymbol{1}-e)$, where $e=\frac{|f|}{|f|+|g|}$ and $\boldsymbol{1}-e$ are idempotents in $C(X)_\mathcal{P}$. This gives us the following observation.
	\begin{observation} \label{o2.3}
		A subset $U$ of $X$ is $\tau\mathcal{P}$-clopen if and only if there exists an idempotent $e\in C(X)_\mathcal{P}$ such that $U=Z_\mathcal{P}(e)=X\setminus Z_\mathcal{P}(\boldsymbol{1}-e)$.
	\end{observation}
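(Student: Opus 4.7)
The plan is to treat the observation as a direct packaging of the computation already sketched in the paragraph immediately preceding it. The forward implication amounts to verifying the explicit construction given there, while the converse is essentially tautological; so the proof will be short and the only step needing mild care is the membership of the candidate idempotent in $C(X)_\mathcal{P}$.

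For the forward direction, I would assume $U$ is $\tau\mathcal{P}$-clopen, so that $U = Z_\mathcal{P}(f) = X\setminus Z_\mathcal{P}(g)$ for some $f,g\in C(X)_\mathcal{P}$, and set $e = |f|/(|f|+|g|)$. Since $Z_\mathcal{P}(f)\cap Z_\mathcal{P}(g) = \emptyset$, the denominator is strictly positive everywhere, so $e$ is a well-defined real-valued function on $X$. To check $e\in C(X)_\mathcal{P}$, I would observe that $(s,t)\mapsto |s|/(|s|+|t|)$ is continuous on the open set where $|s|+|t|>0$, hence $D_e\subseteq D_f\cup D_g$ and therefore $\overline{D_e}\subseteq \overline{D_f}\cup \overline{D_g}\in \mathcal{P}$. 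Pointwise, $e(x)=0$ on $Z_\mathcal{P}(f)=U$ and $e(x)=1$ on $Z_\mathcal{P}(g)=X\setminus U$, so $e$ takes only the values $0$ and $1$; this gives $e^2=e$ and $Z_\mathcal{P}(e)=U = X\setminus Z_\mathcal{P}(\boldsymbol{1}-e)$, as required.

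For the converse, suppose there is an idempotent $e\in C(X)_\mathcal{P}$ with $U = Z_\mathcal{P}(e) = X\setminus Z_\mathcal{P}(\boldsymbol{1}-e)$. Because $e$ and $\boldsymbol{1}-e$ both lie in $C(X)_\mathcal{P}$, taking $f=e$ and $g=\boldsymbol{1}-e$ in Definition \ref{d2.2} exhibits $U$ as $\tau\mathcal{P}$-clopen directly from the definition.

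There is no real obstacle here; the only sub-step that requires a brief remark is the inclusion $\overline{D_e}\subseteq \overline{D_f}\cup \overline{D_g}$, which ensures the ideal condition is preserved when forming $e$. Everything else follows from elementary pointwise arithmetic with idempotents.
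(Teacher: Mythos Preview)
Your proof is correct and follows exactly the approach the paper uses: the paragraph preceding the observation already records the construction $e=\dfrac{|f|}{|f|+|g|}$, and you have simply supplied the routine verifications (well-definedness, $\overline{D_e}\subseteq \overline{D_f}\cup\overline{D_g}\in\mathcal{P}$, and that $e$ is $\{0,1\}$-valued) that the paper leaves implicit. The converse is indeed immediate from Definition~\ref{d2.2}.
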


	We note that $\mathbb{R}$ with the usual topology is not $\tau \mathcal{P}$-zero-dimensional if we take  $\mathcal{P}=\{\emptyset, \{0\} \}$.
	\begin{example} \label{eg1}
		Let us consider $X=\mathbb{R}$ and $\mathcal{P}=\{\emptyset, \{0\} \}$. Then the function $\displaystyle{f(x)=\begin{cases}
				\sin \frac{\pi}{x} &\text{ if }x\neq 0\\
				0 &\text{ otherwise}
			\end{cases}}$ is a member of $C(X)_\mathcal{P}$. However since the only clopen sets in $\mathbb{R}\setminus \{0\}$ are $\mathbb{R}\setminus \{0\}$, $\emptyset$, $(-\infty,0)$ and $(0,\infty)$, there does not exist any idempotent $e\in C(X\setminus \{0\})$ such that $A\setminus \{0\}\subseteq Z(e)\subseteq (X\setminus B)\setminus \{0\}$. Thus there does not exist any idempotent $e'\in C(X)_\mathcal{P}$ such that $A\subseteq Z(e)\subseteq (X\setminus B) $ which ensures that $X$ is not $\tau\mathcal{P}$-zero-dimensional.
		\\ More over, one can also easily see that $\mathbb{R}$ is not $\tau\mathcal{P}$-zero-dimensional if we take  $\mathcal{P}=$the set of all finite subsets of $\mathbb{R}$.

	\end{example}
	
	We look for some examples of $\tau\mathcal{P}$-zero dimensional spaces.
	We know that a Tychonoff space is said to be strongly zero-dimensional \cite{E1977} if two disjoint zero sets can be separated by a clopen set. Clearly, strongly zero-dimensional spaces are $\tau\mathcal{P}$-zero dimensional with $\mathcal{P}=\{\emptyset \}$.
	We use the next observation to achieve a few non-trivial examples (with various choices of $\mathcal{P}$).
	\begin{observation} \label{o2}
		A $C^*$-embedded subspace of a strongly zero-dimensional Tychonoff space is also strongly zero-dimensional.
	\end{observation}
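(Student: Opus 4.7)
The plan is to exploit the characterisation of $C^*$-embedding via Urysohn's Extension Theorem: a subspace $Y$ of a Tychonoff space $X$ is $C^*$-embedded in $X$ if and only if every pair of completely separated subsets of $Y$ is also completely separated in $X$. Using this, I would transfer the separation of disjoint zero sets from $Y$ up to $X$, invoke strong zero-dimensionality of $X$ to get a clopen separator, and pull it back by intersecting with $Y$.

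In detail, let $Y$ be a $C^*$-embedded subspace of a strongly zero-dimensional Tychonoff space $X$. Since subspaces of Tychonoff spaces are Tychonoff, $Y$ is Tychonoff. Let $Z_1, Z_2$ be two disjoint zero sets in $Y$. Disjoint zero sets are always completely separated, so by the Urysohn extension characterisation of $C^*$-embedding, $Z_1$ and $Z_2$ are completely separated in $X$ as well. Hence there exist disjoint zero sets $Z_1', Z_2'$ of $X$ with $Z_1 \subseteq Z_1'$ and $Z_2 \subseteq Z_2'$. Applying the strong zero-dimensionality of $X$ to $Z_1', Z_2'$, we obtain a clopen set $U$ of $X$ such that $Z_1' \subseteq U \subseteq X \setminus Z_2'$.

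Finally, set $V = U \cap Y$. Then $V$ is clopen in $Y$ and satisfies $Z_1 \subseteq V \subseteq Y \setminus Z_2$, proving that $Y$ is strongly zero-dimensional.

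The only non-trivial step is the appeal to Urysohn's Extension Theorem, which is the standard characterisation of $C^*$-embedding; otherwise the argument is a direct transfer-and-restrict construction. No $\mathcal{P}$-machinery is needed here, since the observation concerns classical strong zero-dimensionality.
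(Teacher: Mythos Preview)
Your proof is correct. The appeal to Urysohn's Extension Theorem (a subspace $Y$ is $C^*$-embedded in $X$ iff any two sets completely separated in $Y$ are completely separated in $X$) is exactly the right tool, and the transfer-and-restrict argument is clean and complete.

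Note that the paper itself does not supply a proof of this observation; it is simply stated as a known fact and then used in the proof of Theorem~\ref{t2.3}. So there is no ``paper's own proof'' to compare against---your argument is a standard and fully adequate justification of what the authors take for granted.
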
 

	It is known that a Tychonoff, extremally disconnected space is strongly zero-dimensional (Theorem 6.2.25 \cite{E1977}). Not only this, but a Tychonoff, extremally disconnected space is $\tau \mathcal{P}$-zero-dimensional for any choice of $\mathcal{P}$ as is seen in the next theorem.
	\begin{theorem} \label{t2.3}
		A Tychonoff, extremally disconnected space  is $\tau \mathcal{P}$-zero-dimensional.
	\end{theorem}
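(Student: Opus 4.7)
The plan is to reduce the problem to the classical strong zero-dimensionality of Tychonoff extremally disconnected spaces (Theorem 6.2.25 of \cite{E1977}) by restricting to the open subset on which the separating function is actually continuous, and then patching the resulting clopen separator over the exceptional set (which lies in $\mathcal{P}$) without disturbing membership in $C(X)_\mathcal{P}$.

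Concretely, given $\mathcal{P}$-completely separated sets $A,B\subseteq X$, I would fix an $f\in C(X)_\mathcal{P}$ with $f|_A\equiv 0$ and $f|_B\equiv 1$, and put $D:=\overline{D_f}\in \mathcal{P}$ and $Y:=X\setminus D$. Then $Y$ is open in $X$, $f|_Y$ is continuous, $Y$ is Tychonoff (as a subspace of a Tychonoff space) and extremally disconnected (as an open subspace of an extremally disconnected space), hence strongly zero-dimensional. The ordinary zero sets $Z(f|_Y)$ and $Z(f|_Y-1)$ are disjoint in $Y$ and contain $A\cap Y$ and $B\cap Y$ respectively, so strong zero-dimensionality of $Y$ produces a clopen subset $V\subseteq Y$ with $A\cap Y\subseteq V\subseteq Y\setminus (B\cap Y)$.

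My candidate separator in $X$ is $U:=V\cup (A\cap D)$. The containments $A\subseteq U$ and $U\cap B=\emptyset$ are immediate (the latter because $V\cap B\subseteq V\cap B\cap Y=\emptyset$ and $A\cap B=\emptyset$). The real work is to show $\partial_X U\in \mathcal{P}$. Since $V$ is closed in $Y$, the identity $\overline{V}^Y=\overline{V}^X\cap Y$ forces $\overline{V}^X\subseteq V\cup D$; together with $\overline{A\cap D}^X\subseteq D$ this gives $\overline{U}^X\subseteq V\cup D$. On the other hand $V$ is open in $Y$ (hence in $X$) with $V\subseteq U$, so every point of $V$ has the open neighbourhood $V$ disjoint from $X\setminus U$, whence $V\cap \overline{X\setminus U}=\emptyset$. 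Combining,
\[
\partial_X U \;=\; \overline{U}\cap \overline{X\setminus U}\;\subseteq\; D\in \mathcal{P}.
\]
Since $\partial_X U$ is closed in $X$ and $\mathcal{P}$ is an ideal of closed sets, $\partial_X U\in \mathcal{P}$. Therefore $\chi_{X\setminus U}\in C(X)_\mathcal{P}$ is an idempotent with $U=Z_\mathcal{P}(\chi_{X\setminus U})=X\setminus Z_\mathcal{P}(\boldsymbol{1}-\chi_{X\setminus U})$, and Observation \ref{o2.3} identifies $U$ as a $\tau\mathcal{P}$-clopen set with $A\subseteq U\subseteq X\setminus B$. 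Hence $X$ is $\tau\mathcal{P}$-zero-dimensional.

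The step I expect to be most delicate is the boundary computation, specifically the containment $\overline{V}^X\subseteq V\cup D$ that comes from $V$ being clopen in $Y$; everything else is routine closure arithmetic together with the descent of extremal disconnectedness and the Tychonoff property to the open subspace $Y$ where $f$ is genuinely continuous.
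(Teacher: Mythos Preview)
Your proof is correct and follows essentially the same strategy as the paper: restrict the separating function $f$ to the open subspace $Y=X\setminus\overline{D_f}$, use strong zero-dimensionality there to obtain a clopen separator, and patch over the exceptional set $\overline{D_f}\in\mathcal{P}$. Your set $U=V\cup(A\cap D)$ is exactly the zero set of the idempotent $e_0$ the paper constructs, and your boundary computation $\partial_X U\subseteq D$ is the set-theoretic rephrasing of the paper's observation that $D_{e_0}\subseteq\overline{D_f}$. The only cosmetic difference is that you deduce strong zero-dimensionality of $Y$ from ``open subspace of extremally disconnected is extremally disconnected, hence strongly zero-dimensional,'' whereas the paper routes through Observation~\ref{o2} via $C^*$-embeddedness of open sets in extremally disconnected spaces; both are standard and equivalent here.
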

	
	\begin{proof} 
		Let $X$ be a Tychonoff, extremally disconnected space and let $A,B\subseteq X$ be $\mathcal{P}$-completely separated. Then there exists $f\in C^*(X)_\mathcal{P}$ such that $f(A)=\{0\}$ and $f(B)=\{1\}$. It follows that $A\setminus \overline{D_f}$ and $B\setminus \overline{D_f}$ are completely separated in $X\setminus \overline{D_f}$. Since $X\setminus \overline{D_f}$ is an open subspace of $X$ and $X$ is extremally disconnected, by Observation \ref{o2}, there exists an idempotent $e\in C(X\setminus \overline{D_f})$ such that $A\setminus \overline{D_f}\subseteq Z(e)\subseteq (X\setminus \overline{D_f})\setminus (B\setminus \overline{D_f})$. Define $e_0(x)=\begin{cases}
			e(x) \text{ when }x\in X\setminus \overline{D_f}, \\ 0 \text{ when } x\in A\cap \overline{D_f}, \\ 1 \text{ otherwise}
		\end{cases}.$ Since $X\setminus \overline{D_f}$ is open, $D_{e_0}\subseteq \overline{D_f}$. Therefore $e_0\in C^*(X)_\mathcal{P}$ is an idempotent and $A\subseteq Z_\mathcal{P}(e_0)\subseteq X\setminus B$.
	\end{proof}
	
	Another well-known result is that: a Tychonoff space $X$ is extremally disconnected space if and only if $\beta X$ is extremally disconnected. Thus we have the following corollary and example.
	
	\begin{corollary} \label{c2.1}
		If $X$ is a Tychonoff, extremally disconnected space, then both $X$ and $\beta X$ are $\tau \mathcal{P}$-zero-dimensional, for any choice of ideal, $\mathcal{P}$ of closed sets.
	\end{corollary}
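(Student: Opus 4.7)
The plan is to read this as an immediate consequence of Theorem \ref{t2.3} together with the well-known equivalence stated in the paragraph just before the corollary, namely that a Tychonoff space $X$ is extremally disconnected if and only if $\beta X$ is extremally disconnected. No new construction is needed; the work is just to check that both hypotheses of Theorem \ref{t2.3} are in place for each of the two spaces.

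First I would dispose of $X$ itself: since $X$ is by assumption a Tychonoff, extremally disconnected space, Theorem \ref{t2.3} applies verbatim and gives that $X$ is $\tau\mathcal{P}$-zero-dimensional, for every ideal $\mathcal{P}$ of closed subsets of $X$.

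Next I would turn to $\beta X$. Being a compact Hausdorff space, $\beta X$ is automatically Tychonoff, so the only thing that needs to be verified is that $\beta X$ is extremally disconnected. This is exactly the content of the well-known equivalence cited in the paragraph preceding the corollary. Once this is in hand, Theorem \ref{t2.3} is applied a second time, with $\beta X$ in place of $X$, and for any ideal of closed subsets of $\beta X$ chosen, to conclude that $\beta X$ is $\tau\mathcal{P}$-zero-dimensional.

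The argument has essentially no obstacle: the entire force of the corollary already resides in Theorem \ref{t2.3}, and the only substantive input beyond it is the transfer of extremal disconnectedness between $X$ and $\beta X$, which is quoted as known. The one small point I would flag for the reader is that the statement tacitly quantifies over ideals $\mathcal{P}$ on (potentially) two different underlying sets, since the meaningful ideals for $X$ and for $\beta X$ live in the respective power sets; Theorem \ref{t2.3} being available for any ambient Tychonoff, extremally disconnected space covers both cases uniformly.
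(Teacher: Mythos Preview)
Your proposal is correct and follows exactly the route the paper intends: the corollary is stated immediately after Theorem~\ref{t2.3} and the quoted fact that $X$ is extremally disconnected iff $\beta X$ is, and the paper offers no further argument beyond ``Thus we have the following corollary.'' Your added remark about $\mathcal{P}$ living on two different underlying sets is a helpful clarification the paper leaves implicit.
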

	\begin{example} \label{c2.2}
	 $\beta \mathbb{N}$ is $\tau \mathcal{P}$-zero-dimensional, for any choice of ideal, $\mathcal{P}$ of closed sets.
	\end{example}
	
	We now state a characterization of clean elements in $C(X)_\mathcal{P}$.
	\begin{theorem} \label{t2.4}
		An element $f\in C(X)_\mathcal{P}$ is clean if and only if there exists a $\tau\mathcal{P}$-clopen subset $U$ of $X$ such that $f^{-1}(\{1\})\subseteq U\subseteq X \setminus Z_\mathcal{P}(f)$.
	\end{theorem}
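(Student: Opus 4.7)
The plan is to exploit the bijective correspondence between idempotents of $C(X)_\mathcal{P}$ and $\tau\mathcal{P}$-clopen subsets of $X$ established in Observation \ref{o2.3}. Recall that every idempotent $e\in C(X)_\mathcal{P}$ satisfies $e(x)\in\{0,1\}$ for every $x\in X$, and conversely each $\tau\mathcal{P}$-clopen set $U$ gives rise to the idempotent $\chi_{X\setminus U}$ with zero-set exactly $U$. I would also use the easy fact that $u\in C(X)_\mathcal{P}$ is a unit if and only if $u(x)\neq 0$ for every $x\in X$ (the only point needing a line of verification is $1/u\in C(X)_\mathcal{P}$, which follows from $D_{1/u}\subseteq D_u$, hence $\overline{D_{1/u}}\in \mathcal{P}$).

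For the forward direction, assume $f=u+e$ with $u$ a unit and $e$ an idempotent in $C(X)_\mathcal{P}$. I would set $U=Z_\mathcal{P}(e)$, which is $\tau\mathcal{P}$-clopen by Observation \ref{o2.3}. If $f(x)=1$, then $u(x)+e(x)=1$; since $u(x)\neq 0$ and $e(x)\in\{0,1\}$, the only possibility is $e(x)=0$, so $x\in U$. This gives $f^{-1}(\{1\})\subseteq U$. On the other hand, if $x\in U$ then $e(x)=0$ and $f(x)=u(x)\neq 0$, so $x\in X\setminus Z_\mathcal{P}(f)$, giving $U\subseteq X\setminus Z_\mathcal{P}(f)$.

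For the converse, suppose $U$ is a $\tau\mathcal{P}$-clopen subset with $f^{-1}(\{1\})\subseteq U\subseteq X\setminus Z_\mathcal{P}(f)$. Let $e=\chi_{X\setminus U}\in C(X)_\mathcal{P}$ be the idempotent associated with $U$ (so $Z_\mathcal{P}(e)=U$), and put $u=f-e$. I would then check by cases that $u$ vanishes nowhere: on $U$, one has $u=f$, which is non-zero since $U\subseteq X\setminus Z_\mathcal{P}(f)$; on $X\setminus U$, one has $u=f-1$, which is non-zero because $f^{-1}(\{1\})\subseteq U$ prevents $f$ from taking the value $1$ off $U$. Membership $u\in C(X)_\mathcal{P}$ is automatic as $C(X)_\mathcal{P}$ is a ring, and $u$ being nowhere zero upgrades this to unit status by the preliminary remark. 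Hence $f=u+e$ is clean.

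No step presents a serious obstacle; the only subtlety worth flagging is the need to confirm that the inverse of a nowhere-vanishing element of $C(X)_\mathcal{P}$ again lies in $C(X)_\mathcal{P}$, which rests on $\overline{D_{1/u}}\subseteq \overline{D_u}\in\mathcal{P}$. Once this is noted, the proof reduces to the two straightforward case analyses described above.
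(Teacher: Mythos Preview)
Your proof is correct and is precisely the standard argument: the paper does not write out its own proof but simply refers the reader to Lemma~2.1 of \cite{A2002}, whose argument is exactly the one you give (decompose $f=u+e$, take $U=Z_\mathcal{P}(e)$ for the forward direction; for the converse take the idempotent with zero set $U$ and check that $f-e$ vanishes nowhere). Your added remark that $1/u\in C(X)_\mathcal{P}$ because $\overline{D_{1/u}}\subseteq\overline{D_u}\in\mathcal{P}$ is the only place where one must note something specific to the $C(X)_\mathcal{P}$ setting, and you handle it correctly.
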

	\begin{proof}
		This can be proved by following the proof of Lemma 2.1 \cite{A2002}.
	\end{proof}
	The following corollary is immediate.
	\begin{corollary} \label{c2.5}
		An element $f\in C(X)_\mathcal{P}$ is clean if and only if there exists a $\tau\mathcal{P}$-clopen set $U\subseteq X$ such that either $Z_\mathcal{P}(f)\subseteq U\subseteq X\setminus Z_\mathcal{P}(\boldsymbol{1}-f)$ or $Z_\mathcal{P}(\boldsymbol{1}-f)\subseteq U\subseteq X\setminus Z_\mathcal{P}(f)$.
	\end{corollary}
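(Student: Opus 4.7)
The plan is to derive Corollary \ref{c2.5} directly from Theorem \ref{t2.4}, exploiting the symmetry $f \leftrightarrow \boldsymbol{1}-f$ together with the fact that the complement of a $\tau\mathcal{P}$-clopen set is again $\tau\mathcal{P}$-clopen. The first elementary observation is that $f \in C(X)_\mathcal{P}$ is a clean element if and only if $\boldsymbol{1}-f$ is, because a decomposition $f = e + u$ with $e$ idempotent and $u$ a unit immediately gives $\boldsymbol{1} - f = (\boldsymbol{1}-e) + (-u)$ of the same form, and the converse is identical.

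Next, I would record the two trivial identities $f^{-1}(\{1\}) = Z_\mathcal{P}(\boldsymbol{1}-f)$ and $(\boldsymbol{1}-f)^{-1}(\{1\}) = Z_\mathcal{P}(f)$, so that the hypothesis of Theorem \ref{t2.4} translates cleanly between $f$ and $\boldsymbol{1}-f$. Applying Theorem \ref{t2.4} to $f$ itself yields a $\tau\mathcal{P}$-clopen $U$ with $Z_\mathcal{P}(\boldsymbol{1}-f) \subseteq U \subseteq X\setminus Z_\mathcal{P}(f)$ -- the second alternative in the corollary. Applying Theorem \ref{t2.4} instead to $\boldsymbol{1}-f$ (clean by the remark above) produces a $\tau\mathcal{P}$-clopen $U$ with $Z_\mathcal{P}(f) \subseteq U \subseteq X\setminus Z_\mathcal{P}(\boldsymbol{1}-f)$ -- the first alternative. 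This handles the forward direction.

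For the converse, I would observe that the two alternative inclusions are interchanged by the complementation $U \mapsto X\setminus U$: if $Z_\mathcal{P}(f) \subseteq U \subseteq X\setminus Z_\mathcal{P}(\boldsymbol{1}-f)$, then taking complements yields $Z_\mathcal{P}(\boldsymbol{1}-f) \subseteq X\setminus U \subseteq X\setminus Z_\mathcal{P}(f)$, and by Observation \ref{o2.3}, $X\setminus U$ is itself $\tau\mathcal{P}$-clopen. So without loss of generality one may assume the second alternative holds, and Theorem \ref{t2.4} then immediately gives that $f$ is clean.

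No substantive obstacle is anticipated: the argument is a one-line invocation of Theorem \ref{t2.4} in each direction, and the only bookkeeping required is the swap of the two inclusions under $U \mapsto X \setminus U$, which is immediate from the set-theoretic identities above.
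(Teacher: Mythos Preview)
Your proposal is correct and follows exactly the route the paper intends: the paper declares Corollary~\ref{c2.5} ``immediate'' from Theorem~\ref{t2.4}, and your derivation via the identity $f^{-1}(\{1\})=Z_\mathcal{P}(\boldsymbol{1}-f)$ together with the complementation $U\mapsto X\setminus U$ is precisely how one makes that immediacy explicit. The remark that $f$ is clean iff $\boldsymbol{1}-f$ is clean is not strictly needed (the complementation argument alone shows the two alternatives are equivalent), but it does no harm.
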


	Throughout most of this paper, we shall use the above result to recognize clean elements in $C(X)_\mathcal{P}$. It is obvious that every unit in $C(X)_\mathcal{P}$ is clean. Also, for every idempotent $e\in C(X)_\mathcal{P}$, $e=(\boldsymbol{1}-e)+(2e-\boldsymbol{1})$, where $(\boldsymbol{1}-e)$ is an idempotent in $C(X)_\mathcal{P}$ and $(2e-\boldsymbol{1})$ is a unit in $C(X)_\mathcal{P}$. So, every idempotent in $C(X)_\mathcal{P}$ is clean. It is obvious that if for $f\in C(X)_\mathcal{P}$ is such that $f^{-1}(\{1\})=\emptyset$, then $f$ is clean. Thus if $|f|<\boldsymbol{1}$ or $|f|>\boldsymbol{1}$, then $f$ is clean. Moreover, for a unit $f\in C(X)_\mathcal{P}$ with the property that it is not an onto function, $rf$ is a clean element in $C(X)_\mathcal{P}$ for some $r\in \mathbb{R}$.	Not only this, for a given clean element $f\in C(X)_\mathcal{P}$, we have $|f|^r$ is clean for all $r>0$. Also, $\frac{f^2}{f^2+\boldsymbol{1}}$ is clean for any $f\in C(X)_\mathcal{P}$.
	
	It is observed for $\mathcal{P}=\{\emptyset \}$ in Remark 2.3 of \cite{A2002} that the sum or product of two clean elements may not be clean. However we claim that the product of an idempotent and a clean element in $C(X)_\mathcal{P}$ is clean. Indeed, let $e,f\in C(X)_\mathcal{P}$ where $e$ is an idempotent and $f$ is a clean element in $C(X)_\mathcal{P}$. So, by Theorem \ref{t2.4} and Observation \ref{o2.3}, there exists an idempotent $e'\in C(X)_\mathcal{P}$ such that $f^{-1}(\{1\})\subseteq Z_\mathcal{P}(e')\subseteq X\setminus Z_\mathcal{P}(f)$. From this we have $(ef)^{-1}(\{1\})\subseteq Z_\mathcal{P}(e_0)\subseteq X\setminus Z_\mathcal{P}(ef)$, where $e_0=e'(\boldsymbol{1}-e)$ is an idempotent in $C(X)_\mathcal{P}$. It follows from Theorem \ref{t2.4} and Observation \ref{o2.3} that $ef$ is a clean element in $C(X)_\mathcal{P}$. 
	
	As we can see, Theorem \ref{t2.4} is a very useful tool to recognize clean elements in $C(X)_\mathcal{P}$. However, the analog to this result fails in case of $C^*(X)_\mathcal{P}$ if we take $X=(0,1)$ endowed with the subspace topology inherited from $\mathbb{R}$ with usual topology and $f(x)=x$ for all $x\in X$ and $\mathcal{P}=\{\emptyset \}$, then the only idempotents in $C^*(X)_\mathcal{P}$ are $\boldsymbol{1}$ and $\boldsymbol{0}$. So if $f$ is the sum of a unit $u\in C^*(X)_\mathcal{P}$ and an idempotent in $C^*(X)_\mathcal{P}$, then either $u(x)=x-1$ for all $x\in X$ or $u(x)=x$ for all $x\in X$, neither of which are units in $C^*(X)_\mathcal{P}$. Thus $f$ is not a clean element in $C^*(X)_\mathcal{P}$ even though $f^{-1}(\{1\})=\emptyset$ which is a $\tau \mathcal{P}$-clopen subset of $X\setminus Z_\mathcal{P}(f)$. The necessary part of Theorem \ref{t2.4}, however, works even for clean elements in $C^*(X)_\mathcal{P}$. We have the next sufficient condition that ensures that an element $f\in C^*(X)_\mathcal{P}$ is clean.
	
	\begin{theorem} \label{t2.6}
		Let $f\in C^*(X)_\mathcal{P}$ and define for each $a\in \mathbb{R}$, $A_a=\{x\in X\colon f(x)\geq a \}$. If there exist $a,b\in \mathbb{R}$ with the  following properties that \begin{enumerate}
			\item $0<a<b<1$, and
			\item there exists a $\tau \mathcal{P}$-clopen subset $U$ of $X$ such that $A_b\subseteq U\subseteq A_a$,
		\end{enumerate} then $f$ is a clean element of $C^*(X)_\mathcal{P}$.
	\end{theorem}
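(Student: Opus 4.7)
The strategy is to produce an explicit decomposition $f = u + e$ by reading off the idempotent from the $\tau\mathcal{P}$-clopen set $U$, and then showing that the complement $u := f - e$ is bounded away from zero, so invertible in $C^*(X)_\mathcal{P}$. The interplay between the hypotheses $0<a<b<1$ and the inclusion $A_b\subseteq U\subseteq A_a$ is precisely what forces $u$ to be positive on $U$ and negative on its complement, with a uniform gap from $0$.

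First I would use Observation \ref{o2.3} to pick an idempotent $e\in C(X)_\mathcal{P}$ with $U=Z_\mathcal{P}(e)$, so that $e$ takes the value $0$ on $U$ and $1$ on $X\setminus U$. Because $e$ is bounded and $f\in C^*(X)_\mathcal{P}$, the element $u:=f-e$ lies in $C^*(X)_\mathcal{P}$. The next step is the pointwise estimate: on $U$ the containment $U\subseteq A_a$ gives $u(x)=f(x)\geq a>0$, while on $X\setminus U\subseteq X\setminus A_b$ one has $f(x)<b$ and hence $u(x)=f(x)-1<b-1<0$, so $|u(x)|>1-b>0$. Combining the two cases yields $|u(x)|\geq \min(a,\,1-b)>0$ for every $x\in X$.

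Now I would verify that $u$ is a unit of $C^*(X)_\mathcal{P}$. The function $1/u$ is bounded by $\max\bigl(1/a,\,1/(1-b)\bigr)$ by the previous bound, and at each point where $u$ is continuous $1/u$ is also continuous; therefore $D_{1/u}\subseteq D_u\subseteq D_f\cup D_e$, whence $\overline{D_{1/u}}\subseteq \overline{D_f}\cup \overline{D_e}\in \mathcal{P}$. Thus $1/u\in C^*(X)_\mathcal{P}$, showing that $u$ is invertible in $C^*(X)_\mathcal{P}$. Since $f=u+e$ with $u$ a unit and $e$ an idempotent in $C^*(X)_\mathcal{P}$, $f$ is clean, as desired.

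The only delicate point is the correct choice of idempotent: one must take $e$ to vanish on $U$ (rather than on $X\setminus U$), because only then does $f-e$ stay positive on $U$ and simultaneously fall below $b-1<0$ on $X\setminus U$; the opposite choice would leave $f-e=f$ on $X\setminus U$, which the hypotheses do not bound away from zero. The rest is routine, and the conditions $a>0$ and $b<1$ appear exactly once each, guaranteeing the two-sided uniform separation from zero that upgrades $u$ from a nonvanishing function to an honest unit of $C^*(X)_\mathcal{P}$.
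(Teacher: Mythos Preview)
Your proof is correct and follows essentially the same route as the paper: invoke Observation~\ref{o2.3} to obtain an idempotent $e$ with $U=Z_\mathcal{P}(e)$, set $u=f-e$, and use the inclusions $A_b\subseteq U\subseteq A_a$ together with $0<a<b<1$ to bound $|u|$ away from zero, concluding that $u$ is a unit in $C^*(X)_\mathcal{P}$. You are in fact slightly more careful than the paper, since you explicitly verify that $1/u$ is bounded and that $\overline{D_{1/u}}\in\mathcal{P}$, and you correctly write $|u|\geq\min(a,1-b)$ where the paper's $\max$ is a typo.
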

	\begin{proof}
		By Observation \ref{o2.3}, there exists $e\in C^*(X)_\mathcal{P}$ such that $U=Z_\mathcal{P}(e)$. We define $u(x)=f(x)-e(x)$. We show that $u$ is a unit in $C^*(X)_\mathcal{P}$. Then for $x\in U$, $x\in A_a$ and so $u(x)\geq a$. Again if $x\notin U$, then $x\notin A_b$ and so $u(x)<b-1$. Using the fact that $f$ is bounded on $X$ and hence $u$ is bounded on $X$. Further $|u(x)|\geq \max\{a,1-b\}$ for all $x\in X$. Thus $u$ is a unit in $C^*(X)_\mathcal{P}$ and so $f$ is clean.
	\end{proof}
	
	F. Azarpanah established in \cite{A2002} that $C(X)$ is clean if and only if $X$ is a strongly zero-dimensional space. We attempt to achieve an analogous version of this result in our setting.
	
	\begin{theorem} \label{t2.7}
		The following are equivalent for a $\tau \mathcal{P}$-space $(X,\tau, \mathcal{P})$.
		\begin{enumerate}
			\item \label{2.71} $C(X)_\mathcal{P}$ is a clean ring.
			\item \label{2.72} $C^*(X)_\mathcal{P}$ is a clean ring.
			\item \label{2.73} The set of clean elements in $C(X)_\mathcal{P}$ is a subring of $C(X)_\mathcal{P}$.
			\item \label{2.74} $X$ is a $\tau \mathcal{P}$-zero-dimensional space.
			\item \label{2.75} Every zero divisor of $C(X)_\mathcal{P}$ is clean.
			\item \label{2.76} $C(X)_\mathcal{P}$ has a clean prime ideal.
		\end{enumerate}
	\end{theorem}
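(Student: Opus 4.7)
The strategy is to orbit the six conditions around the topological criterion (\ref{2.74}): I would prove $(\ref{2.71}) \Leftrightarrow (\ref{2.74}) \Leftrightarrow (\ref{2.72})$ as the backbone and then attach $(\ref{2.73}), (\ref{2.75}), (\ref{2.76})$ by short implications ending at $(\ref{2.74})$ or $(\ref{2.71})$. The core tools will be Theorem~\ref{t2.4} and Theorem~\ref{t2.6}, Lemma~\ref{l2} identifying $\mathcal{P}$-completely separated sets with subsets of disjoint zero sets, and Observation~\ref{o2.3} linking $\tau\mathcal{P}$-clopens to zero sets of idempotents.

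For $(\ref{2.74}) \Leftrightarrow (\ref{2.71})$, in one direction the zero sets $Z_\mathcal{P}(f)$ and $f^{-1}(\{1\}) = Z_\mathcal{P}(f-\boldsymbol{1})$ of any $f \in C(X)_\mathcal{P}$ are disjoint, hence $\mathcal{P}$-completely separated by Lemma~\ref{l2}, and $(\ref{2.74})$ supplies a $\tau\mathcal{P}$-clopen $U$ with $f^{-1}(\{1\}) \subseteq U \subseteq X \setminus Z_\mathcal{P}(f)$, so Theorem~\ref{t2.4} makes $f$ clean. Conversely, given $\mathcal{P}$-completely separated $A, B \subseteq X$, I would pick $f \in C(X)_\mathcal{P}$ with $f(A) = \{0\}$, $f(B) = \{1\}$, $\boldsymbol{0} \le f \le \boldsymbol{1}$, use $(\ref{2.71})$ and Theorem~\ref{t2.4} to obtain a $\tau\mathcal{P}$-clopen $U$ with $B \subseteq f^{-1}(\{1\}) \subseteq U \subseteq X \setminus Z_\mathcal{P}(f) \subseteq X \setminus A$, and take $V = X \setminus U$ as the separator. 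For $(\ref{2.74}) \Leftrightarrow (\ref{2.72})$ the same template is transplanted to $C^*(X)_\mathcal{P}$ via Theorem~\ref{t2.6}: separating the disjoint zero sets $\{|f| \le \frac{1}{3}\}$ and $\{|f-\boldsymbol{1}| \le \frac{1}{3}\}$ by a $\tau\mathcal{P}$-clopen $U$ yields an idempotent $e = \chi_{X \setminus U}$ for which $u = f - e$ satisfies $|u| > \frac{1}{3}$ everywhere and is therefore a unit in $C^*(X)_\mathcal{P}$; conversely, for a $[0,1]$-valued separating $f$, the clean decomposition $f = u + e$ combined with the non-vanishing of $u$ forces $e \equiv 1$ on $A$ and $e \equiv 0$ on $B$, so $X \setminus Z_\mathcal{P}(e)$ is the required $\tau\mathcal{P}$-clopen.

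The implications $(\ref{2.71}) \Rightarrow (\ref{2.73})$, $(\ref{2.71}) \Rightarrow (\ref{2.75})$, $(\ref{2.71}) \Rightarrow (\ref{2.76})$ are immediate. For $(\ref{2.73}) \Rightarrow (\ref{2.74})$, the key observation is that whenever $\boldsymbol{0} \le f \le \boldsymbol{1}$ in $C(X)_\mathcal{P}$, $f/2 - \boldsymbol{1} \le -\frac{1}{2}$ is a unit, so $f/2 = (f/2 - \boldsymbol{1}) + \boldsymbol{1}$ is clean; $(\ref{2.73})$ then makes $f = f/2 + f/2$ clean, and Theorem~\ref{t2.4} applied to a separating function yields $(\ref{2.74})$. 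For $(\ref{2.75}) \Rightarrow (\ref{2.74})$, given $\mathcal{P}$-completely separated $A, B$ (with $A \ne \emptyset$), Lemma~\ref{l2} yields disjoint zero-set neighbourhoods $Z_1 \supseteq A$, $Z_2 \supseteq B$; picking $h_i \in C(X)_\mathcal{P}$ with $Z_\mathcal{P}(h_i) = Z_i$, the function $f = |h_1|/(|h_1| + |h_2|) \in C(X)_\mathcal{P}$ satisfies $f(A) = \{0\}$, $f(B) = \{1\}$, and $int_{X_\mathcal{P}} Z_\mathcal{P}(f) \supseteq A \ne \emptyset$, making $f$ a zero divisor by Lemma~\ref{l0}; $(\ref{2.75})$ and Theorem~\ref{t2.4} then deliver the clopen separator.

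The hardest step will be $(\ref{2.76}) \Rightarrow (\ref{2.74})$. Given a clean prime ideal $P$, for every $f \in P$ with decomposition $f = u + e$, primeness forces $e \notin P$ (else $u = f - e \in P$ would be a unit lying in $P$), and therefore $\boldsymbol{1} - e \in P$ via $e(\boldsymbol{1}-e) = \boldsymbol{0} \in P$, so $P$ houses all the idempotents $\chi_{X \setminus U}$ that Theorem~\ref{t2.4} associates to its elements. The main obstacle will be arranging, for a given pair of $\mathcal{P}$-completely separated sets $A, B$, a separating zero divisor that actually lies in $P$, perhaps by multiplying the separator constructed in $(\ref{2.75}) \Rightarrow (\ref{2.74})$ by an appropriate element of $P$ and controlling its value on $B$, so that the corresponding idempotent produces the desired $\tau\mathcal{P}$-clopen; this mirrors the treatment of the analogous step in Azarpanah's Proposition~2.4 of \cite{A2002}.
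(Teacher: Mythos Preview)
Your overall architecture is sound and tracks the paper closely: the backbone $(\ref{2.71})\Leftrightarrow(\ref{2.74})$ is argued exactly as in the paper, and your $(\ref{2.72})\Leftrightarrow(\ref{2.74})$ via Theorem~\ref{t2.6} is a minor reorganisation of the paper's $(\ref{2.71})\Leftrightarrow(\ref{2.72})$. Your $(\ref{2.73})\Rightarrow(\ref{2.74})$, writing a $[0,1]$-valued separator as $f/2+f/2$ with $f/2$ clean, is in fact cleaner than the paper's argument. Your direct $(\ref{2.75})\Rightarrow(\ref{2.74})$ is also fine; the paper instead routes $(\ref{2.75})\Rightarrow(\ref{2.76})$ through a minimal prime and then proves $(\ref{2.76})\Rightarrow(\ref{2.71})$.

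The one genuine gap is your sketch for $(\ref{2.76})\Rightarrow(\ref{2.74})$. Your idea of ``multiplying the separator by an appropriate element of $P$'' does not obviously work: if $f$ separates $A$ and $B$ and $p\in P$, then $pf\in P$ but you lose control of $(pf)(B)$, so Theorem~\ref{t2.4} applied to $pf$ no longer produces a clopen separating $A$ from $B$. The trick the paper (and Azarpanah) actually uses is different: given $f$ with $A=Z_\mathcal{P}(f)$ and $B=f^{-1}(\{1\})$, set $C=\{f\le\tfrac13\}$ and $D=\{f\ge\tfrac13\}$, choose $g$ separating $A$ from $D$ and $h$ separating $B$ from $C$; since $C\cup D=X$ one gets $gh=\boldsymbol{0}\in P$, so primeness forces $g\in P$ or $h\in P$, and cleanness of whichever one lands in $P$ then yields, via Theorem~\ref{t2.4}, a $\tau\mathcal{P}$-clopen separating $A$ and $B$. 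Your observation that $\boldsymbol{1}-e\in P$ for the idempotent in a clean decomposition of an element of $P$ is correct but is not needed once you have this product-zero manoeuvre.
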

	\begin{proof}
		(\ref{2.71})$\iff$ (\ref{2.72}): Let us first assume that $C(X)_\mathcal{P}$ is a clean ring and $f\in C^*(X)_\mathcal{P}$. Define $A=\{x\in X\colon f(x)\geq \frac{2}{3}  \}$ and $B=\{x\in X\colon f(x)\leq \frac{1}{3} \}$. Then $A$ and $B$ are disjoint zero sets in $X$ and so are $\mathcal{P}$-completely separated sets. So there exists $g\in C(X)_\mathcal{P}$ such that $g(A)=\{1\}$ and $g(B)=\{0\}$. By Theorem \ref{t2.4}, there exists a $\tau \mathcal{P}$-clopen subset $U$ of $X$ such that $g^{-1}(\{1\})\subseteq U\subseteq X\setminus Z_\mathcal{P}(g)$. Also, $A_{\frac{2}{3}}=A\subseteq g^{-1}(\{1\})$ and $X\setminus Z_\mathcal{P}(g)\subseteq X\setminus B\subseteq A_{\frac{1}{3}}$. By Theorem \ref{t2.6}, $f$ is clean. Conversely, we assume that $C^*(X)_\mathcal{P}$ is a clean ring. Let $f\in C(X)_\mathcal{P}$ and define $g=(\boldsymbol{-1}\vee f)\wedge \boldsymbol{1}\in C^*(X)_\mathcal{P}$. By (\ref{2.72}) $g$ is clean. Also, by Theorem \ref{t2.4} and using $f^{-1}(\{1\})\subseteq g^{-1}(\{1\})$ and $Z_\mathcal{P}(f)=Z_\mathcal{P}(g)$, it is clear that $f$ is clean. This ensures that the first two assertions are equivalent. 
		
		(\ref{2.71}) $\implies$ (\ref{2.73}) is obvious. Now, we assume (\ref{2.73}) to be true and let $A,B\subseteq X$ be two $\mathcal{P}$-completely separated subsets of $X$. Then there exists $f\in C(X)_\mathcal{P}$ such that $|f|\leq \frac{1}{2}$, $f(A)=\{0\}$ and $f(B)=\{\frac{1}{2}\}$. Since $f^{-1}(\{1\})=\emptyset$, $f$ is a clean element in $C(X)_\mathcal{P}$. 
		
		(\ref{2.71})$\iff$(\ref{2.74}): Let $(1)$ hold. Let $A$ and $B$ be two $\mathcal{P}$-completely separated subsets of $X$. Then there exists $g\in C(X)_\mathcal{P}$ such that $g(A)=\{0\}$ and $g(B)=\{1\}$. By (\ref{2.71}), $g$ is a clean element. It follows from Theorem \ref{t2.4} that there exists a $\tau \mathcal{P}$-clopen set $U$ such that $g^{-1}(\{1\})\subseteq U\subseteq X\setminus Z_\mathcal{P}(g)$. But $A\subseteq Z_\mathcal{P}(g)$ and $B\subseteq g^{-1}(\{1\})$. This ensures that $X$ is a $\tau \mathcal{P}$-zero-dimensional space. Conversely, let $X$ be a $\tau \mathcal{P}$-zero-dimensional space and $f\in C(X)_\mathcal{P}$. Then we need to show that there exists a $\tau \mathcal{P}$-clopen set $U$ such that $f^{-1}(\{1\})\subseteq U\subseteq X\setminus Z_\mathcal{P}$. We define $A=\{x\in X\colon f(x)\geq \frac{1}{2} \}$ and $B=Z_\mathcal{P}(f)$. Then $A$ and $B$ are disjoint zero sets and are thus $\mathcal{P}$-completely separated sets. The rest follows directly since $f^{-1}(\{1\})\subseteq A$. This proves (\ref{2.74})$\implies $(\ref{2.71}).
		
		Again (\ref{2.71})$\implies$(\ref{2.75}) is obvious. 
		
		We next show that (\ref{2.75})$\implies$(\ref{2.76}). Let every zero divisor of $C(X)_\mathcal{P}$ be clean. If $P$ is a minimal prime ideal of $C(X)_\mathcal{P}$. Then it follows from Lemma \ref{l1.4} that each $f\in P$ is a zero divisor, as $P(f)\subseteq P$. Thus $P$ is a clean prime ideal. If $C(X)_\mathcal{P}$ does not contain any minimal prime ideals, then it follows from Theorem 5 in \cite{AC2002} that $C(X)_\mathcal{P}$ is a clean ring. 
		
		Finally, we need to show that (\ref{2.76})$\implies$(\ref{2.71}). By (\ref{2.76}), there exists a clean prime ideal $P$ of $C(X)_\mathcal{P}$. Let $f\in C(X)_\mathcal{P}$. Define $A=Z_\mathcal{P}(f)$ and $B=f^{-1}(\{1\})$. It follows from the discussions following Corollary \ref{c2.5} that if either $A=\emptyset$ or $B=\emptyset$, then we are done. So we assume that $A$ and $B$ are both non-empty sets. Set two zero sets $C=\{x\in X\colon f(x)\leq \frac{1}{3} \}$ and $D=\{x\in X\colon f(x)\geq \frac{1}{3} \}$. Since $A$ and $D$ are disjoint zero sets, they are $\mathcal{P}$-completely separated. Analogously, $B$ and $C$ are pairs of $\mathcal{P}$-completely separated sets. Therefore there exists $g,h\in C(X)_\mathcal{P}$ such that $g(A)=\{1\}$, $g(D)=\{0\}$, $h(B)=\{1\}$ and $h(C)=\{0\}$. Clearly $gh=\boldsymbol{0}\in P$. So either $g\in P$ or $h\in P$ which implies that either $g$ or $h$ is clean. If $g$ is clean, then there exists a $\tau \mathcal{P}$-clopen set $U$ such that $g^{-1}(\{1\})\subseteq U\subseteq X\setminus Z_\mathcal{P}(g)$. But $A\subseteq g^{-1}(\{1\})$ and $B\subseteq D\subseteq Z_\mathcal{P}(g)$. Again if $h$ is clean, then there exists a $\tau \mathcal{P}$-clopen set $V$ such that $h^{-1}(\{1\})\subseteq V\subseteq X\setminus Z_\mathcal{P}(h)$. But $B\subseteq g^{-1}(\{1\})$ and $A\subseteq C\subseteq Z_\mathcal{P}(h)$. It follows from Corollary \ref{c2.5} that $f$ is clean.
	\end{proof} 

	The following example follows directly from the above theorem and Example \ref{c2.2}.
	\begin{example} \label{c2.3}
		$C(\beta \mathbb{N})_\mathcal{P}$ is a clean ring for any choice of ideal $\mathcal{P}$ of closed sets.
	\end{example}
	
	A ring $R$ is said to be an exchange ring if and only if for every $a\in R$ there exist $b,c\in R$ such that $c(1-a)(1-ba)=1-ba$ and $bab=b$. \cite{M1972}.
	It is stated in \cite{A2002} that $C(X)$ is a clean ring if and only if it is an exchange ring. It is natural to ask whether the analog to the above statement holds for $C(X)_\mathcal{P}$. We answer this question in the affirmative.
	
	\begin{proposition} \label{p2.8}
		$C(X)_\mathcal{P}$ is a clean ring if and only if it is an exchange ring.
	\end{proposition}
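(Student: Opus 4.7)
The plan is to treat the two directions of the equivalence separately. The forward direction (clean $\Rightarrow$ exchange) is a short algebraic verification valid in any commutative ring with identity. Given $a\in C(X)_\mathcal{P}$ written as $a = e + u$ with $e^2 = e$ and $u$ a unit, I would set $b := u^{-1}(\boldsymbol{1}-e)$ and $c := -u^{-1}$. Using $e(\boldsymbol{1}-e) = \boldsymbol{0}$ and commutativity, one checks $ba = (\boldsymbol{1}-e)(e+u)u^{-1} = \boldsymbol{1}-e$, hence $bab = (\boldsymbol{1}-e)b = u^{-1}(\boldsymbol{1}-e)^2 = b$ and $\boldsymbol{1}-ba = e$. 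Moreover $(\boldsymbol{1}-a)(\boldsymbol{1}-ba) = (\boldsymbol{1}-e-u)e = -ue$, so $c(\boldsymbol{1}-a)(\boldsymbol{1}-ba) = (-u^{-1})(-ue) = e = \boldsymbol{1}-ba$. These are precisely the two relations defining an exchange ring.

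For the converse (exchange $\Rightarrow$ clean), the strategy is to invoke Theorem \ref{t2.7} by showing that $X$ must be $\tau\mathcal{P}$-zero-dimensional. Let $A, B \subseteq X$ be $\mathcal{P}$-completely separated and pick $g \in C(X)_\mathcal{P}$ with $g(A) = \{0\}$ and $g(B) = \{1\}$. Applying the exchange hypothesis to $g$ produces $b, c \in C(X)_\mathcal{P}$ satisfying $bgb = b$ and $c(\boldsymbol{1}-g)(\boldsymbol{1}-bg) = \boldsymbol{1}-bg$. Setting $e := bg$, commutativity together with $bgb = b$ yields $e^2 = (bg)(bg) = (bgb)g = bg = e$, so $e$ is an idempotent of $C(X)_\mathcal{P}$. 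Pointwise evaluation now pins $e$ down on $A$ and $B$: for $x \in A$ we have $g(x) = 0$, so $e(x) = b(x)g(x) = 0$; for $x \in B$ we have $g(x) = 1$, forcing $(\boldsymbol{1}-g)(\boldsymbol{1}-bg)$ to vanish at $x$, whence $\boldsymbol{1}-bg$ vanishes at $x$ and $e(x) = 1$. By Observation \ref{o2.3}, $U := Z_\mathcal{P}(e)$ is a $\tau\mathcal{P}$-clopen subset of $X$ with $A \subseteq U \subseteq X \setminus B$, so $X$ is $\tau\mathcal{P}$-zero-dimensional and Theorem \ref{t2.7} concludes that $C(X)_\mathcal{P}$ is clean.

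I do not foresee a serious obstacle: the forward direction is purely formal and holds in every commutative ring with identity, while the reverse direction feeds directly into the $\tau\mathcal{P}$-zero-dimensional criterion already proved in Theorem \ref{t2.7}. The only step needing a touch of care is the pointwise identification of the idempotent $e$ on $A$ and $B$, which rests only on the subring inclusion $C(X)_\mathcal{P} \subseteq \mathbb{R}^X$ and uses no continuity properties of $b$, $c$, or $g$.
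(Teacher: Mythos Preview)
Your argument is correct in both directions. The converse is essentially the paper's own approach: both extract the idempotent $e=bg$ from the exchange relations and use the second relation $c(\boldsymbol{1}-g)(\boldsymbol{1}-bg)=\boldsymbol{1}-bg$ to force $e=1$ where $g=1$. The only cosmetic difference is that the paper applies this to an arbitrary $f$ and invokes Theorem~\ref{t2.4} (the clopen set $Z_\mathcal{P}(\boldsymbol{1}-fg)$ separates $f^{-1}(\{1\})$ from $Z_\mathcal{P}(f)$, so $f$ itself is clean), whereas you apply it to a separating function for a $\mathcal{P}$-completely separated pair and then pass through Theorem~\ref{t2.7}; the underlying mechanism is identical.

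The forward direction is where your proof genuinely diverges. The paper builds $g$ and $h$ by a piecewise construction on $Z_\mathcal{P}(e)$ and its complement (taking $g=1/f$ on $Z_\mathcal{P}(e)$ and $0$ off it, and $h=1/(1-f)$ off $Z_\mathcal{P}(e)$ and $0$ on it), and then has to argue that these functions land in $C(X)_\mathcal{P}$ by controlling their discontinuity sets. Your choice $b=u^{-1}(\boldsymbol{1}-e)$, $c=-u^{-1}$ is purely ring-theoretic and avoids any appeal to the pointwise or topological structure of $C(X)_\mathcal{P}$; it shows, as is well known, that \emph{every} commutative clean ring is an exchange ring. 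This is both shorter and more general than the paper's construction, and it sidesteps the slightly delicate bookkeeping of discontinuity sets that the paper's piecewise definitions require.
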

	\begin{proof}
		Let us assume that $C(X)_\mathcal{P}$ is a clean ring and let $f\in C(X)_\mathcal{P}$. Then there exist $e,u\in C(X)_\mathcal{P}$ such that $e$ is an idempotent and $u$ is a unit. Define $g(x)=\begin{cases}
			0, \text{ when }x\notin Z_\mathcal{P}(e) \\ \frac{1}{f(x)}, \text{ when }x\in Z_\mathcal{P}(e)
		\end{cases}$ and $h(x)=\begin{cases}
			\frac{1}{1-f(x)} \text{ when }x\notin Z_\mathcal{P}(e) \\ 0 \text{ when }x\in Z_\mathcal{P}(e)
		\end{cases}$. Then $g,h\in C(X)_\mathcal{P}$ as $D_g\subseteq D_f$ and $D_h\subseteq D_f$. It is clear that $gfg=g$ and $h(\boldsymbol{1}-f)(\boldsymbol{1}-fg)=\boldsymbol{1}-fg$. This shows that $C(X)_\mathcal{P}$ is an exchange ring.
		
		Conversely, let $C(X)_\mathcal{P}$ be an exchange ring. Then there exists $gh\in C(X)_\mathcal{P}$ such that $gfg=g$ and $h(\boldsymbol{1}-f)(\boldsymbol{1}-fg)=\boldsymbol{1}-fg$. It follows that $Z_\mathcal{P}(\boldsymbol{1}-fg)$ is a $\tau \mathcal{P}$-clopen set in $X$ and $f^{-1}({1})\subseteq U\subseteq X\setminus Z_\mathcal{P}(f)$.
	\end{proof}
	
	The following corollary follows from Theorem \ref{t2.7} and Proposition \ref{p2.8}.
	
	\begin{corollary} \label{c2.9}
		For a $\tau \mathcal{P}$-space $X$, $C(X)_\mathcal{P}$ is an exchange ring if and only if $X$ is a $\tau \mathcal{P}$-zero-dimensional space.
	\end{corollary}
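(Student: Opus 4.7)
The plan is simply to chain together the two results the corollary is explicitly labeled as following from. Specifically, I would invoke Proposition \ref{p2.8} to rewrite the hypothesis ``$C(X)_\mathcal{P}$ is an exchange ring'' as ``$C(X)_\mathcal{P}$ is a clean ring'', and then apply the equivalence (\ref{2.71})$\iff$(\ref{2.74}) of Theorem \ref{t2.7} to convert the cleanness of $C(X)_\mathcal{P}$ into the $\tau\mathcal{P}$-zero-dimensionality of $X$.

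More precisely, I would argue both directions in one line. Suppose $C(X)_\mathcal{P}$ is an exchange ring; then by Proposition \ref{p2.8} it is a clean ring, and hence by Theorem \ref{t2.7} the space $X$ is $\tau\mathcal{P}$-zero-dimensional. Conversely, if $X$ is $\tau\mathcal{P}$-zero-dimensional, then Theorem \ref{t2.7} again gives that $C(X)_\mathcal{P}$ is clean, and Proposition \ref{p2.8} then upgrades this to being an exchange ring.

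There is no real obstacle here, since the corollary is essentially a statement of transitivity of biconditionals; the substantive content was already placed in Theorem \ref{t2.7} and Proposition \ref{p2.8}. The only thing to be careful about is to cite the correct clauses of Theorem \ref{t2.7}, namely the equivalence of (\ref{2.71}) and (\ref{2.74}), rather than getting drawn into the longer chain of equivalent conditions stated there. A two-sentence proof should therefore suffice.
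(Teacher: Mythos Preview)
Your proposal is correct and matches the paper exactly: the paper gives no explicit proof but simply states that the corollary follows from Theorem~\ref{t2.7} and Proposition~\ref{p2.8}, which is precisely the two-step biconditional chain you describe. Nothing further is needed.
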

	
	Ye \cite{Ye} defined an element $r$ of ring $R$ to be semiclean if $r=a+u$, where $a$ is a periodic element, that is $a^k=a^l$ for some positive integers $k$ and $l$ such that $k\neq l$ and $u$	is a unit in R. A ring $R$ is called a semiclean ring if every element of $R$ is	semiclean. Further, an element $a\in R$ is said to be weakly clean if and only if $a$ can be expressed as $a=u+e$ or $a=u-e$ where $u$ is a unit and $e$  is an idempotent in $R$. A ring $R$ is said to be a weakly clean ring if every element of $R$ is weakly clean \cite{AA2006}. Clearly a ring $R$ is semiclean if it is weakly clean and $R$ is weakly clean if it is a clean ring. N. Arora and S. Kundu showed in \cite{AK2014} that in $C(X)$, the concept of clean, semiclean and weakly clean are equivalent. We establish a similar equivalence using Observation \ref{o2.3}.
	
	\begin{theorem} \label{t2.10}
		For a $\tau\mathcal{P}$-space $(X,\tau, \mathcal{P})$, the following statements are equivalent.
		\begin{enumerate}
			\item $C(X)_\mathcal{P}$ is clean.
			\item $C(X)_\mathcal{P}$ is weakly clean.	
			\item $C(X)_\mathcal{P}$ is semiclean.		
		\end{enumerate}
	\end{theorem}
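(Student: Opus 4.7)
The implications $(1)\implies(2)\implies(3)$ follow at once from definitions: a clean decomposition $f = u + e$ is already a weakly clean one, and any idempotent $e$, as well as $-e$ (which satisfies $(-e)^3 = (-e)^1$), is periodic, so every weakly clean element is semiclean. My entire effort would go into $(3)\implies(1)$, and the plan is to prove that semicleanness of $C(X)_\mathcal{P}$ forces $X$ to be $\tau \mathcal{P}$-zero-dimensional, whence Theorem \ref{t2.7} would finish the job.

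The key elementary observation I would exploit is that every periodic $a \in C(X)_\mathcal{P}$ takes values in $\{-1, 0, 1\}$: if $a^k = a^l$ with $k > l$, then pointwise $a(x)^l(a(x)^{k-l} - 1) = 0$, and the real solutions of this lie in $\{-1, 0, 1\}$. This three-value restriction is what turns a semiclean decomposition into usable sign information.

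To establish $\tau \mathcal{P}$-zero-dimensionality, I would start with $\mathcal{P}$-completely separated sets $A, B \subseteq X$, pick $h \in C(X)_\mathcal{P}$ with $h(A) = \{0\}$ and $h(B) = \{1\}$, and form $\phi = 3h - \boldsymbol{1}$, so that $\phi(A) = \{-1\}$ and $\phi(B) = \{2\}$. Writing a semiclean decomposition $\phi = a + u$ with $a$ periodic and $u$ a unit, I would read off that on $A$ one has $u = -1 - a \in \{-2, -1, 0\}$, forcing $u < 0$ on $A$ since $u$ is a unit; similarly on $B$ one gets $u = 2 - a \in \{1, 2, 3\}$, so $u > 0$ on $B$.

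The separating clopen would then be extracted from the sign of $u$: define $e \colon X \to \mathbb{R}$ by $e(x) = 1$ when $u(x) > 0$ and $e(x) = 0$ when $u(x) < 0$. At any point where $u$ is continuous, $u$ has constant sign on a neighbourhood, so $e$ is locally constant there; this would yield $D_e \subseteq D_u$ and hence $\overline{D_e} \in \mathcal{P}$, making $e$ an idempotent of $C(X)_\mathcal{P}$. By Observation \ref{o2.3}, $U := e^{-1}(\{1\})$ would be $\tau \mathcal{P}$-clopen with $B \subseteq U$ and $A \cap U = \emptyset$; passing to the complement gives a $\tau \mathcal{P}$-clopen set $X \setminus U$ with $A \subseteq X \setminus U \subseteq X \setminus B$, completing the $\tau \mathcal{P}$-zero-dimensionality check. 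The one step I expect to spell out with a little care is the inclusion $D_e \subseteq D_u$, but that reduces to the local sign-constancy of the unit $u$.
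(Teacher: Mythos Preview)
Your argument is correct. Both you and the paper rely on the same core observation---that a periodic element of $C(X)_\mathcal{P}$ takes values only in $\{-1,0,1\}$, so the unit $u$ in any semiclean decomposition has a definite sign on the relevant sets, and the set $\{u>0\}$ (equivalently $\{u\geq 0\}$) is $\tau\mathcal{P}$-clopen. The difference is purely in packaging: the paper works directly, taking an arbitrary $f$, decomposing $g=2f-\boldsymbol{1}$ semicleanly, and showing that the idempotent $e=\chi_{\{u<0\}}$ makes $f-e$ a unit, so $f$ itself is clean. You instead route through Theorem~\ref{t2.7}, applying the sign trick to a separating function $\phi=3h-\boldsymbol{1}$ to produce the $\tau\mathcal{P}$-clopen set required by $\tau\mathcal{P}$-zero-dimensionality. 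Your detour costs one extra citation but makes the topological content explicit; the paper's version is slightly more self-contained. The affine shifts $2f-\boldsymbol{1}$ versus $3h-\boldsymbol{1}$ are interchangeable---any shift that pushes the two target values to opposite sides of the interval $[-1,1]$ would work.
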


	\begin{proof}
		$(1)\implies (2)\implies (3)$ is obvious. We just need to show that $C(X)_\mathcal{P}$ is clean if it is semiclean. Let us assume that $C(X)_\mathcal{P}$ is semiclean. Also let $f\in C(X)_\mathcal{P}$ and $g=2f-\boldsymbol{1}$. Then there exists a unit $u\in C(X)_\mathcal{P}$ and a periodic element $p\in C(X)_\mathcal{P}$ such that $g=u+p$. Define $K=\{ x\in X\colon u(x)<0 \}=\{ x\in X\colon u(x)\leq 0 \}$. Then $K$ is a $\tau \mathcal{P}$-clopen subset of $X$. By Observation \ref{o2.3}, there exists an idempotent element $e\in C(X)_\mathcal{P}$ such that $K=coz(e)=Z_\mathcal{P}(\boldsymbol{1}-e)$. Since $p$ is a periodic elememt in $C(X)_\mathcal{P}$, $p(X)\subseteq \{ 0,1,-1 \}$. This ensures that $u(x)\in \{1,2 \}$ on $Z_\mathcal{P}(\boldsymbol{1}-f)$ and $u(x)\in \{-1,-2 \}$ on $Z_\mathcal{P}(f)$. Thus we have $f-e$ is a unit in $C(X)_\mathcal{P}$. This proves $(1)$.
	\end{proof}

	McGovern introduced the notion of an almost clean element of a ring $R$ in \cite{M2003}. An element in a ring $R$ is said to be an almost clean element
	if it can be expressed as the sum of a regular element (an element which is not a zero divisor) and an idempotent. If every element of $R$ is almost clean, then $R$ is said to be an almost clean ring.  He established that for a Tychonoff space $X$, $C(X)$ is an almost clean ring if and only if it is clean. R. Mohamadian later characterized almost clean elements in $C(X)$ (See \cite{M2022}) and established the same result ($C(X)$ is an almost clean ring if and only if it is clean) using the characterization of almost clean elements. It is clear that if $\mathcal{P}$ contains all singleton subsets of $X$, then each element in $C(X)_\mathcal{P}$ is either a unit or a zero divisor. Hence in this case, $C(X)_\mathcal{P}$ is almost clean if and only if it is clean.

	 \begin{theorem} \label{t2.11}
	 	An element $f\in C(X)_\mathcal{P}$ is an almost clean element if and only if there exists a $\tau \mathcal{P}$-clopen subset $U$ of $X$ such that \[int_{X_\mathcal{P}}(f^{-1}(\{1\})) \subseteq U\subseteq cl_{X_\mathcal{P}}(coz(f)). \]
	 \end{theorem}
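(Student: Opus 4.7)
The plan is to use Observation \ref{o2.3} to translate between $\tau\mathcal{P}$-clopen subsets $U$ of $X$ and idempotents $e\in C(X)_\mathcal{P}$ with $U=Z_\mathcal{P}(e)$, and Lemma \ref{l0} to convert regularity of a nonzero $r\in C(X)_\mathcal{P}$ into the topological condition $int_{X_\mathcal{P}}(Z_\mathcal{P}(r))=\emptyset$. The crucial structural fact is that an idempotent $e$ is $0$-$1$ valued, so $e\equiv 0$ on $U$ while $e\equiv 1$ on $X\setminus U$; this lets me split every argument cleanly along the clopen partition $X=U\sqcup(X\setminus U)$ in $X_\mathcal{P}$.

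For the forward direction, I would begin with a decomposition $f=r+e$ with $r$ regular and $e$ idempotent, and take $U:=Z_\mathcal{P}(e)$. To show $int_{X_\mathcal{P}}(f^{-1}(\{1\}))\subseteq U$, suppose for contradiction there is $x\in int_{X_\mathcal{P}}(f^{-1}(\{1\}))\setminus U$ with an open neighborhood $V\ni x$ in $X_\mathcal{P}$ contained in $f^{-1}(\{1\})$; then on the non-empty open set $V\cap(X\setminus U)$, $e\equiv 1$ and $f\equiv 1$, forcing $r\equiv 0$ there and contradicting Lemma \ref{l0}. For $U\subseteq cl_{X_\mathcal{P}}(coz(f))$, if some $x\in U$ had an open neighborhood $W$ in $X_\mathcal{P}$ disjoint from $coz(f)$, then $W\cap U$ would be a non-empty open set on which $e\equiv 0$ and $f\equiv 0$, so $r\equiv 0$, again contradicting Lemma \ref{l0}.

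For the converse, given such $U$, let $e$ be the idempotent supplied by Observation \ref{o2.3} with $Z_\mathcal{P}(e)=U$, and set $r:=f-e$. I would show $int_{X_\mathcal{P}}(Z_\mathcal{P}(r))=\emptyset$ by contradiction: any non-empty open $V\subseteq Z_\mathcal{P}(r)$ in $X_\mathcal{P}$ splits along the clopen pair, so one of $V\cap U$, $V\cap(X\setminus U)$ is non-empty. In the first case $r=f$ on $V\cap U$, so $V\cap U$ is a non-empty open subset of $Z_\mathcal{P}(f)$ contained in $U\subseteq cl_{X_\mathcal{P}}(coz(f))$, a contradiction. In the second case $r=f-\boldsymbol{1}$ vanishes on $V\cap(X\setminus U)$, so this open set lies in $f^{-1}(\{1\})$ and hence in $int_{X_\mathcal{P}}(f^{-1}(\{1\}))\subseteq U$, contradicting $V\cap(X\setminus U)\subseteq X\setminus U$. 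The main obstacle is the degenerate case $r=\boldsymbol{0}$, i.e., $f=e$, where $r$ fails to be regular only because it is zero; I would handle this by switching to the alternative decomposition $f=(2e-\boldsymbol{1})+(\boldsymbol{1}-e)$, in which $\boldsymbol{1}-e$ is an idempotent and $2e-\boldsymbol{1}$ takes only the values $\pm 1$, hence is a unit in $C(X)_\mathcal{P}$ and therefore regular.
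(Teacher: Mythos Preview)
Your proof is correct and follows essentially the same strategy as the paper: take $U=Z_\mathcal{P}(e)$ for the forward direction and $r:=f-e$ for the converse, then use Lemma~\ref{l0} to test regularity via $int_{X_\mathcal{P}}(Z_\mathcal{P}(r))=\emptyset$.

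The forward direction matches the paper almost verbatim. For the converse, the paper takes a more roundabout route: it names the sets $A=Z_\mathcal{P}(e)\cap Z_\mathcal{P}(f)$ and $B=f^{-1}(\{1\})\cap coz(e)$, exhibits $A\cup B$ as $Z_\mathcal{P}(h)$ for an auxiliary $h=(e+|f|)(|\boldsymbol{1}-f|+(\boldsymbol{1}-e))$, and then defines $r$ by a four-part piecewise formula before observing $Z_\mathcal{P}(r)=A\cup B$ and that each of $A,B$ is closed with empty interior in $X_\mathcal{P}$. Since in fact $f-e$ already vanishes on $A\cup B$, that piecewise construction just reproduces $r=f-e$; your direct case-split along the clopen partition $U\sqcup(X\setminus U)$ reaches the same conclusion more transparently and avoids the auxiliary $h$ entirely.

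One remark: your separate treatment of the degenerate case $r=\boldsymbol{0}$ is harmless but unnecessary. Your own argument already shows $int_{X_\mathcal{P}}(Z_\mathcal{P}(r))=\emptyset$, so for $X\neq\emptyset$ it forces $Z_\mathcal{P}(r)\neq X$ and hence $r\neq\boldsymbol{0}$. (Concretely, if $f$ were itself an idempotent, the hypotheses pin down $U=coz(f)$, so the idempotent $e$ with $Z_\mathcal{P}(e)=U$ is $\boldsymbol{1}-f$, and $r=f-e=2f-\boldsymbol{1}$ is a unit.)
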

	\begin{proof}
		Let $f\in C(X)_\mathcal{P}$ be an almost clean element. So there exist a regular element $r\in C(X)_\mathcal{P}$ and an idempotent $e\in C(X)_\mathcal{P}$ such that $f=r+e$. Thus $Z_\mathcal{P}(f)\cap Z_\mathcal{P}(e)\subseteq Z_\mathcal{P}(r)$, where $int_{X_\mathcal{P}}(Z_\mathcal{P}(r))=\emptyset$ (by Lemma \ref{l0}). This shows that $int_{X_\mathcal{P}}(Z_\mathcal{P}(f))\cap Z_\mathcal{P}(e)=\emptyset$. Using this we get $Z_\mathcal{P}(e)\subseteq cl_{X_\mathcal{P}}(coz(f))$. Again since $f=r+e$ and $int_{X_\mathcal{P}}(Z_\mathcal{P}(r))=\emptyset$ (by Lemma \ref{l0}), we have $int_{X_\mathcal{P}}(f^{-1}(\{1\}))\cap coz(e)=\emptyset$. This shows that $int_{X_\mathcal{P}}(f^{-1}(\{1\})) \subseteq Z_\mathcal{P}(e)$.
		
		Conversely, let there exists a $\tau \mathcal{P}$-clopen subset $U$ of $X$ such that \\ $int_{X_\mathcal{P}}(f^{-1}(\{1\})) \subseteq U\subseteq cl_{X_\mathcal{P}}(coz(f))$. By Observation \ref{o2.3}, there exists an idempotent $e\in C(X)_\mathcal{P}$ such that $U=Z_\mathcal{P}(e)=coz(\boldsymbol{1}-e)$. Let us suppose $A=Z_\mathcal{P}(e)\cap Z_\mathcal{P}(f)$ and $B=f^{-1}(\{1\})\cap coz(e)$. Then $A\cup B=Z_\mathcal{P}(h)$, where $h=(e+|f|)(|\boldsymbol{1}-f|+(1-e))\in C(X)_\mathcal{P}$. Define $r\colon X\longrightarrow \mathbb{R}$ by \[r(x)=\begin{cases}
			0 &\text{, if }x\in Z(h|_{X\setminus \overline{D_h} }) \\ 
			f(x)-e(x) &\text{, if }x\in cl_{ X\setminus \overline{D_h}}(coz(h)) \\
			0 &\text{, if }x\in Z_\mathcal{P}(h|_{\overline{D_h} }) \\ 
			f(x)-e(x) &\text{, otherwise }
		\end{cases}. \] Then $r|_{X\setminus \overline{D_h}}\in C(X\setminus \overline{D_h})$. So $D_r\subseteq \overline{D_h}$ and thus $r\in C(X)_\mathcal{P}$. It is to note that $A$ and $B$ are disjoint closed sets in $X_\mathcal{P}$ such that $int_{X_\mathcal{P}}A=\emptyset$ and $int_{X_\mathcal{P}}B=\emptyset$. Thus we have $int_{X_\mathcal{P}}(A\cup B)=\emptyset$. But $A\cup B=Z_\mathcal{P}(r)$. It follows from Lemma \ref{l0} that $r$ is a regular element and $f=r+e$ is evident.
	\end{proof}
	We have now successfully characterized almost clean elements of the ring $C(X)_\mathcal{P}$. Using this we achieve the following result.
	
	\begin{theorem} \label{t2.12}
		$C(X)_\mathcal{P}$ is clean if and only if it is almost clean.
	\end{theorem}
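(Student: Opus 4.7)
The plan is to handle the two directions separately. The direction from clean to almost clean is immediate: if $f = u + e$ with $u$ a unit and $e$ an idempotent, then $u$ is in particular a regular element (a unit in any commutative ring cannot be a zero divisor), so the same decomposition witnesses $f$ as almost clean. The substantive direction is the converse: assuming every element of $C(X)_\mathcal{P}$ is almost clean, I will show $C(X)_\mathcal{P}$ is clean by invoking Theorem~\ref{t2.7}, which reduces the task to verifying that $X$ is $\tau\mathcal{P}$-zero-dimensional.

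So let $A, B \subseteq X$ be $\mathcal{P}$-completely separated. By Lemma~\ref{l2}, these sets sit inside disjoint zero-set neighbourhoods in $X_\mathcal{P}$: there exist $f_1, f_2 \in C(X)_\mathcal{P}$ with $A \subseteq int_{X_\mathcal{P}} Z_\mathcal{P}(f_1)$, $B \subseteq int_{X_\mathcal{P}} Z_\mathcal{P}(f_2)$ and $Z_\mathcal{P}(f_1) \cap Z_\mathcal{P}(f_2) = \emptyset$. I would then define
\[ h = \frac{|f_2|}{|f_1| + |f_2|}, \]
noting that the denominator is strictly positive because the two zero sets are disjoint, and that $\overline{D_h} \subseteq \overline{D_{f_1}} \cup \overline{D_{f_2}} \in \mathcal{P}$, so $h \in C(X)_\mathcal{P}$. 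By construction, $h \equiv 1$ on $Z_\mathcal{P}(f_1)$ and $h \equiv 0$ on $Z_\mathcal{P}(f_2)$.

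By hypothesis $h$ is almost clean, so Theorem~\ref{t2.11} yields a $\tau\mathcal{P}$-clopen set $U$ with $int_{X_\mathcal{P}}(h^{-1}(\{1\})) \subseteq U \subseteq cl_{X_\mathcal{P}}(coz(h))$. To finish, I would verify $A \subseteq U \subseteq X \setminus B$. The left inclusion follows because $int_{X_\mathcal{P}} Z_\mathcal{P}(f_1)$ is an open set of $X_\mathcal{P}$ contained in $h^{-1}(\{1\})$, hence contained in $int_{X_\mathcal{P}}(h^{-1}(\{1\})) \subseteq U$, while $A \subseteq int_{X_\mathcal{P}} Z_\mathcal{P}(f_1)$. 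The right inclusion follows because $int_{X_\mathcal{P}} Z_\mathcal{P}(f_2)$ is open and lies in $Z_\mathcal{P}(h)$, hence is disjoint from $coz(h)$ and therefore also from $cl_{X_\mathcal{P}}(coz(h)) \supseteq U$, while $B \subseteq int_{X_\mathcal{P}} Z_\mathcal{P}(f_2)$. The main obstacle I anticipate is precisely the mismatch between the sandwich produced by Theorem~\ref{t2.11} (with an interior on the left and a closure on the right) and the sharper sandwich required by Theorems~\ref{t2.4} and \ref{t2.7}; the crucial move is to use Lemma~\ref{l2} to buy open breathing room around both $A$ and $B$, which is what allows the interior and closure to be absorbed harmlessly on the way back to a bona fide $\tau\mathcal{P}$-clopen separation.
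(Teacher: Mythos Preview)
Your proof is correct and shares the paper's core mechanism: apply the almost-clean characterization (Theorem~\ref{t2.11}) to an auxiliary function, then absorb the unwanted interior on the left and closure on the right by arranging open ``breathing room'' around the two sets in advance. The packaging differs slightly. The paper works element-by-element: given $f$, it sets $A=\{f\ge \tfrac{2}{3}\}$ and $B=\{f\le \tfrac{1}{3}\}$, separates them by some $g$, applies Theorem~\ref{t2.11} to $g$, and then uses the gap between the $\tfrac{1}{3}$ and $\tfrac{2}{3}$ levels to recover a clopen set trapping $f^{-1}(\{1\})$ away from $Z_\mathcal{P}(f)$. You instead pass through Theorem~\ref{t2.7}: starting from an arbitrary $\mathcal{P}$-completely separated pair $A,B$, you invoke Lemma~\ref{l2} to obtain disjoint zero-set \emph{neighbourhoods}, and that open room is what lets the interior and closure in Theorem~\ref{t2.11} be absorbed. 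Your route is arguably cleaner, since the reduction to $\tau\mathcal{P}$-zero-dimensionality makes the role of the breathing room completely transparent, and Lemma~\ref{l2} does the work that the paper's $\tfrac{1}{3},\tfrac{2}{3}$ trick does implicitly.
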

	\begin{proof}
		Let us assume that $C(X)_\mathcal{P}$ is an almost clean ring and $f\in C(X)_\mathcal{P}$. Let $A=\{x\in X\colon f(x)\geq \frac{2}{3} \}$ and $B=\{x\in X\colon f(x)\leq \frac{1}{3} \}$. Then there exists $g\in C(X)_\mathcal{P}$ such that $g(A)=\{0\}$ and $g(B)=\{1 \}$. By Theorem \ref{t2.11} and Observation \ref{o2.3}, there exists $e\in C(X)_\mathcal{P}$ such that $int_{X_\mathcal{P}}(g^{-1}(\{1\})) \subseteq Z_\mathcal{P}(e)\subseteq cl_{X_\mathcal{P}}(coz(g))$. This ensures that $int_{X_\mathcal{P}}(f^{-1}(\{1\})) \subseteq f^{-1}(\{1\})\subseteq f^{-1}(\frac{2}{3},\infty) \subseteq A\subseteq Z_\mathcal{P}(g)$. Thus we have $int_{X_\mathcal{P}}(f^{-1}(\{1\}))\cap Z_\mathcal{P}(e)=\emptyset$ and $Z_\mathcal{P}(\boldsymbol{1}-e)\subseteq coz(f)$, that is, $int_{X_\mathcal{P}}(f^{-1}(\{1\}))\subseteq Z_\mathcal{P}(\boldsymbol{1}-e)\subseteq coz(f)$, where $\boldsymbol{1}-e$ is an idempotent. Thus $f$ is clean. 
		
		The rest is obvious.
	\end{proof}

	Thus in the ring $C(X)_\mathcal{P}$, the notions of a clean ring, a semiclean ring, a weakly clean ring, an almost clean ring and an exchange ring are  all coincide. Furthermore, $C(X)_\mathcal{P}$ is any of the aforementioned rings if and only if $(X,\tau, \mathcal{P})$ is $\tau \mathcal{P}$-zero-dimensional. 
	
	In particular, $\boldsymbol{C(\beta \mathbb{N})_\mathcal{P}}$ is a clean ring, a semiclean ring, a weakly clean ring, an almost clean ring and an exchange ring for any choice of ideal $\mathcal{P}$ of closed sets (See Example \ref{c2.2}).

	\section{Depth of ideals of $C(X)_\mathcal{P}$}
	
	Let $R$ be a commutative ring with unity and $M$ be an $R$-module. By a $M$-regular element $a$, we mean an $a\in R$ such that $am\neq 0$ for all $m\in M\setminus \{0\}$. A sequence $a_1,a_2,\cdots, a_n\in R $ is called a $M$-regular sequence if $a_1$ is $M$-regular, $a_2$ is $\dfrac{M}{a_1M}$-regular, $a_3$ is $\dfrac{M}{a_1M+a_2M}$-regular and so on; and $\displaystyle{\sum_{i=1}^{n}a_iM\neq M  }$. The length of a maximal $M$-regular sequence is called the depth of $M$. \cite{Eisenbud} 
	
	Thus, for an ideal $I$ of $C(X)_\mathcal{P}$, an element $f\in C(X)_\mathcal{P}$ is $I$-regular if and only if $Ann(f)\cap I=\emptyset$. Clearly each unit in $C(X)_\mathcal{P}$ is $I$-regular for any ideal $I$ of $C(X)_\mathcal{P}$. Also an element $f\in C(X)_\mathcal{P}$ is $\dfrac{I}{gI}$-regular if and only if $fh\notin gI$ for every $h\notin I$.
	
	In order to compute the depths of ideals of $C(X)_\mathcal{P}$, we first need the following lemma.
	
	\begin{lemma} \cite{AES2019} \label{l5.1}
		The following hold for a reduced ring $R$.
		\begin{enumerate}
			\item If $a,b\in R$, then $a$ is $M(b)$-regular if and only if $b\in P(a)$.
			\item If $I$ is a $z$-ideal of $R$, then $a\in R$ is $I$-regular if and only if $I\subseteq P(a)$.
		\end{enumerate}
	\end{lemma}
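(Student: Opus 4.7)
The plan is to reduce both parts to the identity $P(a) = \mathrm{Ann}(\mathrm{Ann}(a))$ supplied by Lemma \ref{l1.4}, together with the reduced-ring fact that $x^2 = 0$ forces $x = 0$. The recurring device is: for any ideal $J$ of $R$, one has $\mathrm{Ann}(a) \cdot J \subseteq \mathrm{Ann}(a) \cap J$, because a product $yj$ with $y \in \mathrm{Ann}(a)$ and $j \in J$ lies in $\mathrm{Ann}(a)$ (as $a(yj) = 0$) and in $J$; in a reduced ring the reverse containment is recovered ``up to squares'', since any $x \in \mathrm{Ann}(a) \cap J$ satisfies $x^2 \in \mathrm{Ann}(a) \cdot J$, so $\mathrm{Ann}(a) \cdot J = \{0\}$ forces $x = 0$. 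This yields the master chain $\mathrm{Ann}(a) \cap J = \{0\} \iff \mathrm{Ann}(a) \cdot J = \{0\} \iff J \subseteq \mathrm{Ann}(\mathrm{Ann}(a)) = P(a)$.

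For Part (1), the forward direction follows by taking $J = M(b)$: since $b \in M(b)$, for every $y \in \mathrm{Ann}(a)$ the element $yb$ sits in $\mathrm{Ann}(a) \cap M(b)$, which equals $\{0\}$ by $M(b)$-regularity, so $yb = 0$; hence $\mathrm{Ann}(a) \subseteq \mathrm{Ann}(b)$ and therefore $b \in P(a)$ by Lemma \ref{l1.4}. For the converse I would start from $b \in P(a)$ and upgrade it to $M(b) \subseteq P(a)$; once this is in hand, the equivalence above with $J = M(b)$ delivers $\mathrm{Ann}(a) \cap M(b) = \{0\}$, i.e., $a$ is $M(b)$-regular.

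The main obstacle is the upgrade $b \in P(a) \Rightarrow M(b) \subseteq P(a)$, which I would handle by proving that $P(a)$ is itself a $z$-ideal. By definition $P(a)$ is an intersection of minimal primes of $R$ containing $a$, and in any reduced ring each minimal prime $Q$ is a $z^0$-ideal: for $x \in Q$ there exists $z \notin Q$ with $xz = 0$, and then $\mathrm{Ann}(x) \subseteq \mathrm{Ann}(w)$ forces $wz = 0$, whence $w \in Q$ by primeness. Within the framework already set up in this paper, where $z^0$-ideals are $z$-ideals, this makes $P(a)$ a $z$-ideal, and the defining property of a $z$-ideal immediately gives $M(b) \subseteq P(a)$ whenever $b \in P(a)$.

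Part (2) is a direct specialisation of the master equivalence. If $I \subseteq P(a) = \mathrm{Ann}(\mathrm{Ann}(a))$, then $\mathrm{Ann}(a) \cdot I = \{0\}$, so every $x \in \mathrm{Ann}(a) \cap I$ satisfies $x^2 = 0$, hence $x = 0$ by reducedness; thus $a$ is $I$-regular. Conversely, if $a$ is $I$-regular then $\mathrm{Ann}(a) \cap I = \{0\}$, and for $y \in \mathrm{Ann}(a)$ and $i \in I$ the product $yi$ lies in $\mathrm{Ann}(a) \cap I = \{0\}$, so $\mathrm{Ann}(a) \cdot I = \{0\}$ and $I \subseteq P(a)$. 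Notably this argument does not invoke the $z$-ideal hypothesis on $I$; the hypothesis is presumably recorded to align the statement with the way the result is used in what follows.
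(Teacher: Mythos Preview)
The paper does not prove this lemma; it is quoted from \cite{AES2019} without argument, so there is no ``paper's proof'' to compare against. Your treatment of Part~(2) and of the forward direction of Part~(1) is correct, and your observation that the $z$-ideal hypothesis in~(2) is superfluous is right: the master chain $\mathrm{Ann}(a)\cap I=\{0\}\iff \mathrm{Ann}(a)\cdot I=\{0\}\iff I\subseteq\mathrm{Ann}(\mathrm{Ann}(a))=P(a)$ already settles it for any ideal $I$ of a reduced ring.

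The converse of Part~(1), however, has a genuine gap as written. You reduce it to showing that $P(a)$ is a $z$-ideal, arguing that minimal primes are $z^0$-ideals and that ``within the framework of this paper'' $z^0$-ideals are $z$-ideals. The paper establishes that last implication only for $C(X)_\mathcal{P}$, not for arbitrary reduced rings, and it is false in general: in a discrete valuation ring $R$ the ideal $(0)$ is a $z^0$-ideal but $M(0)=J(R)\neq(0)$, so $(0)$ is not a $z$-ideal. In fact Part~(1) itself fails in that example---take $a=b=0$: then $b\in P(a)=(0)$ while $0$ is not $M(0)$-regular. The source \cite{AES2019} works in $C(X)$, where the Jacobson radical vanishes; the hypothesis one actually needs on a reduced ring is $J(R)=\{0\}$ (which holds in $C(X)_\mathcal{P}$ via the fixed maximal ideals $M_x$). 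With that hypothesis your strategy completes directly, bypassing the $z^0$/$z$ detour: if $b\in P(a)$ and $c\in\mathrm{Ann}(a)\cap M(b)$, then $c\in\mathrm{Ann}(a)\subseteq\mathrm{Ann}(b)$ gives $cb=0$; for every maximal ideal $M$ either $b\in M$ (so $c\in M(b)\subseteq M$) or $b\notin M$ (so $cb=0$ forces $c\in M$), whence $c\in J(R)=\{0\}$.
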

	The next results follow directly from Lemmas \ref{l1.4}, \ref{l1.5} and \ref{l5.1}.
	\begin{theorem} \label{t5}
		Let $I$ be an ideal of $C(X)_\mathcal{P}$ and $f\in C(X)_\mathcal{P}$. Then the following hold. 
		\begin{enumerate}
			\item For $g\in C(X)_\mathcal{P}$, $f$ is $M(g)$-regular if and only if $int_{X_\mathcal{P}}Z_\mathcal{P}(f)\subseteq int_{X_\mathcal{P}}Z_\mathcal{P}$ if and only if $g\in P(f)$.
			\item $f$ is $I$-regular if and only if it is $I_z$-regular, where $I_z=Z_\mathcal{P}^{-1}(Z_\mathcal{P}[I])$ is a $z$-ideal of $C(X)_\mathcal{P}$.
			\item $f$ is $I$-regular if and only if $int_{X_\mathcal{P}}Z_\mathcal{P}(f)\subseteq \bigcap Z_\mathcal{P}[I]$ if and only if $I\subseteq  P(f)$.
		\end{enumerate}
	\end{theorem}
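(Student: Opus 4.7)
The plan is to derive all three assertions as direct corollaries of Lemma \ref{l1.4}, Lemma \ref{l1.5} and Lemma \ref{l5.1}, augmented by two elementary observations: that $I\subseteq I_z$ always, and that $P(f)$ is a $z^0$-ideal (being an intersection of the $z^0$-ideals that are the minimal primes of $C(X)_\mathcal{P}$) and hence a $z$-ideal, so that $I\subseteq P(f)$ if and only if $I_z\subseteq P(f)$.

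For part (1), I would simply chain the three lemmas: Lemma \ref{l5.1}(1), applied to the reduced ring $C(X)_\mathcal{P}$, gives $f$ is $M(g)$-regular iff $g\in P(f)$; Lemma \ref{l1.4} rewrites this as $Ann(f)\subseteq Ann(g)$; and Lemma \ref{l1.5} translates the annihilator containment into $int_{X_\mathcal{P}}Z_\mathcal{P}(f)\subseteq int_{X_\mathcal{P}}Z_\mathcal{P}(g)$.

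For part (2), the forward direction is immediate from $I\subseteq I_z$. For the converse, assume $f$ is $I$-regular and suppose for contradiction that $\boldsymbol{0}\neq h\in I_z$ with $fh=\boldsymbol{0}$. By the very definition $I_z=Z_\mathcal{P}^{-1}Z_\mathcal{P}[I]$, pick $h'\in I$ with $Z_\mathcal{P}(h')=Z_\mathcal{P}(h)$, hence $coz(h')=coz(h)$. Since $fh=\boldsymbol{0}$ amounts to $coz(h)\subseteq Z_\mathcal{P}(f)$, the same cozero-set condition holds for $h'$, forcing $fh'=\boldsymbol{0}$ while $h'\neq\boldsymbol{0}$, contradicting $I$-regularity of $f$. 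For part (3), I would first establish the equivalence with $I\subseteq P(f)$ by combining part (2) with Lemma \ref{l5.1}(2) applied to the $z$-ideal $I_z$, and then the fact that $P(f)$ is a $z$-ideal containing $I$ iff it contains the smallest $z$-ideal $I_z$ generated by $I$. For the topological characterisation, apply Lemmas \ref{l1.4} and \ref{l1.5} pointwise to get $I\subseteq P(f)$ iff $int_{X_\mathcal{P}}Z_\mathcal{P}(f)\subseteq int_{X_\mathcal{P}}Z_\mathcal{P}(g)$ for every $g\in I$, and then replace this family of inclusions by the single inclusion $int_{X_\mathcal{P}}Z_\mathcal{P}(f)\subseteq \bigcap Z_\mathcal{P}[I]$; the two are equivalent because $int_{X_\mathcal{P}}Z_\mathcal{P}(f)$ is open in $X_\mathcal{P}$, so any set containing it automatically contains it inside its interior.

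\textbf{Main obstacle.} The only step that is not a transparent citation is the converse in (2): one must recognise that passing from $I$ to its $z$-ideal hull does not introduce new annihilators, which comes down to the two small facts that each $h\in I_z$ admits an $h'\in I$ with the same cozero set, and that the condition $fh=\boldsymbol{0}$ depends only on $coz(h)$. All other steps reduce to routine application of Lemmas \ref{l1.4}, \ref{l1.5} and \ref{l5.1} and the openness of $int_{X_\mathcal{P}}Z_\mathcal{P}(f)$.
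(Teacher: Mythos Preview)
Your proposal is correct and follows exactly the route the paper indicates: the paper's entire proof is the single sentence ``The next results follow directly from Lemmas \ref{l1.4}, \ref{l1.5} and \ref{l5.1},'' and you have simply (and correctly) filled in the details of how these lemmas combine. The auxiliary facts you invoke---that $P(f)$ is a $z$-ideal (since minimal primes are $z^0$-ideals and intersections of $z^0$-ideals are $z^0$-ideals, as the paper establishes before Definition~\ref{d1.8}) and that membership in $I_z$ guarantees a witness in $I$ with identical zero set---are all available in the paper, so nothing is missing.
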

	It is clear from above that for each $f\in C(X)_\mathcal{P}$, $f$ is $<f>$-regular, $M(f)$-regular and $P(f)$-regular. Also, for $f\in C(X)_\mathcal{P}$,  the collection $\mathcal{R}_f=\{I\colon I\text{ is an ideal of }C(X)_\mathcal{P} \text{ such that }f\text{ is }I\text{-regular} \}$ has a largest element, viz, $P(f)$. Further, we have the following corollary using Theorem \ref{t5} and Lemma \ref{l1.4}.
	\begin{corollary}
		For any $g\in C(X)_\mathcal{P}$, the collection of $M(g)$-regular functions coincide with that of $P(g)$-regular functions. That is, $f\in C(X)_\mathcal{P}$ is $M(g)$-regular if and only  if it is $P(g)$-regular.
	\end{corollary}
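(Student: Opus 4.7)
The plan is to combine Theorem \ref{t5} with Lemma \ref{l1.4} to reduce both regularity conditions to containments between $P(\cdot)$-ideals, and then use transitivity of the annihilator containment. Specifically, I would translate both notions into statements about minimal prime closures of elements: by Theorem \ref{t5}(1), $f$ is $M(g)$-regular if and only if $g\in P(f)$; and by Theorem \ref{t5}(3) applied with $I=P(g)$, $f$ is $P(g)$-regular if and only if $P(g)\subseteq P(f)$. The task thus reduces to proving the equivalence
\[
g\in P(f)\iff P(g)\subseteq P(f).
\]

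For the direction $(\Leftarrow)$, I would simply note that $g\in P(g)$ (since $g$ lies in every minimal prime that contains it), so $P(g)\subseteq P(f)$ immediately forces $g\in P(f)$. This step is essentially free.

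For the direction $(\Rightarrow)$, I would invoke Lemma \ref{l1.4} to rewrite $P(h)=\{b\in C(X)_\mathcal{P}\colon \mathrm{Ann}(h)\subseteq \mathrm{Ann}(b)\}$. Assuming $g\in P(f)$, this gives $\mathrm{Ann}(f)\subseteq \mathrm{Ann}(g)$. For any $h\in P(g)$, Lemma \ref{l1.4} further yields $\mathrm{Ann}(g)\subseteq \mathrm{Ann}(h)$, and chaining these two inclusions gives $\mathrm{Ann}(f)\subseteq \mathrm{Ann}(h)$, i.e.\ $h\in P(f)$. Hence $P(g)\subseteq P(f)$, as required.

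The main obstacle, if any, is purely bookkeeping: one must be careful to apply Theorem \ref{t5}(3) to the ideal $I=P(g)$ and not confuse the roles of $f$ and $g$ in the asymmetric statement of Theorem \ref{t5}(1). Once the translation to annihilator containments is performed via Lemma \ref{l1.4}, transitivity of set inclusion finishes the argument. No new topological facts about $X_\mathcal{P}$ are needed, although one could equivalently phrase the proof using the characterization $\mathrm{int}_{X_\mathcal{P}}Z_\mathcal{P}(f)\subseteq \mathrm{int}_{X_\mathcal{P}}Z_\mathcal{P}(g)$ from Theorem \ref{t5}(1) together with $\bigcap Z_\mathcal{P}[P(g)]=\mathrm{int}_{X_\mathcal{P}}Z_\mathcal{P}(g)$, which follows from Lemma \ref{l1.5} and the fact that $Z_\mathcal{P}(g)$ itself arises from $g\in P(g)$.
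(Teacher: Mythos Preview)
Your proposal is correct and follows essentially the same approach the paper indicates: the corollary is stated as an immediate consequence of Theorem \ref{t5} and Lemma \ref{l1.4}, and your argument unpacks precisely that, translating both regularity conditions into $g\in P(f)$ and $P(g)\subseteq P(f)$ via Theorem \ref{t5}(1) and (3), and then bridging them using the annihilator description of $P(\cdot)$ from Lemma \ref{l1.4}.
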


	A characterization of an essential ideal $I$ of $C(X)_\mathcal{P}$ can be given using regular and $I$-regular elements of the ring as follows.
	
	\begin{theorem} \label{t5.1.0}
		An ideal $I$ of $C(X)_\mathcal{P}$ is essential if and only if $r_I(X)=r(X)$, where $r_I(X)$ (resp. $r(X)$) is the set of $I$-regular (resp. regular) elements in $C(X)_\mathcal{P}$.
	\end{theorem}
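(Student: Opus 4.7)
The plan is to translate all three ingredients into topological statements on $X_\mathcal{P}$ and then read off the equivalence. By Lemma \ref{l0}, a function $f \neq \boldsymbol{0}$ is regular exactly when $int_{X_\mathcal{P}}\, Z_\mathcal{P}(f) = \emptyset$; by Theorem \ref{t5}(3), $f$ is $I$-regular exactly when $int_{X_\mathcal{P}}\, Z_\mathcal{P}(f) \subseteq \bigcap Z_\mathcal{P}[I]$; and by Theorem \ref{t3}, $I$ is essential exactly when $int_{X_\mathcal{P}} \bigcap Z_\mathcal{P}[I] = \emptyset$. The inclusion $r(X) \subseteq r_I(X)$ is automatic, since a regular element has trivial annihilator and so meets every ideal only at $\boldsymbol{0}$. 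Hence the statement reduces to controlling the reverse inclusion.

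For the sufficiency, assume $I$ is essential and let $f \in r_I(X)$. Then $int_{X_\mathcal{P}}\, Z_\mathcal{P}(f)$ is an open subset of $\bigcap Z_\mathcal{P}[I]$, hence contained in $int_{X_\mathcal{P}} \bigcap Z_\mathcal{P}[I]$, which is empty by Theorem \ref{t3}. Lemma \ref{l0} then forces $f$ to be regular, yielding $r_I(X) \subseteq r(X)$ and thus equality.

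For the necessity I would argue the contrapositive. Assume $I \neq \{\boldsymbol{0}\}$ is not essential; by Theorem \ref{t3} there exists $x_0 \in int_{X_\mathcal{P}} \bigcap Z_\mathcal{P}[I]$, and because $I \neq \{\boldsymbol{0}\}$ this open set is a proper subset of $X$. Applying Lemma \ref{l1} inside this neighbourhood of $x_0$ produces a nonzero $f \in C(X)_\mathcal{P}$ with
\[
x_0 \in int_{X_\mathcal{P}}\, Z_\mathcal{P}(f) \subseteq Z_\mathcal{P}(f) \subseteq \bigcap Z_\mathcal{P}[I].
\]
By Theorem \ref{t5}(3) this $f$ is $I$-regular, while $int_{X_\mathcal{P}}\, Z_\mathcal{P}(f) \neq \emptyset$ together with Lemma \ref{l0} shows that $f$ is a zero divisor, hence not in $r(X)$. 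Thus $f \in r_I(X) \setminus r(X)$, contradicting $r_I(X) = r(X)$.

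The only mildly subtle step is in the necessity direction, where one must exhibit an $I$-regular element that is simultaneously a zero divisor; Lemma \ref{l1} is precisely the tool that promotes the point $x_0$ sitting inside the open set $int_{X_\mathcal{P}} \bigcap Z_\mathcal{P}[I]$ to a zero-set neighbourhood entirely contained in $\bigcap Z_\mathcal{P}[I]$, after which the topological dictionary delivers the conclusion with no further calculation.
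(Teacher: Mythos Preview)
Your proof is correct and follows essentially the same route as the paper's. The converse direction is identical (Theorem~\ref{t3} to locate $x_0$, Lemma~\ref{l1} to produce the zero-set neighbourhood, then Theorem~\ref{t5}(3) and Lemma~\ref{l0} to conclude); for the forward direction the paper argues directly from the algebraic characterization $f\in r_I(X)\iff I\cap Ann(f)=\{\boldsymbol{0}\}$ together with Lemma~\ref{l3.0}, whereas you translate everything into the topological picture first, but this is only a cosmetic difference.
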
 
	\begin{proof}
	 	It is clear that $f\in r_I(X)$ if and only if $I\cap Ann(f)=\{\boldsymbol{0} \}$. So $I$ is an essential ideal if $Ann(f)$ is a non-trivial ideal. Conversely, let $I$ be a non-essential ideal of $C(X)_\mathcal{P}$. Then by Theorem \ref{t3}, there exists $x_0\in int_{X_\mathcal{P}} \bigcap Z_\mathcal{P}[E]$. Following the steps in the proof of Lemma \ref{l1}, there exists $g\in C(X)_\mathcal{P}$ such that $x_0\in int_{X_\mathcal{P}}Z_\mathcal{P}(g)\subseteq Z_\mathcal{P}(g)\subseteq int_{X_\mathcal{P}} \bigcap Z_\mathcal{P}[E]$ and $g(X\setminus int_{X_\mathcal{P}} \bigcap Z_\mathcal{P}[E])=\{\boldsymbol{1} \}$. Thus it follows from Theorems \ref{t5}(3) and Lemma \ref{l0} that $g\in r_I(X)\setminus r(X)$.	
	\end{proof}
	The following theorem gives the depth of certain ideals of $C(X)_\mathcal{P}$.
	\begin{theorem} \label{t5.1}
		The following statements hold for a $\tau \mathcal{P}$-space $(X,\tau , \mathcal{P})$.
		\begin{enumerate}
			\item The depth of each ideal of $C(X)_\mathcal{P}$ is zero if and only if $X$ is a $\mathcal{P}P$-space.
			\item The depth of each essential ideal  of $C(X)_\mathcal{P}$ is zero if and only if $X$ is an almost $\mathcal{P}P$-space.
		\end{enumerate}
	\end{theorem}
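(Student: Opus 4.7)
The plan is to combine the existing characterizations of $\mathcal{P}P$-spaces (as those for which $C(X)_\mathcal{P}$ is Von-Neumann regular) and almost $\mathcal{P}P$-spaces (as those for which $C(X)_\mathcal{P}$ is classical, every non-unit being a zero-divisor) with Lemma \ref{l3.0} on essential ideals. The guiding observation is that depth zero for an ideal $I$ amounts to: no $f\in C(X)_\mathcal{P}$ is simultaneously $I$-regular and satisfies $fI\neq I$.

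For $(1)$, assume $X$ is $\mathcal{P}P$, fix an ideal $I$ and an $I$-regular element $f$, and show $fI=I$. Writing $f=f^2g$ and setting $e=fg$ yields an idempotent with $<f>=<e>$ and $Ann(f)=<\boldsymbol{1}-e>$. $I$-regularity forces $(\boldsymbol{1}-e)i\in Ann(f)\cap I=\{\boldsymbol{0}\}$ for every $i\in I$, so $i=ei=f(gi)$ with $gi\in I$, whence $I\subseteq fI$. For the converse, assume $X$ is not $\mathcal{P}P$ and pick $f\notin <f^2>$. A reduced-ring check shows $Ann(f)\cap <f^2>=\{\boldsymbol{0}\}$ (any $gf^2$ with $gf^3=\boldsymbol{0}$ forces $gf=\boldsymbol{0}$, hence $gf^2=\boldsymbol{0}$) and $<f^3>\neq <f^2>$ (otherwise $f^2(\boldsymbol{1}-gf)=\boldsymbol{0}$ implies $f(\boldsymbol{1}-gf)=\boldsymbol{0}$, placing $f\in <f^2>$). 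Hence $\{f\}$ is a regular sequence in $<f^2>$, so the depth of $<f^2>$ is at least one.

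For $(2)$, if $X$ is almost $\mathcal{P}P$ and $I$ is essential, then for any $f\in C(X)_\mathcal{P}$: units trivially give $fI=I$, while every non-unit is a zero-divisor, so $Ann(f)\neq\{\boldsymbol{0}\}$, and Lemma \ref{l3.0} (equivalently Theorem \ref{t5.1.0}) yields $I\cap Ann(f)\neq\{\boldsymbol{0}\}$, destroying $I$-regularity. Conversely, if $X$ is not almost $\mathcal{P}P$, pick a non-unit, non-zero-divisor $f$; then $Ann(<f>)=Ann(f)=\{\boldsymbol{0}\}$ makes $<f>$ essential, $f$ is $<f>$-regular, and $f<f>=<f^2>\neq <f>$ (otherwise $f$ is Von-Neumann regular and, being a non-zero-divisor, a unit), so the depth of $<f>$ is at least one.

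The main technical step I expect to need care on is the reduced-ring implication $f^2(\boldsymbol{1}-gf)=\boldsymbol{0}\Rightarrow f(\boldsymbol{1}-gf)=\boldsymbol{0}$, which is the pivot of both converse arguments: squaring $f(\boldsymbol{1}-gf)$ gives $f^2(\boldsymbol{1}-gf)^2=\boldsymbol{0}$ and reducedness of $C(X)_\mathcal{P}$ closes the step. The other delicate point is confirming that the multiplier $gi$ appearing in the $\mathcal{P}P$-direction of $(1)$ lies in $I$, which holds simply because $I$ is closed under multiplication by elements of $C(X)_\mathcal{P}$. Once these are pinned, the statement reduces to the already-developed machinery.
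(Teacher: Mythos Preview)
Your proof is correct. For part~(2) your argument is essentially identical to the paper's (the paper also invokes $r_I(X)=r(X)$ for essential $I$ and, in the converse, uses the principal ideal $\langle f\rangle$ generated by a non-unit regular element). For part~(1), however, you take a genuinely different route. In the forward direction the paper appeals to the topological characterization of Von-Neumann regular elements (Lemma~\ref{l01}) and, for each $i\in I$, explicitly constructs a function $h_i$ with $i=fh_i$ by dividing on $coz(f)$; your argument instead extracts the idempotent $e=fg$ from $f=f^2g$ and uses $Ann(f)=\langle \boldsymbol{1}-e\rangle$ together with $I$-regularity to force $i=ei=f(gi)$, which is a clean purely algebraic argument valid in any reduced Von-Neumann regular ring. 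In the converse the paper works with $M(f)$ (using that $f$ is always $M(f)$-regular and that $fM(f)=M(f)$ yields a cozero-set description of $Z_\mathcal{P}(f)$), whereas you exhibit $\langle f^2\rangle$ directly as an ideal of positive depth via the reduced-ring implications $gf^3=\boldsymbol{0}\Rightarrow gf^2=\boldsymbol{0}$ and $f^2(\boldsymbol{1}-gf)=\boldsymbol{0}\Rightarrow f(\boldsymbol{1}-gf)=\boldsymbol{0}$. Your approach is more self-contained and transports to arbitrary commutative reduced rings; the paper's approach stays closer to the function-ring machinery (zero sets, $M(f)$, Lemma~\ref{l01}) developed in the earlier sections.
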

	\begin{proof}
		\begin{enumerate}
			\item Let us assume first that $X$ is a $\mathcal{P}P$-space and let $I$ be an ideal of $C(X)_\mathcal{P}$. Then $I$ is a $z$-ideal (by Theorem \ref{l1.0}). Also let $f\in C(X)_\mathcal{P}$ be a non-unit element in $r_I(X)$. Then by Lemma \ref{l01}, $Z_\mathcal{P}(f)=coz_\mathcal{P}(g)$ for some $g\in C(X)_\mathcal{P}$. Define for each $i\in I$, $h_i(x)=\begin{cases}
				\dfrac{i(x)}{f(x)} &\text{, if }x\in Z_\mathcal{P}(g)\\ 0 &\text{, if }x\in Z_\mathcal{P}(f)
			\end{cases}.$ Then $h_i|_{Y}\in C(Y)$, where $Y=X\setminus (\overline{D_f}\cup \overline{D_g})$ is an open subset of $X$ and $\overline{D_f}\cup \overline{D_g}\in \mathcal{P}$. So $h_i\in C(X)_\mathcal{P}$ and $i=h_if$. Thus $I=fI$. Therefore $depth(I)=0$. Conversely let the depth of every ideal of $C(X)_\mathcal{P}$ be zero and let $f\in C(X)_\mathcal{P}$ be a non-unit. Clearly $f$ is $M(f)$-regular and $depth(M(f))=0$. Thus $fM(f)=M(f)$. So there exists $g\in M(f)$ ($\implies$ $Z_\mathcal{P}(f)\subseteq Z_\mathcal{P}(g)$) such that $fg=f$ which implies that $Z_\mathcal{P}(f)=coz(\boldsymbol{1}-g)$ and so by Lemma \ref{l01}, $f$ is Von-Neumann regular. Thus $X$ is a $\mathcal{P}P$-space. 
		\item Let $I$ be an essential ideal of $C(X)_\mathcal{P}$ for an almost $\mathcal{P}P$-space $X$. Then $r_I(X)=r(X)=U(X)$, where $U(X)$ is the collection of all units in $C(X)_\mathcal{P}$. Thus there is no non-unit $I$-regular element in $C(X)_\mathcal{P}$. Thus $depth(I)=0$. Conversely, let us assume that $X$ is not an almost $\mathcal{P}P$-space. Then there exists a non-unit zero divisor $f\in C(X)_\mathcal{P}$. Since $Ann(f)=\{\boldsymbol{0} \}$, $I=<f>$ is essential and so $r_I(X)=r(X)$. Thus $f$ is $I$-regular. If $depth(I)=0$, $fI=I$ which implies that there exists $g\in I$ such that $f=f^2g$ and hence $f(\boldsymbol{1}-fg)=\boldsymbol{0}$ which contradicts that $f$ is a non-unit zero divisor. Thus $depth(I)\neq 0$.
		\end{enumerate}
	\end{proof}

	Note that since $(X,\tau,\mathcal{P}_f)$ and $(X,\tau,\mathcal{P}_{nd})$ are almost $\mathcal{P}P$-spaces, depth of each essential ideal of $C(X)_F$ (or $T'(X)$) is zero (See Remark \ref{rem}). Furthermore, free ideals are always essential and so depth of each free ideal of $C(X)_F$ (or $T'(X)$) is zero.  In fact, if $\mathcal{P}$ is such that $C(X)_\mathcal{P}$ contains the characteristic functions $\chi_{\{{x}\}}$ for all $x\in X$, then the depth of each free ideal (and hence essential ideal) of $C(X)_\mathcal{P}$ is zero. Moreover, in this case, the depth of each maximal ideal of $C(X)_\mathcal{P}$ is also zero. In fact, we get something stronger, that is, depth of each prime ideal of $C(X)_\mathcal{P}$ is zero. These results are summarized as follows.

\begin{theorem} \label{o3}
	If $\mathcal{P}$ is such that $C(X)_\mathcal{P}$ contains the characteristic functions $\chi_{\{{x}\}}$ for all $x\in X$, then
	\begin{enumerate}
		\item the depth of each free ideal (and hence each essential ideal) of $C(X)_\mathcal{P}$ is zero. 
		\item the depth of each maximal ideal of $C(X)_\mathcal{P}$ is zero.
		\item the depth of each prime ideal of $C(X)_\mathcal{P}$ is zero.
	\end{enumerate}
\end{theorem}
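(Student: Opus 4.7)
The plan is to deduce (1) and (2) from (3) plus earlier machinery, and to prove (3) by splitting prime ideals into two cases via Corollary \ref{c3}.

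Under the hypothesis $\chi_{\{x\}} \in C(X)_\mathcal{P}$ for every $x \in X$, the paragraph following Lemma \ref{l0} shows that $(X,\tau,\mathcal{P})$ is an almost $\mathcal{P}P$-space, and Remark \ref{rem} shows that essential ideals and free ideals of $C(X)_\mathcal{P}$ coincide in this situation. Part (1) is therefore a direct consequence of Theorem \ref{t5.1}(2). Since every maximal ideal is prime, part (2) will follow from (3).

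To prove (3), fix a prime ideal $P$ of $C(X)_\mathcal{P}$. Corollary \ref{c3} gives two cases. If $P$ is essential, then Theorem \ref{t5.1}(2) already yields $\mathrm{depth}(P) = 0$. Otherwise, $P$ is a maximal ideal that is also a minimal prime, and the discussion preceding Corollary \ref{c3} identifies $P = M_{x_0} = \langle e \rangle$ with $e = \boldsymbol{1} - \chi_{\{x_0\}}$ an idempotent and $\{x_0\} = \bigcap Z_\mathcal{P}[P]$.

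In this remaining case the core step is to show that every $\langle e \rangle$-regular element $f \in C(X)_\mathcal{P}$ satisfies $f \langle e \rangle = \langle e \rangle$. First, $f$ must be nonzero on $X \setminus \{x_0\}$: if $f(x) = 0$ for some $x \neq x_0$, then $\chi_{\{x\}} = \chi_{\{x\}} e$ would be a nonzero element of $Ann(f) \cap \langle e \rangle$, contradicting regularity. Define $h(x) = 1/f(x)$ for $x \neq x_0$ and $h(x_0) = 0$. Using that $X$ is $T_1$ and that $\chi_{\{x_0\}} \in C(X)_\mathcal{P}$, one verifies $\overline{D_h} \subseteq \overline{D_f} \cup \{x_0\} \in \mathcal{P}$, so $h \in C(X)_\mathcal{P}$ (with a routine split on whether $x_0$ is $\tau$-isolated). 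Then $efh = e$, so $e \in f\langle e \rangle$ and $f \langle e \rangle = \langle e \rangle$ follows.

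The main obstacle is keeping the membership $h \in C(X)_\mathcal{P}$ clean across the subcases of whether $x_0$ is $\tau$-isolated, and making sure the characteristic functions $\chi_{\{x\}}$ are genuinely available as test functions to detect a zero of a purportedly $\langle e \rangle$-regular $f$.
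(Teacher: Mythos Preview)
Your proof is correct and follows essentially the same strategy as the paper: use the almost $\mathcal{P}P$-space machinery for the essential/free case, and for the remaining fixed case use the characteristic functions $\chi_{\{x\}}$ to force any $M_{x_0}$-regular $f$ to be nonzero off $x_0$, then produce an inverse to conclude $fM_{x_0}=M_{x_0}$.

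The only organizational difference is that the paper proves (2) first (splitting maximal ideals into free versus fixed) and then deduces (3) from (1), (2) and Corollary~\ref{c3}, whereas you prove (3) first via Corollary~\ref{c3} and read (2) off as a special case. Your treatment of the fixed case is in fact more explicit than the paper's: where the paper writes ``if $f(x_0)=0$, then $f\in M$ and hence $fM=M$'' without further justification, you actually construct the function $h$ with $efh=e$ and verify $\overline{D_h}\subseteq\overline{D_f}\cup\{x_0\}\in\mathcal{P}$ via the isolated/non-isolated split. That extra care is warranted and fills what is arguably a gap in the paper's own argument.
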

\begin{proof}
	\begin{enumerate}
		\item Follows from the above discussions.
		\item Let $M$ be a maximal ideal of $C(X)_\mathcal{P}$. If $M$ is a free ideal, $depth(M)=0$. For a fixed maximal ideal $M$, there exists $x_0\in X$ such that $M=M_{x_0}$ (see \cite{DABM}). Let $f\in C(X)_\mathcal{P}$ be $M$-regular. Then $fg\neq \boldsymbol{0}$ for any $g\in M\setminus \{\boldsymbol{0} \}$. Thus for each $x\in X\setminus \{x_0\}$, $\chi_{\{{x}\}}f\neq \boldsymbol{0}$ and thus $f(x)\neq 0$. Therefore $X\setminus \{x_0 \}\subseteq coz(f)$.
		\begin{enumerate}
			\item[Case 1.] If $f(x_0)=0$, then $f\in M$ and hence $fM=M$.
			\item[Case 2.] If $f(x_0)\neq0$, then $f$ is a unit and so $ fM=M$.
		\end{enumerate} Thus there does not exist any $M$-regular sequence and hence $depth(M)=0$.
		\item Follows from Observations \ref{o3} and Corollary \ref{c3}.
	\end{enumerate}
\end{proof}
	
	In particular, depth of each prime ideal of $C(X)_F$ (and $T'(X)$) is zero.

	\begin{remark}
		\begin{enumerate}
			\item It is important to note that the above observation may fail if $C(X)_\mathcal{P}$ fails to contain the characteristic functions $\chi_{\{{x}\}}$ for all $x\in X$. To see this let $X=[0,1]\cup \{2\}$ endowed with the subspace topology induced from the usual topology of $\mathbb{R}$ and $\mathcal{P}=\{\emptyset, \{2\}\}$. Then $X$ contains non-almost $P$-points (In a Tychonoff space $X$, a point $x\in X$ is said to be an almost $P$-point if every zero set $Z\in Z[X]$ containing $x$ has non-empty interior, i.e. $int_XZ\neq \emptyset$.) and $C(X)_\mathcal{P}=C(X)$. It follows from Corollary 3.9 \cite{AES2019} that the depth of each maximal ideal of $C(X)=C(X)_\mathcal{P}$ is $1$. Further, $C(X)_\mathcal{P}=C(X)$ contains free maximal ideals and so there exists a free ideal (which is also essential) of $C(X)_\mathcal{P}=C(X)$ having depth $1$. Indeed, there also exists a prime ideal of $C(X)_\mathcal{P}=C(X)$ having depth $1$.
			\item In light of Theorem \ref{o3}, we note that the depth of all ideals of $C(X)_\mathcal{P}$ may not be zero even if $C(X)_\mathcal{P}\supseteq \{\chi_{\{x \}}\colon x\in X \}$. For example, take  $X=\mathbb{R}$, the real line. Then  $C(X)_F$ is not a Von-Neumann Regular ring. Indeed, if $f(x)=sin$ $x$ for all $x\in X$, then there does not exist a $g\in C(X)_F$ such that $f=f^2g$. Thus by Theorem \ref{t5.1}, it follows that there exists an ideal of $C(X)_F$ having non-zero depth.
		\end{enumerate}
		 
	\end{remark}
	
	\bibliographystyle{plain}

\end{document}